\tikzset{anchorbase/.style={baseline={([yshift=-0.5ex]current bounding box.center)}}}
\tikzstyle directed=[postaction={decorate,decoration={markings,
    mark=at position #1 with {\arrow{>}}}}]
\tikzstyle rdirected=[postaction={decorate,decoration={markings,
    mark=at position #1 with {\arrow{<}}}}]
 \newlength{\baseunit}               
\newtheorem{theorem}[subsubsection]{Theorem}
\newtheorem{lemma}[theorem]{Lemma}
\newtheorem{prop}[theorem]{Proposition}
\newtheorem{corollary}[subsubsection]{Corollary}
\theoremstyle{definition}
\newtheorem{definition}[subsubsection]{Definition}
\newtheorem{remark}[theorem]{Remark}
\newtheorem{example}[subsubsection]{Example}
\newtheorem{thmA}{Theorem}
\newcommand{\PSh}{\mathsf{PSh}}
\newcommand{\Sh}{\mathsf{Sh}}
\newcommand{\HSh}{\mathsf{HSh}}
\newcommand{\fp}{\mathsf{fp}}
\newcommand{\Ind}{\mathsf{Ind}}
\newcommand{\free}{\mbox{-}{\mathsf{free}}}
\newcommand{\tto}{\twoheadrightarrow}
\newcommand{\Tens}{\mathcal{T}\hspace{-.5mm}ens}
\newcommand{\RMon}{\mathcal{R}\mathcal{M}\hspace{-.3mm}on}
\newcommand{\Ab}{\mathsf{Ab}}
\newcommand{\Top}{\mathcal{T}\hspace{-1mm}op}
\newcommand{\HTop}{\mathcal{H}\mathcal{T}\hspace{-1mm}op}
\newcommand{\Sw}{\mathcal{H}\vspace{-0.1mm}\mathcal{K}}
\newcommand{\Noy}{\mathsf{Noy}}
\newcommand{\charr}{\mathrm{char}}
\newcommand{\Mot}{\mathsf{Mot}}
\newcommand{\Dim}{\mathrm{Dim}}
\newcommand{\Sym}{\mathrm{Sym}}
\newcommand{\ABt}{\mathcal{A}\mathcal{B}_3}
\newcommand{\ABf}{\mathcal{A}\mathcal{B}_5}
\newcommand{\Cat}{\mathcal{C}at}
\newcommand{\Fr}{\mathrm{Fr}}
\newcommand{\Spec}{\mathrm{Spec}}
\newcommand{\Mod}{\mbox{-}\mathsf{Mod}}
\newcommand{\Rep}{\mathsf{Rep}}
\newcommand{\ba}{\mathbf{a}}
\newcommand{\bb}{\mathbf{b}}
\newcommand{\bc}{\mathbf{c}}
\newcommand{\bt}{\mathbf{t}}
\newcommand{\bT}{\mathbf{T}}
\newcommand{\bU}{\mathbf{U}}
\newcommand{\bV}{\mathbf{V}}
\newcommand{\bC}{\mathbf{C}}
\newcommand{\bB}{\mathbf{B}}
\newcommand{\bA}{\mathbf{A}}
\newcommand{\cX}{\mathcal{X}}
\newcommand{\cY}{\mathcal{Y}}
\newcommand{\cS}{\mathcal{S}}
\newcommand{\cT}{\mathcal{T}}
\newcommand{\HT}{\mathcal{H}\mathcal{T}}
\newcommand{\cR}{\mathcal{R}}
\newcommand{\cU}{\mathcal{U}}
\newcommand{\cM}{\mathcal{M}}
\newcommand{\cN}{\mathcal{N}}
\newcommand{\pre}{\mathrm{pre}}
\newcommand{\id}{\mathrm{id}}
\newcommand{\Vecc}{\mathsf{Vec}}
\newcommand{\coker}{\mathrm{coker}}
\newcommand{\Hom}{\mathrm{Hom}}
\newcommand{\End}{\mathrm{End}}
\newcommand{\Aut}{\mathrm{Aut}}
\newcommand{\ev}{\mathrm{ev}}
\newcommand{\co}{\mathrm{co}}
\newcommand{\Lan}{\mathrm{Lan}}
\newcommand{\Ob}{\mathrm{Ob}}
\newcommand{\im}{\mathrm{im}}
\newcommand{\op}{\mathrm{op}}
\newcommand{\OB}{\mathcal{OB}}
\newcommand{\EN}{\mathcal{EN}}
\newcommand{\MO}{\mathcal{MO}}
\newcommand{\Seq}{\mathcal{S}eq}
\newcommand{\mZ}{\mathbb{Z}}
\newcommand{\mQ}{\mathbb{Q}}
\newcommand{\mN}{\mathbb{N}}
\newcommand{\mC}{\mathbb{C}}
\newcommand{\mF}{\mathbb{F}}
\newcommand{\Yon}{\mathtt{Y}}
\newcommand{\tI}{\mathtt{I}}
\newcommand{\tS}{\mathtt{S}}
\newcommand{\tZ}{\mathtt{Z}}
\newcommand{\tHZ}{\mathtt{HZ}}
\newcommand{\tN}{\mathtt{N}}
\newcommand{\unit}{{\mathbf{1}}}
\newcommand{\Cone}{\mathrm{Cone}}
\newcommand{\Arr}{\mathrm{Arr}}
\begin{document}
\title[Homological kernels]{Homological kernels of monoidal functors}
\author{Kevin Coulembier}

\address{School of Mathematics and Statistics, University of Sydney, NSW 2006, Australia}
\email{kevin.coulembier@sydney.edu.au}


\subjclass[2010]{18D10, 18D15, 18F10, 18G55}

\keywords{universal tensor category, multi-representability, flat functor, homotopy category, abelian envelope}
\begin{abstract}
We show that each rigid monoidal category $\ba$ over a field defines a family of universal tensor categories, which together classify all faithful monoidal functors from $\ba$ to tensor categories.
Each of the universal tensor categories classifies monoidal functors of a given `homological kernel' and can be realised as a sheaf category, not necessarily on $\ba$. This yields a theory of `local abelian envelopes' which completes the notion of monoidal abelian envelopes.
\end{abstract}

\maketitle

\section*{Introduction}

\subsection*{Weak abelian envelopes}\label{weakenv}

For a field $k$, let $\ba$ be a small $k$-linear rigid monoidal category in which the endomorphism algebra of the tensor unit is $k$ (a rigid monoidal category over~$k$). If such a category is also abelian, it is called a {\em tensor category over $k$}, see \cite{Del90, EGNO}.
The 2-category of tensor categories and exact linear monoidal functors is denoted by $\Tens$.

A {\em weak abelian envelope} of $\ba$ is a faithful $k$-linear monoidal functor $\ba\to\bT$ to a tensor category $\bT$ which induces equivalences
$$\Tens(\bT,\bT')\;\xrightarrow{\sim}\;[\ba,\bT']^\otimes_{faith},$$
for all tensor categories $\bT'$ over $k$, where the right-hand side is the category of faithful linear monoidal functors. If $\ba\to\bT$ is also full, it is an {\em abelian envelope}.

Examples of abelian envelopes have led to some spectacular applications in recent years, see for instance \cite{BEO, CEH, Deligne, EHS, Ostrik}, and they proved to be a great source for new interesting tensor categories. This has prompted the developed of extensive theory in \cite{BEO, AbEnv, PreTop, CEOP} and references therein. The most natural question in the subject asks when a given category $\ba$ admits a (weak) abelian envelope. We refer to \cite{CEOP} for some partial and conjectural answers. Many rigid monoidal categories $\ba$ over $k$, such as most tensor-triangulated ones, do not admit any faithful monoidal functors to tensor categories. Clearly these do not admit (weak) abelian envelopes. 

The current work establishes that as soon as there exist such faithful monoidal functors, then $\ba$ always defines universal tensor categories, namely a non-empty set of {\em local abelian envelopes}, which together play the role of an envelope. In particular, this set has cardinality 1 if and only if $\ba$ admits a weak abelian envelope. In other words, while the question of whether $\ba$ admits a weak abelian envelope is equivalent to the question of whether the 2-functor $[\ba,-]^\otimes_{faith}$ from $\Tens$ to the 2-category of groupoids is representable, we demonstrate that it is always `multi-representable'.  This also greatly extends the potential of abelian envelopes as a source of tensor categories.



\subsection*{Multi-representability of 2-functors} We adapt some terminology from \cite{Di1} to the 2-categorical setting in a `1-dimensional' way.
Let $\cM$ be a 2-category and $F$ a 2-functor $\cM\to\Cat$ to the 2-category of categories. For a family $\{F_\gamma,\gamma\in \Gamma\}$ of such 2-functors, we can define the 2-functor $\coprod_{\gamma\in\Gamma} F_\gamma$ which sends $X\in \cM$ to the category $\coprod_{\gamma} F_\gamma(X)$. Here, $\coprod_\gamma\bC_\gamma$ for a family of categories $\{\bC_\gamma\}$ stands for the obvious category with class of objects given by $\sqcup_\gamma\Ob\bC_\gamma$. 

A $2$-functor $F$ is {\em multi-representable} if there exists a family $\{F_\gamma,\gamma\in \Gamma(F)\}$ of representable 2-functors with $F\simeq \coprod_\gamma F_\gamma$. Clearly, such a family, when it exists, is uniquely defined. In particular, $F$ is then representable if and only if $|\Gamma(F)|=1$. The representing objects $X_\gamma\in\cM$ of $F_\gamma$ are the {\em locally representing objects of $F$}.


As an intermediary result, we will demonstrate the following example of the above situation. Let $\cM$ be the 2-category of AB5 abelian categories and exact faithful cocontinuous functors. For any small additive category $\ba$, we have a 2-functor $F_{\ba}:\cM\to\Cat$ which sends an AB5 category $\bC$ to the category of additive functors $\ba\to\bC$ (we ignore that this lifts to a 2-functor to the 2-category of preadditive or even abelian categories). We prove that this 2-functor is multi-representable; note that it is not representable unless $\ba$ is the zero category. The locally representing objects are Grothendieck categories, and a subset of them comprise the sheaf categories on $\ba$ for all additive Grothendieck topologies on $\ba$. Hence, the set $\Gamma(F_{\ba})$ is an extension of the set of topologies on $\ba$. We call this generalised notion {\em homological Grothendieck topologies}, and they can be interpreted in several ways as ordinary Grothendieck topologies, for instance on $K^b\ba$ or on the category obtained from $\ba$ by freely adjoining kernels.


An inclusion $U:\cM\to\cN$ of a 2-full sub-2-category is {\em multi-reflective} if $\cN(A,U-):\cM\to\Cat$ is multi-representable for every $A\in \cN$. Following \cite{Di2}, we can then define $\Spec_UA$ as the set $\Gamma(\cN(A,U-))$ labelling the locally representing objects of $\cN(A,U-)$.


\subsection*{Main result} We return to the original set up. Let $\RMon$ be the 2-category of rigid monoidal categories over $k$ and {\em faithful} linear monoidal functors. Denote by $U$ the 2-fully faithful forgetful 2-functor $\Tens\to\RMon$.
For $\theta\in\RMon(\ba,U\bT)$, we define its {\em homological kernel} (we will see various alternative guises of this kernel, in particular it contains the information of a homological topology) as the kernel of $K^b\ba(-,\unit)\to \bT(-,\unit)$, where the homological functor $K^b\ba\to\bT$ is the one induced from $\theta$ by taking $H^0$ of the action of $\theta$ on chain complexes. It is clear that the 2-functor $\RMon(\ba,U-)$ decomposes into subfunctors labelled by these homological kernels, and our main result states that this decomposition corresponds to an instance of multi-representability.

\begin{thmA}
Consider $\ba\in\RMon$. 
\begin{enumerate}
\item The 2-functor $\RMon(\ba,U-):\Tens\to\Cat$ is multi-representable, that is (for some set $\Gamma$ and tensor categories $\bT_\gamma,\gamma\in\Gamma$)
$$\RMon(\ba,U-)\;\simeq\;\coprod_{\gamma\in\Gamma}\Tens(\bT_\gamma,-).$$
\item The set $\Gamma:=\Spec_U\ba$ can be identified with the set of subfunctors of $K^b\ba(-,\unit)$ which appear as homological kernels; and there exist no inclusions between the subfunctors in $\Gamma$. A functor $\theta\in \RMon(\ba,U\bT)$ factors via $\bT_\gamma$ if its homological kernel is $\gamma$.
\item The category $\ba$ admits a weak abelian envelope if and only if precisely one subfunctor of $K^b\ba(-,\unit)$ appears as a homological kernel (meaning $\Spec_U(\ba)$ is a singleton).
\item $\Tens$ is a multi-reflective sub-2-category of $\RMon$.
\end{enumerate}
\end{thmA}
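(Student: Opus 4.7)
The plan is to bootstrap from the intermediary additive result (multi-representability of $F_\ba$ on AB5 categories) to the rigid monoidal setting, in three stages. In the first stage I would unpack the notion of homological kernel: a faithful $\theta\in\RMon(\ba,U\bT)$ induces, via $H^0$ of its action on bounded chain complexes, a homological functor $K^b\ba\to\bT$, whose kernel $\gamma_\theta:=\ker(K^b\ba(-,\unit)\to\bT(-,\unit))$ is the datum that will label the locally representing tensor categories. I would verify that the subfunctors of $K^b\ba(-,\unit)$ arising this way form a set, and translate each into an additive (``homological'') Grothendieck topology on $K^b\ba$ or on the category obtained from $\ba$ by freely adjoining kernels, as suggested by the introduction.

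In the second stage I would construct $\bT_\gamma$ for each admissible $\gamma$. The intermediary result supplies a Grothendieck category $\cC_\gamma$ with a universal additive functor $y_\gamma:\ba\to\cC_\gamma$ whose homological kernel is $\gamma$. The monoidal upgrade is to descend the Day convolution from $\PSh(\ba)$ along the localisation $\PSh(\ba)\to\cC_\gamma$, which is possible precisely because $\gamma$ is the kernel of a monoidal functor and is therefore tensor-closed. I would then define $\bT_\gamma\subseteq\cC_\gamma$ as the smallest full subcategory containing the image of $y_\gamma$ and closed under subquotients and finite (co)limits, and check that rigidity of $\ba$ together with tensor-closure of $\gamma$ makes $\bT_\gamma$ into a tensor category over $k$ through which $y_\gamma$ factors as a faithful monoidal functor.

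In the third stage, given $\theta\in\RMon(\ba,U\bT)$ with homological kernel $\gamma$, I would embed $\bT$ into $\Ind\bT$ (a Grothendieck category with cocontinuous biclosed monoidal structure) and apply the additive universal property of $\cC_\gamma$ to obtain a unique exact cocontinuous functor $\Phi:\cC_\gamma\to\Ind\bT$ extending $\theta$. Monoidality of $\Phi$ would follow from the universal property of Day convolution, and its restriction to $\bT_\gamma$ would land in $\bT\subseteq\Ind\bT$ because the image of $y_\gamma$ does and $\bT_\gamma$ is generated from $y_\gamma(\ba)$ by operations preserving $\bT$ inside $\Ind\bT$; uniqueness would follow from the same generation statement. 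Parts~(3) and~(4) are then formal: (3) is the tautology that a weak abelian envelope is a representing object of $\RMon(\ba,U-)$, and (4) is the repackaging of (1) as multi-reflectivity of $U$ per \cite{Di2}. The antichain claim in~(2) would require an additional saturation argument, showing that every admissible $\gamma$ is maximal among the kernels of functors factoring through $\bT_\gamma$.

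The main obstacle will be the Stage-2 construction of $\bT_\gamma$ as a bona fide tensor category: showing that the subquotient closure of $y_\gamma(\ba)$ inside $\cC_\gamma$ is $\Hom$-finite, that rigid objects of $\ba$ map to rigid objects, and that this class is closed under tensor product. These points crucially invoke the hypotheses on $\ba$ (rigidity, $\End(\unit)=k$) and the tensor-compatibility of $\gamma$. A closely related subtlety in Stage~3 is ensuring that the extension $\Phi$ restricts to land in $\bT$ rather than merely in $\Ind\bT$, which again hinges on controlled rigid generation of $\bT_\gamma$ by the image of $\ba$.
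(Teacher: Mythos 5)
Your architecture (additive multi-representability via sheaf categories, a monoidal descent, recognition of a tensor subcategory, formal deduction of (3) and (4)) is the paper's, but Stage 2 as written contains a concrete error and Stages 1--2 omit the two ideas that make the theorem true. First, $\cC_\gamma$ is \emph{not} a localisation of $\PSh(\ba)$: the universal Grothendieck category attached to a homological kernel is a sheaf category on $\Noy\ba$ (the free kernel completion of $\ba$), equivalently on $K^b\ba$; localisations of $\PSh(\ba)$ only represent the \emph{prexact} functors out of $\ba$, and a general faithful monoidal $\theta$ is not prexact --- this is precisely why homological topologies are introduced. So there is no "Day convolution on $\PSh(\ba)$" to descend; the monoidal structure must first be transported to $\Noy\ba$ and $K^b\ba$, and the descent is then handled by Lemma~\ref{LemPrexMon} together with Proposition~\ref{PropMon1}. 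Second, "tensor-closure of $\gamma$" is not what makes $\bT_\gamma$ a tensor category. The mechanism (Lemma~\ref{LemCEOP}) is that the witness $\theta$ supplies a faithful exact cocontinuous monoidal comparison $\Sh(\Noy\ba,\cR)\to\Ind\bT$; faithfulness and exactness of this comparison force the tensor product on the sheaf category to be exact, whence kernels of morphisms between rigid objects are rigid, and \cite[Proposition~3.3.2]{CEOP} then identifies the \emph{entire} sheaf category as $\Ind$ of a tensor category. Without an actual faithful monoidal functor to a tensor category realising $\gamma$, your subquotient closure of $y_\gamma(\ba)$ has no reason to consist of rigid objects (most $\gamma$ are simply not admissible), and if $\bT_\gamma$ were a proper subcategory of the rigid part you would lose the equivalence $\Tens^{\uparrow}(\bT_\gamma,-)\simeq[\Ind\bT_\gamma,\Ind-]^{\otimes}_{cc}$ on which the universal property rests.

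Two further points you treat as routine are genuine theorems. Your Stage-1 "translation" of the subfunctor $\gamma\subset K^b\ba(-,\unit)$ into a homological topology is the content of Propositions~\ref{BigProp1} and~\ref{BigProp2}: rigidity of $\ba$ and monoidality of $\theta$ are needed so that the sieve on $\unit$ determines the whole kernel ideal of $\theta^{\mZ}_\Delta$, and one then needs that prexact monoidal functors from rigid categories to tensor categories have their Grothendieck topology determined by their kernel (Theorem~\ref{FlatMon}, resting on \cite[Theorem~4.4.1]{PreTop}). For non-monoidal functors the kernel at $\unit$ does not determine the topology, so this step cannot be inherited from the additive theory. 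Finally, the antichain claim in (2) does not follow from a "saturation" argument about maximality within a fixed $\bT_\gamma$; it is the statement that kernels of prexact monoidal functors to \emph{different} tensor categories are equal or incomparable (Corollary~\ref{CorFlatKernel}, i.e. \cite[Corollary~4.4.4]{PreTop}), applied to $\theta^{\mZ}_\Delta$ and transported through Proposition~\ref{BigProp1}.
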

Parts (3) and (4) are tautological consequences of (1) and (2). The latter results are proved in Theorem~\ref{ThmUniTensork}.

\subsection*{Organisation of the paper}

In Section~\ref{SecPrel} we recall some preliminary notions. The remainder of the paper is divided into Parts~\ref{Part1} and~\ref{Part2}. Part~\ref{Part1} deals with the general theory of homological topologies and Part~\ref{Part2} applies that theory to our case of interest; tensor categories. 

Part \ref{Part1}: In Section~\ref{SecPrex}, we study prexact functors, which are functors for which the left Kan extension to the presheaf category is exact. In particular we use them to show that the set of Grothendieck topologies on a given category appears as a labelling set for a multi-representable 2-functor. In Section~\ref{SecHT} we introduce homological topologies to extend this result and in Section~\ref{SecMon} we discuss the generalisation of the previous result to monoidal functors.

Part \ref{Part2}: In Section~\ref{LocAbEnvTheory} we prove Theorem~A above. In Section~\ref{Examples} we explore examples. In particular we demonstrate that in certain cases $\Spec_U(\ba)$ is a singleton, giving new sources of examples of abelian envelopes and that in other cases $\Spec_U(\ba)$ can be an infinite set. We also realise the abelian categories of motives recently constructed in \cite{SchMot} as instances of our theory. Finally, in Appendix~\ref{App} we discuss some variations of the universal monoidal categories from \cite{Deligne} which are used in Section~\ref{Examples}.


\section{Preliminaries}\label{SecPrel}

\subsection{Abelian categories}

\subsubsection{}We follow the conventions from \cite{Gro} and use conditions AB1-AB5 from \cite[\S 1]{Gro}. An abelian category is an additive category satisfying AB1 and AB2. An AB3 category is an abelian category which admits all (small) coproducts, {\it i.e.} a cocomplete abelian category. An AB5 category is an AB3 category in which direct limits of short exact sequences are exact. A Grothendieck category is an AB5 category which admits a generator in the sense of \cite[\S 1.9]{Gro}.

\subsubsection{} A full additive subcategory $\bB$ of an abelian category $\bA$ is an abelian subcategory if $\bB$ contains all kernels and cokernels in $\bA$ of morphisms between objects in $\bB$. An AB3 subcategory of an AB3 category is an abelian subcategory closed under coproducts (and hence all small colimits).

\begin{definition}\label{DefMin}
An additive functor $\phi:\ba\to\bA$ from a small additive category $\ba$ to an AB3 category $\bA$ is {\bf AB3-tight} if for every AB3 subcategory $\bB\subset \bA$ which contains the image of $\phi$, we have $\bB=\bA$.
\end{definition}

The following lemmata are obvious consequences of the definitions, but it will be useful to have them spelled out.
\begin{lemma}\label{LemTight}
For a small additive category $\ba$ and AB3 categories $\bA$ and $\bB$, assume we have a commutative diagram of functors,
$$\xymatrix{
&\ba\ar[ld]\ar[d]^\phi\\
\bB\ar[r]^F&\bA,
}
$$
where $\phi$ is AB3-tight and $F$ is fully faithful, exact and cocontinuous. Then $F$ is an equivalence.
\end{lemma}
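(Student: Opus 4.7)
The plan is to show that the essential image of $F$ in $\bA$ is an AB3 subcategory containing the image of $\phi$, and then invoke AB3-tightness to conclude this image is all of $\bA$; combined with full faithfulness, this forces $F$ to be an equivalence.

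First I would let $\psi:\ba\to\bB$ denote the diagonal functor in the diagram, so that $F\circ\psi=\phi$, and let $\bB'\subset\bA$ be the replete essential image of $F$. By construction $\phi(\ba)\subset\bB'$. Next I would check that $\bB'$ is an abelian subcategory. For any morphism $f:F(b_1)\to F(b_2)$ in $\bB'$, full faithfulness of $F$ produces a unique $g:b_1\to b_2$ with $F(g)=f$; since $\bB$ is abelian it has a kernel and cokernel of $g$, and exactness of $F$ gives $\ker f\cong F(\ker g)$ and $\coker f\cong F(\coker g)$, both in $\bB'$. Hence $\bB'$ is closed under kernels and cokernels of its morphisms.

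Then I would verify closure under small coproducts: for any family $\{F(b_i)\}$ in $\bB'$, cocontinuity of $F$ gives $\coprod_i F(b_i)\cong F(\coprod_i b_i)\in\bB'$. Together with the previous step, this shows $\bB'$ is an AB3 subcategory of $\bA$ containing the image of $\phi$. AB3-tightness of $\phi$ (Definition~\ref{DefMin}) now forces $\bB'=\bA$, i.e.\ $F$ is essentially surjective. Since $F$ is already fully faithful by hypothesis, it is an equivalence.

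There is no real obstacle here; the only point requiring a mild check is that exactness of $F$ really does make the essential image closed under taking kernels and cokernels of arbitrary ambient morphisms, and this reduces to the usual observation that a fully faithful exact functor transports these universal constructions back and forth. Everything else is formal from the definitions of cocontinuity and AB3-tightness.
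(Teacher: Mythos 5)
Your proof is correct and is precisely the argument the paper has in mind: the paper states this lemma without proof as ``an obvious consequence of the definitions,'' and your verification that the replete essential image of $F$ is an AB3 subcategory containing the image of $\phi$ is the intended route. No gaps.
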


\begin{lemma}\label{LemAB5}
Consider a morphism $\oplus_{\beta\in B}M_\beta\to N$ in an AB5 category, and denote its kernel by $K$. For every finite subset $E\subset B$, denote by $K_E\subset K$ the kernel of $\oplus_{\beta\in E}M_\beta\to N$. Then we have $K=\cup_E K_E$.
\end{lemma}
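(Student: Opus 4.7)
The plan is to exploit the standard fact that the infinite coproduct $\oplus_{\beta\in B}M_\beta$ is canonically a filtered colimit, over the directed poset of finite subsets $E\subset B$ ordered by inclusion, of the finite subsums $\oplus_{\beta\in E}M_\beta$, with transition maps the obvious inclusions $\oplus_{\beta\in E}M_\beta \hookrightarrow \oplus_{\beta\in E'}M_\beta$ for $E\subset E'$. The given map $\oplus_{\beta\in B}M_\beta\to N$ then arises as the colimit of the restricted maps $\oplus_{\beta\in E}M_\beta\to N$ (viewing $N$ as the constant diagram).

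First, I would observe that taking kernels of the component maps produces a filtered diagram $\{K_E\}$ whose transition arrows $K_E\to K_{E'}$ are monomorphisms: this follows by a direct diagram chase from the fact that $\oplus_{\beta\in E}M_\beta \to \oplus_{\beta\in E'}M_\beta$ is a split mono. Each $K_E$ sits canonically as a subobject of $K$ via the kernel universal property applied to the inclusion into $\oplus_{\beta\in B}M_\beta$.

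The key step then invokes the defining property of AB5, namely that filtered colimits are exact, hence commute with the formation of kernels. This yields
\[
\colim_E K_E \;\cong\; \ker\Bigl(\colim_E \oplus_{\beta\in E}M_\beta \;\longrightarrow\; N\Bigr)\;=\;K.
\]
Since the transition maps in $\{K_E\}$ are monomorphisms and each $K_E$ is a subobject of a fixed object $K$, a standard AB5 argument (filtered colimits of subobject systems are their set-theoretic unions) identifies this colimit with the union $\bigcup_E K_E\subset K$, completing the proof.

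I do not expect any serious obstacle: the argument is essentially a direct application of AB5 plus the presentation of direct sums as filtered colimits of finite subsums. The only mild point requiring care is the final identification of the abstract filtered colimit $\colim_E K_E$ with the concrete union inside $K$, which relies specifically on the monicity of the transition maps rather than on AB5 in its full strength.
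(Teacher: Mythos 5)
Your proof is correct and follows essentially the same route as the paper: present $\oplus_{\beta\in B}M_\beta$ as the filtered colimit of the finite subsums, take the direct limit of the left exact sequences $0\to K_E\to\oplus_{\beta\in E}M_\beta\to N$, and invoke AB5 exactness of filtered colimits to conclude $\varinjlim_E K_E\xrightarrow{\sim}K$. The paper's version is just a more compressed statement of the same argument.
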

\begin{proof}
Since the sequence
$$0\to\varinjlim_E K_E\to \bigoplus_{\beta\in B}M_\beta\to N$$
is the direct limit of the obvious left exact sequences, it is left exact. Therefore, $\varinjlim_E K_E\to K$ is an isomorphism.
\end{proof}

\subsection{Additive Grothendieck topologies}\label{SecGT} We refer to \cite{BQ} for the theory of Grothendieck topologies on enriched categories. We are interested only in additive Grothendieck topologies on preadditive categories. For this case the required notions from \cite{BQ} are written out in detail in \cite[\S 1.4]{PreTop}.

We fix the following notation. For a small preadditive category $\ba$, we denote the set of Grothendieck topologies on $\ba$ by $\Top(\ba)$. For $\cT\in\Top(\ba)$, we denote the (Grothendieck) category of $\cT$-sheaves on $\ba$ by $\Sh(\ba,\cT)$ and the composition of the Yoneda embedding $\Yon$ with the (exact) sheafification functor $\tS$ is denoted as
$$\tZ\,:\,\ba\xrightarrow{\Yon}\PSh\ba\xrightarrow{\tS}\Sh(\ba,\cT).$$

\subsection{Tensor categories}

Fix a field $k$. 

\subsubsection{}\label{Defk}We say that an essentially small $k$-linear additive monoidal category $(\ba,\otimes,\unit)$ with $k$-linear tensor product is a {\bf rigid monoidal category over $k$} if
\begin{enumerate}
\item The morphism $k\to\End(\unit)$ is an isomorphism.
\item Every object $X\in\ba$ has a left and right dual $X^\ast$ and ${}^\ast X$.
\end{enumerate}
\noindent Recall that a left dual of $X$ is actually a triple $(X^\ast,\ev_X,\co_X)$ with morphisms $\ev_X:X^\ast\otimes X\to\unit$ and $\co_X:\unit\to X\otimes X^\ast$ satisfying the snake relations.
If moreover,
\begin{enumerate}
\item[(3)] $\ba$ is abelian,
\end{enumerate}
we say that $(\ba,\otimes,\unit)$ is a {\bf tensor category over $k$}. It then follows that $\unit$ is simple, see for instance \cite[1.1.5]{CEOP}. A standard example is the tensor category $\Vecc_k$ of finite-dimensional vector spaces. A tensor category over $k$ is {\bf artinian} if all objects have finite length (it then follows that all morphism spaces are finite-dimensional too).

We will refer to exact $k$-linear monoidal functors between tensor categories (over $k$ or field extensions $K/k$) as {\bf tensor functors}. By \cite[Theorem~2.4.1]{CEOP} or \cite[Theorem~4.4.1]{PreTop}, we can replace `exact' with `faithful' in this definition.

We consider the 2-category $\Tens=\Tens_k$ of tensor categories over $k$, tensor functors and monoidal natural transformations. By $\Tens^{\uparrow}=\Tens_k^{\uparrow}$ we mean the 2-category of tensor categories over all field extensions $K/k$, tensor functors and monoidal natural transformations.

We also consider the 2-categories $\RMon=\RMon_k$ (resp. $\RMon^{\uparrow}=\RMon^{\uparrow}_k$) of rigid monoidal categories over $k$ (resp. over field extensions $K/k$), $k$-linear {\em faithful} monoidal functors and monoidal natural transformations. 

By reformulating the definitions from \cite{EHS, CEH, CEOP} we have the following terminology.
\begin{definition} Let $\ba$ be a rigid monoidal category over $k$.
\begin{enumerate}
\item The category $\ba$ admits a weak abelian envelope if the 2-functor $\RMon(\ba,-):\Tens\to\Cat$ is representable. The {\bf weak abelian envelope} is then given by the faithful tensor functor $\ba\to\bT$ for the representing tensor category $\bT$ which induces the equivalence $\RMon(\ba,-)\simeq\Tens(\bT,-)$.
\item A weak abelian envelope as in (1) is an {\bf abelian envelope} if $\ba\to\bT$ is full.
\end{enumerate}
\end{definition}

\subsection{Notation and conventions}

For an additive category $\bA$, we denote by $\bA^{\mZ}$ the category $\bigoplus_{i\in\mZ}\bA$ and write $X\langle i\rangle$ for the object $X\in\bA$ interpreted in to be in the $i$-th copy of $\bA$ in $\bA^{\mZ}$.

For a ring $R$, we denote by $R\free$ the category of free (left) $R$-modules of finite rank. By $R\Mod$ we denote the category of all $R$-modules. We denote by $\Ab$ the category of abelian groups and by $\Ab_f$ the category of finitely generated abelian groups.

In Part 1, for two pradditive categories $\ba,\bb$, we denote by $[\ba,\bb]$ the category of additive functors $\ba\to\bb$. For instance $\PSh\ba=[\ba^{\op},\Ab]$. We will use subscripts to specify subcategories. For instance, if $\ba$ is finitely complete, $[\ba,\bb]_{lex}$ is the category of left exact functors and if $\ba$ is triangulated and $\bb$ abelian, $[\ba,\bb]_{hom}$ is the category of homological functors. In Part 2 the same notation will be used for $k$-linear functors.

For a category $C$ of functors between two (symmetric) monoidal categories, $C^\otimes$ ($C^{s\otimes}$) is the category of (symmetric) monoidal functors, and monoidal natural transformations, for which the underlying functors belong to~$C$.

We largely ignore set theoretic issues. In Part 1, it would be more precise to work with additive essentially small $\mathcal{U}$-categories and abelian $\mathcal{U}$-categories, and then extend the Grothendieck universe $\mathcal{U}$ when considering 2-categories. For Part 2 this becomes irrelevant as we only need essentially small 1-categories.


\vspace{2mm}

\part{Homological Grothendieck topologies}\label{Part1}

\section{Prexact functors and Grothendieck topologies}\label{SecPrex}
Let $\ba$ and $\bb$ be essentially small preadditive categories and let $\bC$ denote an AB3 (cocomplete abelian) category. 
\subsection{Definitions}

\subsubsection{}\label{DefLan}For any additive functor $\theta:\ba\to\bC$, we can consider the left Kan extension $\Lan_{\Yon}\theta:\PSh\ba\to\bC$ along the Yoneda embedding $\Yon:\ba\to\PSh\ba$, which we will typically denote by~$\Theta$. For our purposes, we can actually define $\Theta$ as the unique, up to isomorphism, cocontinuous functor $\Theta:\PSh\ba\to\bC$ for which $\Theta\circ\Yon$ is isomorphic to $\theta$, see for instance \cite[Proposition~2.5.2]{PreTop}, or as the left adjoint to $\bC\to\PSh\ba,\,M\mapsto \bC(\theta-,M)$. 

For an additive functor $u:\ba\to\bb$, we consider 
$$u_!=-\otimes_{\ba}\bb:=\Lan_{\Yon}(\Yon\circ u)\,:\,\PSh\ba\to\PSh\bb.$$
By the above, $u_!$ is the left adjoint of $-\circ u:\PSh\bb\to\PSh\ba$.

\begin{definition}\label{DefFlat}
\begin{enumerate}
\item An additive functor $\theta:\ba\to\bC$ is {\bf prexact} if the cocontinuous functor $\Theta=\Lan_{\Yon}\theta:\PSh\ba\to\bC$ is (left) exact.
\item An additive functor $u:\ba\to\bb$ is {\bf flat} if the cocontinuous functor $u_!:\PSh\ba\to\PSh\bb$ is (left) exact, or equivalently if $\Yon\circ u$ is prexact. 
\end{enumerate}
\end{definition}


By \ref{DefLan}, we can alternatively define prexact functors as the restrictions of exact cocontinuous functors $\PSh\ba\to\bC$ along $\Yon$. This yields the following examples.
\begin{example}\label{ExPrex}
\begin{enumerate}
\item The functor $\tZ:\ba\to\Sh(\ba,\cT)$ is prexact for every $\cT\in\Top(\ba)$. In fact, these will be the `universal prexact functors'.
\item If $\theta:\ba\to\bC$ is prexact, then so is $F\circ \theta$ for every exact cocontinuous functor $F$ between AB3 categories. If $F$ is also faithful then conversely $F\circ \theta$ prexact implies that $\theta$ must be prexact.
\item If $\theta:\ba\to\bC$ is prexact and $u:\bb\to\ba$ is flat, then $\theta\circ u$ is prexact.
\end{enumerate}
\end{example}

\subsubsection{The topology of a prexact functor}\label{Ttheta} Consider a prexact functor $\theta:\ba\to\bC$. We denote by $\cT_{\theta}$ the following covering system on $\ba$. For $X\in\ba$ and $R\subset\ba(-,X)$, we have $R\in\cT_\theta(X)$ if and only if one of the following equivalent conditions is satisfied
\begin{enumerate}
\item $\Theta(R)\;\to\;\theta(X)$ is an isomorphism;
\item $R$ contains a collection of morphisms $Y_\beta\to X$ such that $\oplus_\beta\theta(Y_\beta)\to\theta(X)$ is an epimorphism.
\end{enumerate}
Then $\cT_\theta$ is a Grothendieck topology on $\ba$, as follows either directly from the definition (\cite[1.4.2]{PreTop}) or because, using the notation of \cite[\S 2.2]{PreTop}, we have $\cT_{\theta}=\mathrm{top}(\cS(\Theta))$.

\begin{example}\label{ExTopPrex}
\begin{enumerate}
\item For a given Grothendieck topology $\cT$ on $\ba$ and $\tZ:\ba\to\Sh(\ba,\cT)$, we have $\cT_{\tZ}=\cT$, by \cite{BQ}, see \cite[Theorem~1.4.5]{PreTop}.
\item For a prexact functor $\theta:\ba\to\bC$ and a faithful exact cocontinuous functor $F:\bC\to\bB$ to a second AB3 category $\bB$, it follows that $\cT_{\theta}=\cT_{F\circ \theta}$.
\end{enumerate}
\end{example}


For sets $\{X_\alpha\,|\, \alpha\in A\}$ and $\{Y_\beta\,|\, \beta\in B\}$ of objects in $\ba$ we call elements of
$$\PSh\ba\left(\bigoplus_\beta \Yon(Y_\beta),\bigoplus_\alpha\Yon(X_\alpha)\right)\;\simeq\; \prod_\beta\bigoplus_\alpha\ba(Y_\beta,X_\alpha)$$
`formal morphisms' and write them as $\sqcup_\beta Y_\beta\to\sqcup_\alpha X_\alpha$. Their composition is defined canonically.

\begin{lemma}\label{ThmLem}
For an additive functor $\theta:\ba\to\bC$ the following are equivalent:
\begin{enumerate}[label=(\alph*)]
\item $\theta$ is prexact.
\item For each formal morphism $f:\sqcup_\beta Y_\beta\to \sqcup_\alpha X_\alpha$ in $\ba$, the sequence
$$\bigoplus_{g:Z\to \sqcup_\beta Y_\beta, f\circ g=0}\theta(Z)\;\to\;\bigoplus_\beta \theta(Y_\beta)\;\to \;\bigoplus_{\alpha}\theta(X_\alpha)$$   
is exact (acyclic) in $\bC$.
\end{enumerate}
\end{lemma}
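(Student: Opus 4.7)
The plan hinges on a Yoneda observation: in $\PSh\ba$, the subpresheaf $\ker f\subseteq\bigoplus\Yon(Y_\beta)$ of a formal morphism $f:\bigoplus\Yon(Y_\beta)\to\bigoplus\Yon(X_\alpha)$ has $Z$-sections equal to the set $\{g:Z\to\sqcup Y_\beta\mid fg=0\}$, so the canonical morphism $\bigoplus_g \Yon(Z)\twoheadrightarrow \ker f$ is an epimorphism. The sequence of representables in (b) is therefore already acyclic at $\bigoplus\Yon(Y_\beta)$ inside $\PSh\ba$. The implication (a)$\Rightarrow$(b) follows at once by applying the exact cocontinuous functor $\Theta=\Lan_\Yon\theta$ and using $\Theta\circ\Yon\simeq\theta$.

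For (b)$\Rightarrow$(a), since $\Theta$ is cocontinuous hence right exact, it remains to show $\Theta$ preserves kernels. First I would establish this for morphisms $f$ between coproducts of representables via bootstrapping. Writing $\ker f\simeq P_1/K$ with $P_1:=\bigoplus_g\Yon(Z)$ as above and $K:=\ker(P_1\to\bigoplus\Yon(Y_\beta))$, choose a further epi $P_2\twoheadrightarrow K$ with $P_2$ a coproduct of representables, of the shape prescribed by (b) applied to the morphism $P_1\to\bigoplus\Yon(Y_\beta)$. Invoking (b) twice---once for $f$ and once for $P_1\to\bigoplus\Yon(Y_\beta)$---together with right exactness of $\Theta$, identifies $\Theta(\ker f)=\coker\Theta(P_2\to P_1)$ with $\ker\Theta f$.

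Next, to handle a general $\varphi:M\to N$, the idea is to reduce to the previous case. Choose epimorphisms $P\twoheadrightarrow M$ and $Q\twoheadrightarrow N$ with $P,Q$ coproducts of representables and with respective kernels $R$ and $S$, coproduct-of-representables covers $P_R\twoheadrightarrow R$ and $P_S\twoheadrightarrow S$, and a lift $\tilde\varphi:P\to Q$ of $\varphi$ (projectivity of $P$). The auxiliary formal morphism $(\tilde\varphi,-\iota):P\oplus P_S\to Q$, where $\iota$ is the composite $P_S\to S\hookrightarrow Q$, is again between coproducts of representables; its $\PSh\ba$-kernel surjects onto $\tilde\varphi^{-1}(S)\subseteq P$, which surjects onto $\ker\varphi$. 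Applying the previous step to this auxiliary morphism, combined with cocontinuity of $\Theta$, identifies the image of $\Theta(\ker\varphi)\to \Theta(M)$ with $\ker\Theta\varphi$. Injectivity follows from a decomposition argument: any $(\bar p,\bar s)\in \ker\Theta(\tilde\varphi,-\iota)$ with $\bar p\in\im\Theta(P_R\to P)$ can be written, using a lift $\lambda:P_R\to P_S$ of $\tilde\varphi\delta$ (projectivity of $P_R$), as $\Theta(\delta,\lambda)(\bar r)+(0,\bar t)$ with $\bar t\in\Theta(\ker\iota)$; both summands map to zero in $\Theta(\ker\varphi)$ because the underlying $\PSh\ba$-morphisms $P_R\to\ker\varphi$ and $\ker\iota\to\ker\varphi$ already vanish.

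The main obstacle is this injectivity check: $\ker\varphi$ is itself not a coproduct of representables, so the first stage cannot be applied to the inclusion $\ker\varphi\hookrightarrow M$ directly. The trick is to pass through the auxiliary formal morphism between coproducts of representables and to identify precisely which contributions to $\Theta(\ker\varphi)$ must vanish.
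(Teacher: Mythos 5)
Your proof is correct, and the first implication coincides with the paper's: the representable sequence is tautologically acyclic in $\PSh\ba$, and one applies the exact cocontinuous $\Theta$. For (b)$\Rightarrow$(a) you share the paper's key input --- condition (b) furnishes, for any formal morphism, a three-term presentation of its kernel by coproducts of representables that $\Theta$ keeps acyclic --- but you extract exactness differently. The paper builds such partial resolutions for all three terms of a short exact sequence of presheaves (horseshoe-style, with split exact auxiliary rows), applies $\Theta$, and concludes by a diagram chase. You instead prove kernel preservation directly: first for formal morphisms (two applications of (b) plus right exactness identify $\coker\Theta(P_2\to P_1)$ with $\ker\Theta f$), then for a general $\varphi:M\to N$ via the auxiliary formal morphism $(\tilde\varphi,-\iota):P\oplus P_S\to Q$, checking surjectivity and injectivity of the comparison $\Theta(\ker\varphi)\to\ker\Theta\varphi$ by hand. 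Your injectivity step is sound: the decomposition of a lift $(\bar p,\bar s)$ with $\bar p\in\im\,\Theta(P_R\to P)$ as $\Theta(\delta,\lambda)(\bar r)+(0,\bar t)$ works because $\iota\lambda=\tilde\varphi\delta$ forces $(0,\bar t)$ back into $\ker\Theta(\tilde\varphi,-\iota)$, and identifying $\bar t$ with an element of $\Theta(\ker\iota)$ uses your first stage once more for the formal morphism $\iota:P_S\to Q$, which is legitimate; both summands then die in $\Theta(\ker\varphi)$ since the underlying presheaf morphisms $P_R\to\ker\varphi$ and $\ker\iota\to\ker\varphi$ vanish. The paper's route is shorter because the diagram chase subsumes your injectivity check; yours buys an explicit construction of the isomorphism $\Theta(\ker\varphi)\cong\ker\Theta\varphi$ and never leaves the realm of formal morphisms, at the price of the more delicate final decomposition.
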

\begin{proof}
Assume first that $\theta$ is prexact. The sequence 
$$\bigoplus_{g:Z\to \sqcup_\beta Y_\beta, f\circ g=0}\Yon(Z)\;\to\; \bigoplus_\beta\Yon(Y_\beta)\;\to \;\bigoplus_\alpha\Yon(X_\alpha)$$
is tautologically exact in $\PSh\ba$. The fact that $\Theta$ is exact and cocontinuous then implies that the sequence in (b) is exact. Hence (a) implies (b).

If (b) is satisfied, it follows that for every $N\in \PSh\ba$ we can construct an exact sequence
$$\bigoplus_\gamma \Yon(Z_\gamma)\;\to\; \bigoplus_\beta\Yon(Y_\beta)\;\to\; \bigoplus_\alpha \Yon(X_\alpha)\;\to\; N\; \to\; 0$$
in $\PSh\ba$ which is sent to an exact sequence in $\bC$ by $\Theta$. Indeed, we can start from a projective presentation of $N$ and add an additional term by applying (b).

That $\Theta$ is exact (so that (b) implies (a)) is then a standard consequence of this observation. Indeed, for a short exact sequence 
$$0\to N\to N'\to N''\to 0$$
in $\PSh\ba$ we can take such a partial projective resolution of $N$ and $N''$, which also induces one for $N'$. This allows us to enlarge the above exact row to a commutative diagram where every row except the one displayed above is {\em split} exact and which has exact columns. Applying $\Theta$ then preserves exactness of all rows, except that, a priori, 
$$0\to \Theta N\to \Theta N'\to \Theta N''\to 0$$ is only right exact. By assumption, the right column is exact and, by right exactness of $\Theta$, also the middle column is exact when we ignore the lowest term.
It follows from diagram chasing that also the displayed row is exact.
\end{proof}


\subsection{Multi-representability for prexact functors and a partial function}
The two results in this section will be completed in Section~\ref{SecHT} when we introduce homological Grothendieck topologies.


\subsubsection{}\label{Def2CatGro} Consider the 2-category $\ABt$ which has as objects AB3 categories, as 1-morphisms faithful exact cocontinuous functors and as 2-morphisms all natural transformations.

For an AB3 category $\bC$, we denote by $[\ba,\bC]_{prex}$ the category of prexact functors $\ba\to\bC$.
By Example~\ref{ExPrex}(2), we can interpret this as a 2-functor
$$[\ba,-]_{prex}\,:\;\ABt\to\Cat.$$
Unless $\ba$ is the zero category, this 2-functor is not representable, as Example~\ref{ExTopPrex}(2) implies the 2-functor decomposes as
\begin{equation}\label{deca[}
[ba,-]_{prex}\;=\;\coprod_{\cT\in \Top\ba} [\ba,-]_{prex,\cT}
\end{equation}
where $[\ba,-]_{prex,\cT}$ is the category of prexact functors $\phi$ with $\cT_\phi=\cT$. 

\begin{remark}
The decomposition \eqref{deca[} ignores the additive structure of $[\ba,\bC]_{prex}$. In fact, for $F_i\in [\ba,\bC]_{prex,\cT_i}$ with $i\in\{1,2\}$, we have $$F_1\oplus F_2\;\in\; [\ba,\bC]_{prex, \cT_1\cap\cT_2}.$$
\end{remark}

\begin{theorem}\label{PropFlatRep}
For each Grothendieck topology $\cT$ on $\ba$, the 2-functor $$ [\ba,-]_{prex,\cT}:\;\ABt\to\Cat$$ is represented by $\Sh(\ba,\cT)$. More concretely, composition with $\tZ:\ba\to\Sh(\ba,\cT)$ yields an equivalence
$$\ABt(\Sh(\ba,\cT),\bC)\;\stackrel{\sim}{\to}\; [\ba,\bC]_{prex,\cT}.$$
\end{theorem}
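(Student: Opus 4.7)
The plan is to show that composition with $\tZ$ gives an equivalence $\ABt(\Sh(\ba,\cT),\bC)\xrightarrow{\sim}[\ba,\bC]_{prex,\cT}$ by constructing an explicit quasi-inverse. I would organize the argument in four steps: well-definedness, essential surjectivity via Gabriel localization, faithfulness of the inverse, and full faithfulness on natural transformations. Well-definedness is immediate: for any $F\in\ABt(\Sh(\ba,\cT),\bC)$ the composition $F\tZ$ is prexact by Example~\ref{ExPrex}(1)--(2), and $\cT_{F\tZ}=\cT_{\tZ}=\cT$ by Example~\ref{ExTopPrex}(1)--(2).

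For essential surjectivity, given $\theta\in[\ba,\bC]_{prex,\cT}$ I would start from the cocontinuous exact $\Theta=\Lan_{\Yon}\theta:\PSh\ba\to\bC$. The hypothesis $\cT_\theta=\cT$ says that $\Theta(R\hookrightarrow\Yon(X))$ is an epimorphism, equivalently $\Theta(\Yon(X)/R)=0$, for every $R\in\cT(X)$. Since $\Theta$ is exact and cocontinuous, the full subcategory of presheaves killed by $\Theta$ is a localizing Serre subcategory; I would argue that it contains all of $\ker\tS$ by writing any presheaf $N$ with $\tS N=0$ as an iterated quotient of coproducts of objects $\Yon(X)/R$ with $R\in\cT(X)$ (arising from sections of $N$ whose sieve-kernels are covering precisely because $\tS N=0$). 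The universal property of the Gabriel quotient $\tS$ then produces a unique cocontinuous $F:\Sh(\ba,\cT)\to\bC$ with $\Theta\simeq F\tS$, hence $F\tZ\simeq\theta$. Writing $\iota:\Sh(\ba,\cT)\hookrightarrow\PSh\ba$ for the right adjoint of $\tS$, exactness of $F$ follows from $FA\simeq F\tS\iota A=\Theta\iota A$ together with the left exactness of both $\Theta$ and $\iota$; right exactness is immediate from cocontinuity.

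Faithfulness of $F$ reduces to showing $FA=0\Rightarrow A=0$. For $A\ne 0$, pick a nonzero morphism $\tilde s:\Yon(X)\to\iota A$ whose sheafification $\tZ(X)\to A$ is nonzero; if $\Yon(X)/R$ denotes its presheaf image, then $\tS(\Yon(X)/R)\ne 0$, hence $R\notin\cT(X)=\cT_\theta(X)$, which forces $\Theta(\Yon(X)/R)\ne 0$. The monomorphism $\tS(\Yon(X)/R)\hookrightarrow A$ then yields $FA\ne 0$ via exactness of $F$. General faithfulness follows by passing to the image of any morphism $f$ with $Ff=0$.

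Finally, for the equivalence on natural transformations: any $\eta:F\tZ\to G\tZ$ between the restrictions of cocontinuous $F,G:\Sh(\ba,\cT)\to\bC$ extends uniquely to $F\to G$, because every sheaf is a canonical colimit of objects of the form $\tZ(X)$ and cocontinuous functors together with their natural transformations are determined by restriction to a colimit-dense subcategory. I expect the main obstacle to be the step showing that $\Theta$ annihilates all of $\ker\tS$ and not merely the generators $\{\Yon(X)/R\}_R$: the required description of $\ker\tS$ as the localizing Serre subcategory generated by these objects is standard in sheaf theory but uses the precise characterization of $\tS$-local presheaves in terms of covering sieves, and this is where the topology hypothesis enters with its full force rather than merely via one of its consequences.
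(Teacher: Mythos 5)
Your proof is correct, and most of its substance coincides with the paper's: exactness of the induced functor via the identification $F\simeq \Theta\circ\tI$ with the inclusion $\tI$ of sheaves into presheaves, and faithfulness by producing, for a nonzero sheaf $A$, a morphism $\Yon(X)\to\tI(A)$ whose kernel sieve $R$ is non-covering so that $\Theta(\Yon(X)/R)\neq 0$, are exactly the paper's steps (the paper phrases the latter as a contradiction, you phrase it directly, but it is the same computation). Where you genuinely diverge is in how the functor $F$ is produced and how full faithfulness of $-\circ\tZ$ is obtained. The paper invokes \cite[Proposition~2.5.2]{PreTop}: composition with $\tZ$ identifies cocontinuous functors $\Sh(\ba,\cT)\to\bC$ with functors $\ba\to\bC$ sending the sequences of the pretopology $\preo(\cT)$ to right exact sequences, and then checks that prexact functors of topology $\cT$ land in the right subcategory. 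You instead realise $\Sh(\ba,\cT)$ as the Gabriel quotient of $\PSh\ba$ by $\ker\tS$, identify $\ker\tS$ with the localizing subcategory generated by the quotients $\Yon(X)/R$ with $R\in\cT(X)$ (via the standard characterization of $\cT$-torsion presheaves: every section is annihilated by a covering sieve — in fact a single quotient of a single coproduct suffices, no iteration is needed), note that the exact cocontinuous $\Theta$ kills these generators precisely because $\cT_\theta=\cT$, and factor $\Theta$ through $\tS$. This is more self-contained, at the cost of having to supply separately (i) cocontinuity of the induced $F$ — which deserves a word: the right adjoint $M\mapsto\bC(\theta-,M)$ of $\Theta$ takes values in sheaves because $\Theta(R)\to\theta(X)$ is invertible for covering $R$, so it restricts to a right adjoint of $F$ — and (ii) full faithfulness of $-\circ\tZ$ on natural transformations, which you correctly reduce to density of the objects $\tZ(X)$ in $\Sh(\ba,\cT)$; the paper gets both for free from the cited equivalence. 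Both routes rest on the same localization theory, so the difference is one of packaging rather than of mathematical content, but your version has the merit of making visible exactly where the hypothesis $\cT_\theta=\cT$ is used in each direction.
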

\begin{proof}

We let $\cS$ be the class of all formal sequences 
$$\sqcup_{\gamma} Z_\gamma\,\to\, \sqcup_\beta Y_\beta\,\to\, X$$
in $\ba$ for which $\oplus\Yon(Z_\gamma)\to\oplus \Yon(Y_\beta)\to \Yon(X)$ is acyclic in $\PSh\ba$ and for which  $\oplus\tZ(Y_\beta)\to \tZ(X)$ is an epimorphism (or equivalently the sieve generated by $\sqcup_\beta Y_\beta\to X $ is in the topology $\cT$). This is the pretopology $\cS=\pre'(\cT)$ from \cite[\S 2.2.4]{PreTop}.

By \cite[Proposition~2.5.2]{PreTop}, composition with $\tZ$ yields an equivalence
\begin{equation}\label{StartEq}-\circ\tZ\,:\;[\Sh(\ba,\cT),\bC]_{cc}\;\stackrel{\sim}{\to}\; [\ba,\bC]_{\cS},\end{equation}
where the left-hand side is the category of cocontinuous functors and the right-hand side is the category of additive functors $\theta:\ba\to\bC$ for which $\oplus\theta(Z_\gamma)\to\oplus \theta(Y_\beta)\to \theta(X)\to0$ is exact for each sequence in $\cS$. We will prove that this equivalence \eqref{StartEq} restricts to the one in the theorem.

By definition, $\ABt(\Sh(\ba,\cT),\bC)$ is a full subcategory of $[\Sh(\ba,\cT),\bC]_{cc}$. We claim that $[\ba,\bC]_{prex,\cT}$ is a (full) subcategory of $[\ba,\bC]_{\cS}$. Indeed, consider $\theta\in[\ba,\bC]_{prex}$. Then $\Theta$ is exact and cocontinuous, so it follows that $\oplus\theta(Z_\gamma)\to\oplus \theta(Y_\beta)\to \theta(X)$ is acyclic. If furthermore $\theta\in[\ba,\bC]_{prex,\cT}$ it follows that $\oplus\theta(Y_\beta)\to \theta(X)$ is an epimorphism. Now we prove that equivalence  restricts to the relevant subcategories.

By applying equivalence \eqref{StartEq} to $\theta\in[\ba,\bC]_{prex,\cT}$ we find an essentially unique cocontinuous functor $T: \Sh(\ba,\cT)\to\bC$ with $T\circ\tZ\simeq \theta$. We have to show that $T$ is exact and faithful.

Since we have $\tZ=\tS\circ\Yon$ and $\theta\simeq \Theta\circ\Yon$, we find $T\circ\tS\simeq \Theta$. Denote by $\tI$ the inclusion of the full subcategory $\Sh(\ba,\cT)$ in $\PSh\ba$, so $\tS\dashv \tI$. Using $\tS\circ \tI\simeq\id$ then implies that $T\simeq \Theta\circ\tI$. The right-hand side of the latter isomorphism is left exact, which shows that $T$ is (left) exact.

Now assume $T(F)=0$ for $F\in\Sh(\ba,\cT)$, which is equivalent to $\Theta(\tI(F))=0$. If $F$ is non-zero there exists a non-zero morphism $\Yon(X)\to\tI(F)$ for some $X\in\ba$, and hence an acyclic sequence
$$\bigoplus_\beta\Yon(Y_\beta)\to\Yon(X)\to \tI(F)$$
in $\PSh\ba$. Applying the exact continuous functor $\Theta$ shows that $\oplus_\beta\theta(Y_\beta))\to\theta(X)$ is an epimorphism, which means that $\oplus_\beta\tZ(Y_\beta)\to\tZ(X)$ is also an epimorphism since $\cT_{\tZ}=\cT=\cT_\theta$. It follows in particular that the morphism $\tZ(X)\to F$ obtained by adjunction from $\Yon(X)\to \tI(F)$ is zero, a contradiction. We have proved that $T$ is exact and faithful as desired.

Now assume that we have an exact faithful and cocontinuous functor $F:\Sh(\ba,\cT)\to\bC$. Then $F\circ\tZ$ is prexact since $\tZ$ is prexact, see Example~\ref{ExPrex}, and faithfulness of $F$ shows that $\cT_{F\circ \tZ}=\cT_{\tZ}=\cT$, see Example~\ref{ExTopPrex}.
\end{proof}

The following consequence is elementary, but it will be useful to have it written out.
\begin{corollary}\label{CorTriv}
Consider prexact functors $\xi_i:\ba\to \bC_i$ to AB3 categories $\bC_i$ for $i\in\{1,2\}$. If $\cT_{\xi_1}=\cT_{\xi_2}$, then we also have $\ker\xi_1=\ker\xi_2$.
\end{corollary}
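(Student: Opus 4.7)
The plan is to use Theorem~\ref{PropFlatRep} to funnel both $\xi_1$ and $\xi_2$ through the same universal prexact functor $\tZ:\ba\to\Sh(\ba,\cT)$, where $\cT:=\cT_{\xi_1}=\cT_{\xi_2}$, and then deduce that the kernel is already determined at that universal stage.

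First I would invoke Theorem~\ref{PropFlatRep} twice: for each $i\in\{1,2\}$, there is an essentially unique exact faithful cocontinuous functor $T_i:\Sh(\ba,\cT)\to\bC_i$ with $T_i\circ\tZ\simeq\xi_i$. The key observation is that a faithful exact functor reflects zero morphisms (and more generally reflects vanishing of images of morphisms in $\PSh\ba$ under the Kan extension, since $\Xi_i\simeq T_i\circ\tS\circ(-)$ on representables, and $\tS$ together with the inclusion $\tI$ already controls kernels on $\Sh(\ba,\cT)$). In particular, for a morphism $f$ of $\ba$, or more generally a formal morphism $f:\sqcup_\beta Y_\beta\to\sqcup_\alpha X_\alpha$, one has $\xi_i(f)=0$ if and only if $\tZ(f)=0$, because applying the faithful functor $T_i$ detects vanishing.

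Hence, interpreting $\ker\xi_i$ as the sub(bi)functor of $\ba(-,-)$ (or of the formal morphism spaces) consisting of morphisms sent to zero by $\xi_i$, we get
\[
\ker\xi_1 \;=\; \ker\tZ \;=\; \ker\xi_2,
\]
which is the desired equality. The same argument works for any reasonable notion of kernel one wants to attach to a prexact functor---for instance, the Serre-style kernel of the cocontinuous extension $\Xi_i:\PSh\ba\to\bC_i$---since $\Xi_i\simeq T_i\circ\tI\circ\tS$ by the proof of Theorem~\ref{PropFlatRep}, and $T_i$ faithful exact gives $\ker\Xi_i=\ker(\tI\circ\tS)$, again independent of $i$.

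There is no real obstacle here; the statement is a direct corollary of the universal property proved in Theorem~\ref{PropFlatRep}, together with the elementary fact that a faithful exact functor reflects zero objects and morphisms. The only thing to be careful about is fixing, once and for all, the precise meaning of $\ker\xi$ (morphisms in $\ba$ sent to zero, or the kernel of the cocontinuous extension on $\PSh\ba$); both interpretations are handled simultaneously by the argument above.
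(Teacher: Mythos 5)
Your proof is correct and matches the paper's intent exactly: the paper gives no written proof, calling the corollary an ``elementary consequence'' of Theorem~\ref{PropFlatRep}, and the argument it has in mind is precisely yours --- factor each $\xi_i$ as $T_i\circ\tZ$ through the same $\Sh(\ba,\cT)$ via the representability theorem and use faithfulness of $T_i$ to conclude $\ker\xi_i=\ker\tZ$. Your remark that the argument is insensitive to which notion of kernel (ideal of morphisms killed versus kernel of the cocontinuous extension) is a worthwhile clarification, since the paper later uses $\ker\theta$ to form quotient categories $\ba/\ker\theta$.
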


\subsubsection{A partial function}\label{DefPartFun}
Consider an additive functor $u:\ba\to\bb$. We define a (inclusion preserving) partial function
$$u_{t}:\Top(\bb) \rightharpoonup \Top(\ba).$$
For a Grothendieck topology $\cT$ on $\bb$, consider $\tZ:\bb\to\Sh(\bb,\cT)$. In case $\tZ\circ u$ is prexact, we define $u_{t}(\cT)$ as $\cT_{\tZ\circ u}$. In other words, when $u_t(\cT)$ is defined, then $R\subset \ba(-,A)$ is in $u_t(\cT)$ if and only if it contains morphisms $f_\beta :B_\beta\to A$ for which the sieve on $u(A)$ generated by $u(f_\beta)$ is in $\cT$. By Theorem~\ref{PropFlatRep}, in case $u_t(\cT)$ is defined, there exists a commutative diagram of functors
\begin{equation}\label{CompareSh}\xymatrix{
\ba\ar[rr]^u\ar[d]^{\tZ}&&\bb\ar[d]^{\tZ}\\
\Sh(\ba,u_t(\cT))\ar@{-->}[rr]&&\Sh(\bb,\cT)
}
\end{equation}
where the lower arrow is a faithful exact cocontinuous functor.

By Example~\ref{ExPrex}(3), $u_t$ is a function (defined everywhere) if and only if $u$ is flat.





\subsection{Criteria for flat and prexact functors}\label{Crit}
In this section we assume that $\bC$ is an AB5 category and that $\ba$ and $\bb$ are additive. Then we can improve on Lemma~\ref{ThmLem}.
\begin{theorem}\label{ThmFlat}
For an additive functor $\theta:\ba\to\bC$ the following are equivalent:
\begin{enumerate}[label=(\alph*)]
\item $\theta$ is prexact.
\item For each $f:Y\to X$ in $\ba$, the sequence
$$\bigoplus_{g:Z\to Y, f\circ g=0}\theta(Z)\;\to\; \theta(Y)\;\to \;\theta(X)$$   
is exact (acyclic) in $\bC$.
\end{enumerate}
\end{theorem}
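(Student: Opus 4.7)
The implication $(a)\Rightarrow(b)$ is an immediate application of Lemma~\ref{ThmLem}, specialising the formal morphism so that both source and target are single objects $Y$ and $X$ of $\ba$.

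For $(b)\Rightarrow(a)$, the plan is to verify condition (b) of Lemma~\ref{ThmLem} for an arbitrary formal morphism $f:\sqcup_{\beta\in B}Y_\beta\to \sqcup_{\alpha\in A}X_\alpha$. The strategy exploits two ingredients: the additivity of $\ba$, which promotes finite sub-direct-sums to honest objects of $\ba$; and the AB5 property of $\bC$, which via Lemma~\ref{LemAB5} detects the kernel of a morphism out of an infinite coproduct from its finite sub-coproducts.

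Concretely, for each finite $E\subset B$ one chooses a finite $A_E\subset A$ large enough that the restriction of $f$ to $Y_E:=\oplus_{\beta\in E}Y_\beta$ (an object of $\ba$) factors through $X_E:=\oplus_{\alpha\in A_E}X_\alpha$, yielding a genuine morphism $f_E:Y_E\to X_E$ in $\ba$. Via the additivity isomorphisms $\theta(Y_E)\simeq\oplus_{\beta\in E}\theta(Y_\beta)$ and $\theta(X_E)\simeq\oplus_{\alpha\in A_E}\theta(X_\alpha)$, the map $\theta(f_E)$ agrees with the corestriction of $\oplus_{\beta\in E}\theta(Y_\beta)\to\oplus_\alpha\theta(X_\alpha)$; in particular the kernel $K_E$ of the latter coincides with $\ker\theta(f_E)$. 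Lemma~\ref{LemAB5} then identifies $\ker(\oplus_\beta\theta(Y_\beta)\to\oplus_\alpha\theta(X_\alpha))$ with the filtered union $\bigcup_E K_E$. Hypothesis (b) applied to the single morphism $f_E$ expresses each $K_E$ as the image of the sum of $\theta(g):\theta(Z)\to\theta(Y_E)$ over $g:Z\to Y_E$ with $f_E\circ g=0$; composing each such $g$ with the coproduct inclusion $Y_E\hookrightarrow \sqcup_\beta Y_\beta$ gives a formal morphism $Z\to \sqcup_\beta Y_\beta$ annihilated by $f$, and taking the union over $E$ yields the desired exactness from Lemma~\ref{ThmLem}(b).

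I anticipate no serious obstacle: the argument is essentially a bookkeeping exercise converting the infinite statement of Lemma~\ref{ThmLem}(b) into finite instances of hypothesis (b), using both the additive structure of $\ba$ (so that each $f_E$ is a genuine morphism in $\ba$ to which (b) applies) and the AB5 hypothesis on $\bC$ (essential for passing from finite approximations to the full kernel via Lemma~\ref{LemAB5}).
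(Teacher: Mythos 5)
Your proposal is correct and follows essentially the same route as the paper's proof: reduce via Lemma~\ref{ThmLem} to exactness for formal morphisms, use additivity of $\ba$ and $\theta$ to replace finite sub-coproducts by genuine objects and morphisms of $\ba$ (the support of a formal morphism on finitely many $Y_\beta$ being finite), and then invoke Lemma~\ref{LemAB5} to write the full kernel as the filtered union of the kernels $K_E$ over finite $E\subset B$. The only cosmetic difference is that the paper treats ``$B$ finite, $A$ infinite'' as a separate intermediate step, whereas you fold that reduction into the choice of $A_E$ for each $E$.
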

\begin{proof}
Based on Lemma~\ref{ThmLem}, it is sufficient to prove that for an AB5 category $\bC$, exactness of all sequences in (b) implies exactness of all sequences in \ref{ThmLem}(b).

We will prove this in increasing generality. Denote by $A$ and $B$ the sets to which $\alpha$ and $\beta$ in \ref{ThmLem}(b) belong.
 Since $\ba$ and $\theta$ are additive, when $A$ and $B$ are both finite, exactness of the sequence in \ref{ThmLem}(b) follows from (b). Now assume that $B$ is finite but allow $A$ to be infinite. By definition of formal morphisms, we can still interpret $f$ as an actual morphism in $\ba$ and \ref{ThmLem}(b) remains exact. Finally we also allow $B$ to be infinite. Denote by $K$ the kernel of the right morphism of the sequence in \ref{ThmLem}(b) and by $I\subset K$ the image of the left morphism. For every finite subset $E\subset B$, by the previous case we know that the kernel $K_E\subset K$ of $\bigoplus_{\beta\in E} \theta(Y_\beta)\to\bigoplus_\alpha \theta(X_\alpha)$ is contained in $I$. By Lemma~\ref{LemAB5}, $K=\cup_E K_E$, and hence $I=K$. It follows indeed that \ref{ThmLem}(b) is exact.\end{proof}
 
 \begin{remark}
 The condition that $\bC$ be AB5 abelian is necessary for Theorem~\ref{ThmFlat}. Indeed, it suffices to consider the inclusions of small abelian subcategories of AB3 (or even AB4) categories. These always satisfy \ref{ThmFlat}(b), but need not be prexact. For a concrete example, take $\Ab_f^{\op}\hookrightarrow\Ab^{\op}$. Indeed, a direct limit of short exact sequences in $\Ab_f^{\op}$ is exact when considered in $\PSh(\Ab_f^{\op})$, however, it is not necessarily (left) exact in $\Ab^{\op}$, preventing exactness of $\PSh(\Ab_f^{\op})\to \Ab^{\op}$.
\end{remark}

\begin{remark}\label{RemEnd}
By taking a direct sum of $f$ with the zero morphism $X\to Y$, we can restrict to the case $X=Y$ in \ref{ThmFlat}(b).
\end{remark}

For $\bc$ an essentially small abelian category, we denote its ind-completion by $\Ind\bc$. With slight abuse of notation we will call an additive functor $\ba\to\bc$ prexact if the composite $\ba\to\bc\to\Ind\bc$ is prexact.

\begin{corollary}\label{CorFlat}
Let $\bc$ be an essentially small abelian category. An additive functor $\theta:\ba\to\bc$ is prexact if and only if for each $f:Y\to X$ in $\ba$,  there exists $g:Z\to Y$ with $f\circ g=0$ such that
the sequence
$$\theta(Z)\xrightarrow{\theta(g)}\theta(Y)\xrightarrow{\theta(f)} \theta(X)$$
is exact (acyclic) in $\bc$.
\end{corollary}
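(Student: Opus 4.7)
The plan is to reduce to Theorem~\ref{ThmFlat} applied to $\bC=\Ind\bc$, and then upgrade an infinite direct sum covering to a single morphism using the fact that objects of $\bc$ are compact (finitely presentable) in $\Ind\bc$ together with the additivity of $\ba$.

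First, recall that by the definition preceding the corollary, $\theta:\ba\to\bc$ is prexact iff $\iota\circ\theta:\ba\to\Ind\bc$ is prexact, where $\iota:\bc\hookrightarrow\Ind\bc$ denotes the fully faithful exact inclusion. Since $\bc$ is essentially small abelian, $\Ind\bc$ is a Grothendieck (hence AB5) category, so Theorem~\ref{ThmFlat} applies. Thus prexactness is equivalent to the condition that, for each $f:Y\to X$ in $\ba$, the sequence
$$\bigoplus_{g:Z\to Y,\ f\circ g=0}\theta(Z)\;\to\;\theta(Y)\;\xrightarrow{\theta(f)}\;\theta(X)$$
is exact in $\Ind\bc$. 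The backward implication is now immediate: if a single $g:Z\to Y$ with $f\circ g=0$ already yields an exact sequence $\theta(Z)\to\theta(Y)\to\theta(X)$ in $\bc$, this remains exact in $\Ind\bc$ and hence dominates the larger sequence above, giving its exactness.

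For the forward implication, assume $\theta$ is prexact and fix $f:Y\to X$ in $\ba$. Let $K\in\bc$ denote the kernel of $\theta(f)$ (computed in $\bc$, and equally in $\Ind\bc$ since $\iota$ is exact). Exactness of the big sequence means the induced morphism
$$\bigoplus_{g:Z\to Y,\ f\circ g=0}\theta(Z)\;\twoheadrightarrow\;K$$
is an epimorphism in $\Ind\bc$. Now $K\in\bc$ is a finitely presentable (compact) object of $\Ind\bc$, and an infinite direct sum $\bigoplus_\alpha M_\alpha$ in $\Ind\bc$ is the filtered colimit of its finite subcoproducts; consequently, the cokernel of the displayed map is the filtered colimit of the cokernels of its finite subcoproducts mapping to $K$. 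Since the filtered colimit is zero and each term lies in $\bc$, compactness of $0$ forces one such cokernel to vanish. Therefore finitely many morphisms $g_1,\dots,g_n$ with $f\circ g_i=0$ already give an epimorphism $\bigoplus_{i=1}^n\theta(Z_i)\twoheadrightarrow K$ in $\bc$.

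Finally, using that $\ba$ is additive, set $Z:=\bigoplus_{i=1}^n Z_i$ and let $g:Z\to Y$ be the morphism whose components are the $g_i$; then $f\circ g=0$, and additivity of $\theta$ identifies $\theta(Z)\simeq\bigoplus_i\theta(Z_i)$, whence $\theta(Z)\to\theta(Y)\to\theta(X)$ is exact in $\bc$ as desired. The main (and only nontrivial) obstacle is the compactness step that collapses an infinite family of test morphisms to a single one; everything else is bookkeeping around Theorem~\ref{ThmFlat} and the embedding $\bc\hookrightarrow\Ind\bc$.
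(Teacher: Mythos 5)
Your proof is correct and follows essentially the same route as the paper's: the backward direction is immediate from Theorem~\ref{ThmFlat}, and the forward direction uses compactness of $K=\ker\theta(f)$ in $\Ind\bc$ to cut the infinite coproduct of \ref{ThmFlat}(b) down to a finite subcoproduct, after which additivity of $\ba$ yields a single $Z\to Y$. One small wording point: the collapse of the filtered colimit should be justified by compactness of $K$ itself (e.g.\ the images $K_E\subset K$ of the finite subcoproducts form a filtered union equal to $K$, so $\id_K$ factors through some $K_E$), not by ``compactness of $0$'', which on its own implies nothing about the terms of the system.
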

\begin{proof}
If there exists $Z$ as in the corollary, then clearly the sequence in \ref{ThmFlat}(b) is exact, so prexactness of $\theta$ follows from Theorem~\ref{ThmFlat}. On the other hand if $\theta$ is prexact then the sequence in \ref{ThmFlat}(b) is exact. Moreover, since $\bc$ is an abelian subcategory of $\Ind\bc$, the kernel $K$ of $\theta(f)$ is compact in $\Ind\bc$. Hence we can restrict to a finite coproduct in $\bigoplus_{g:Z\to Y, f\circ g=0}\theta(Z)\tto K$ (note that coproducts in \ref{ThmFlat}(b) are taken in $\Ind\bc$) while retaining an epimorphism. By additivity of $\ba$ we arrive at a single $Z\to Y$.
\end{proof}

\begin{prop}\label{PropWK}
 If $\ba$ has weak kernels, then the following are equivalent conditions on an additive functor $\theta:\ba\to\bC$:
\begin{enumerate}[label=(\alph*)]
\item $\theta$ is prexact. 
\item For every morphism $f:Y\to X$, there is a weak kernel $K\to Y$ for which
$$\theta(K)\to\theta(Y)\xrightarrow{\theta(f)} \theta(X) $$
is acyclic.
\item For every morphism $f:Y\to X$, the sequence
$$\theta(K)\to\theta(Y)\xrightarrow{\theta(f)} \theta(X) $$
is acyclic for every weak kernel $K\to Y$ of $f$.
\end{enumerate}

\end{prop}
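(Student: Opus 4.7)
The plan is to prove the cycle $(a)\Rightarrow(c)\Rightarrow(b)\Rightarrow(a)$, leveraging Theorem~\ref{ThmFlat} (available since $\bC$ is AB5) for the hardest implication.

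For $(a)\Rightarrow(c)$, the key observation is that if $K\to Y$ is a weak kernel of $f:Y\to X$, then the sequence $\Yon(K)\to\Yon(Y)\to\Yon(X)$ is already acyclic in $\PSh\ba$. Indeed, the kernel of $\Yon(f)$ evaluated at any $A\in\ba$ is the set of morphisms $A\to Y$ that compose to zero with $f$, and by the weak kernel property each such morphism factors through $K\to Y$. Hence the image of $\Yon(K)\to\Yon(Y)$ equals $\ker\Yon(f)$. Applying the exact cocontinuous functor $\Theta$ (which exists by prexactness) yields acyclicity of $\theta(K)\to\theta(Y)\to\theta(X)$.

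The implication $(c)\Rightarrow(b)$ is immediate since $\ba$ has weak kernels by assumption. For $(b)\Rightarrow(a)$, I would appeal to Theorem~\ref{ThmFlat}: it suffices to show that for every $f:Y\to X$, the sequence
$$\bigoplus_{g:Z\to Y,\,f\circ g=0}\theta(Z)\;\to\;\theta(Y)\;\to\;\theta(X)$$
is acyclic in $\bC$. Let $K\to Y$ be the weak kernel from hypothesis (b). Since every $g:Z\to Y$ with $f\circ g=0$ factors through $K\to Y$, the image of the displayed left morphism coincides with the image of $\theta(K)\to\theta(Y)$, which by (b) equals $\ker\theta(f)$. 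Hence the sequence is acyclic and $\theta$ is prexact.

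There is no serious obstacle here: the only subtlety is recognising in $(a)\Rightarrow(c)$ that a weak kernel in $\ba$ is sent by Yoneda to a sequence whose image in $\PSh\ba$ is exactly the (strong) kernel, so that the exactness of $\Theta$ suffices to transfer acyclicity.
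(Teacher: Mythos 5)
Your proof is correct and follows essentially the same route as the paper: the trivial implication (c)$\Rightarrow$(b) and the reduction of (b)$\Rightarrow$(a) to Theorem~\ref{ThmFlat} via the fact that every $g:Z\to Y$ with $f\circ g=0$ factors through the chosen weak kernel are exactly the paper's argument. Your (a)$\Rightarrow$(c) step is phrased slightly differently (acyclicity of $\Yon(K)\to\Yon(Y)\to\Yon(X)$ in $\PSh\ba$ followed by the exact functor $\Theta$, rather than Theorem~\ref{ThmFlat} plus universality of the weak kernel), but this is only a cosmetic variation on the same idea.
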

\begin{proof}
Clearly (c) implies (b). That (b) implies (a) follows from Theorem~\ref{ThmFlat}. That (a) implies (c) follows again from Theorem~\ref{ThmFlat} and the universality of a weak kernel.
\end{proof}

\begin{corollary}\label{TrFlat}
\begin{enumerate}
\item Assume that $\ba$ is finitely complete ($\ba$ has kernels) and $\theta:\ba\to\bC$ is an additive functor. Then $\theta$ is prexact if and only if it is left exact (i.e. $\theta$ preserves finite limits).
\item Let $\bt$ be an essentially small triangulated category and $\theta:\bt\to\bC$ an additive functor. Then $\theta$ is prexact if and only if it is homological.
\end{enumerate}
\end{corollary}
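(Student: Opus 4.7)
The strategy is to derive both statements directly from Proposition~\ref{PropWK}, using the observation that in both settings canonical weak kernels exist: in a finitely complete category, the genuine kernel of a morphism is a weak kernel; in a triangulated category, the first morphism of a distinguished triangle $Z\xrightarrow{g} Y\xrightarrow{f} X\to Z[1]$ is a weak kernel of $f$, since $f\circ g=0$ and the long exact sequence of $\bt(-,?)$ applied to the triangle produces the required factorisation.

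For part (1), the direction prexact $\Rightarrow$ left exact I would handle by noting that $\Theta:\PSh\ba\to\bC$ is exact (in particular preserves finite limits) and that the Yoneda embedding $\Yon:\ba\to\PSh\ba$ preserves all small limits existing in $\ba$; hence the composite $\theta\simeq\Theta\circ\Yon$ preserves finite limits. For the converse, given $f:Y\to X$, left exactness yields $\theta(\ker f)=\ker\theta(f)$, so the sequence $\theta(\ker f)\to\theta(Y)\to\theta(X)$ is acyclic at $\theta(Y)$; since $\ker f\to Y$ is a weak kernel, Proposition~\ref{PropWK}(b) applies.

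For part (2), the direction homological $\Rightarrow$ prexact proceeds by completing any $f:Y\to X$ to a distinguished triangle $Z\xrightarrow{g} Y\xrightarrow{f} X\to Z[1]$; homologicality gives acyclicity of $\theta(Z)\to\theta(Y)\to\theta(X)$, and since $g$ is a weak kernel of $f$, Proposition~\ref{PropWK}(b) yields prexactness. Conversely, for any distinguished triangle $Z\xrightarrow{g} Y\xrightarrow{f} X\to Z[1]$, one applies Proposition~\ref{PropWK}(c) to the weak kernel $g$ to conclude exactness of $\theta(Z)\to\theta(Y)\to\theta(X)$, which is precisely the defining property of a homological functor (rotation-invariance of distinguished triangles means it suffices to check exactness at the middle term of every triangle).

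There is essentially no obstacle: the substantive work was carried out in Proposition~\ref{PropWK}, and each part reduces to identifying the canonical weak kernels available in the relevant setting.
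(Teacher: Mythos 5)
Your proposal is correct and follows essentially the same route as the paper: both parts are reduced to Proposition~\ref{PropWK} by identifying the canonical weak kernels (genuine kernels in the finitely complete case, the rotated triangle morphism in the triangulated case), with the prexact $\Rightarrow$ left exact direction of (1) obtained from exactness of $\Theta$ and limit-preservation of the Yoneda embedding. No gaps.
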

\begin{proof}
By definition, prexact functors are always left exact. Part (1) therefore follows easily from Proposition~\ref{PropWK}.


Now we prove part (2). By the rotation axiom of triangles, $\theta$ is homological if and only if 
$$\theta(Z)\xrightarrow{\theta(g)}\theta(Y)\xrightarrow{\theta(f)}\theta(X)$$
is acyclic for every distinguished triangle
$$ Z\xrightarrow{g} Y\xrightarrow{f} X\,\to\, Z[1].$$

Since $g$ is a weak kernel of $f$, and since every morphism can be completed to a distinguished triangle, \ref{PropWK} (b)$\Rightarrow$(a) shows that homological functors are prexact. Similarly, \ref{PropWK} (a)$\Rightarrow$(c) shows that prexact functors are homological.
\end{proof}



\begin{remark}
 Corollary~\ref{TrFlat}(2) can alternatively be derived from Freyd's universal homological functor, see~\cite{Freyd2, Verdier}.

\end{remark}

We conclude this section by applying the above results to obtain criteria for flatness.

\begin{corollary}\label{CorCorCor}
The following conditions are equivalent on an additive functor $u:\ba\to\bb$.
\begin{enumerate}[label=(\alph*)]
\item $u$ is flat.
\item For every pair of a morphism $f:Y\to X$ in $\ba$ and a morphism $b:B\to u(Y)$ in $\bb$ with $u(f)\circ b=0$, there exist morphisms $g:Z\to Y$ with $f\circ g=0$ and $h:B\to u(Z)$ such that $b=u(g)\circ h$.
\item For every object $B\in\bb$, the comma category $B/u$, with objects given by the morphisms $\{B\to u(A)\,|\, A\in\ba\}$, is cofiltered.
\end{enumerate}
\end{corollary}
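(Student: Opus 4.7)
The plan is to derive both equivalences from Theorem~\ref{ThmFlat} applied inside the AB5 presheaf category $\PSh\bb$, together with additivity of $\ba$, $\bb$ and $u$.

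For (a)$\Leftrightarrow$(b), by Definition~\ref{DefFlat} flatness of $u$ is the same as prexactness of $\Yon\circ u:\ba\to\PSh\bb$. Since $\PSh\bb$ is AB5, Theorem~\ref{ThmFlat} rephrases this as exactness, for every $f:Y\to X$ in $\ba$, of
$$\bigoplus_{g:Z\to Y,\, f\circ g=0}\Yon(u(Z))\;\to\;\Yon(u(Y))\;\to\;\Yon(u(X)).$$
Evaluating at $B\in\bb$, this exactness in $\Ab$ asserts precisely that every $b\in\bb(B,u(Y))$ with $u(f)\circ b=0$ decomposes as a finite sum $b=\sum_{i}u(g_i)\circ h_i$ with $f\circ g_i=0$; since $u$ preserves finite direct sums, this collapses to a single factorization $b=u(g)\circ h$ with $g:\bigoplus_i Z_i\to Y$, which is condition (b). Conversely, such a factorization visibly produces a preimage in the relevant direct sum.

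For (b)$\Leftrightarrow$(c), I would verify the three standard cofilteredness axioms for $B/u$ in turn. Nonemptiness is free via the zero morphism $B\to u(0)=0$. A common lower bound of two objects $(A,b)$ and $(A',b')$ is furnished by $(b,b'):B\to u(A)\oplus u(A')=u(A\oplus A')$ together with the projections from $A\oplus A'$ in $\ba$. The only nontrivial axiom -- equalizing any parallel pair $a_1,a_2:(A,b)\to(A',b')$ by a common precomposition -- translates, via the substitution $f:=a_1-a_2$, precisely into condition (b): the hypothesis $u(a_1)\circ b=b'=u(a_2)\circ b$ becomes $u(f)\circ b=0$, while the coequalizing morphism is exactly an object $(Z,h)$ of $B/u$ together with $g:Z\to A$ satisfying $f\circ g=0$ and $b=u(g)\circ h$.

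There is no substantial obstacle here; the only delicate points are the two additivity reductions above -- combining a finite sum of factorizations into a single direct-sum factorization, and passing between equalization in $B/u$ and the vanishing of a difference -- both of which rely on the standing additivity of $\ba$, $\bb$ and $u$.
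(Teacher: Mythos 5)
Your proof is correct and follows exactly the route the paper intends (the corollary is stated there without proof, as a direct application of Theorem~\ref{ThmFlat} to the prexact functor $\Yon\circ u:\ba\to\PSh\bb$, evaluated objectwise at $B\in\bb$). Both delicate points you flag — collapsing a finite sum of factorizations into one via $Z=\bigoplus_i Z_i$, and reducing equalization of a parallel pair to the vanishing of $u(a_1-a_2)\circ b$ (with the pair $(f,0)$ for the converse) — are handled correctly.
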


\begin{remark}By Theorem~\ref{ThmFlat}, a functor $\theta:\ba\to \Ab$ is prexact if and only if the category of elements of $\theta$ is cofiltered. The latter category is equal to the comma category $\mZ/\theta$. 
By Corollary~\ref{CorCorCor}, this demonstrates the (easily directly verifiable) fact that a flat functor to $\Ab_f$ is prexact. The converse is not true: the inclusion $\ba\hookrightarrow \Ab_f$ of the full additive subcategory generated by the cyclic group of order 4 is prexact, but not flat.
\end{remark}

\begin{corollary}\label{CorWKFlat}
Assume that $\ba$ has weak kernels. The following conditions are equivalent on an additive functor $u:\ba\to\bb$.
\begin{enumerate}[label=(\alph*)]
\item $u$ is flat. 
\item For every morphism $f:Y\to X$, there is a weak kernel $K\to Y$ for which
$u(K)$ is a weak kernel of $u(f)$.
\item $u$ sends all weak kernels to weak kernels.
\end{enumerate}
\end{corollary}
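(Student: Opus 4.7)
The plan is to reduce Corollary~\ref{CorWKFlat} to Proposition~\ref{PropWK} by applying the latter to the composite functor
$$\theta \;:=\; \Yon \circ u \,:\, \ba \longrightarrow \PSh\bb.$$
By Definition~\ref{DefFlat}(2), condition (a) is equivalent to prexactness of $\theta$. Since $\PSh\bb$ is a presheaf category valued in $\Ab$, it is AB5, so Proposition~\ref{PropWK} applies and characterises prexactness of $\theta$ in terms of acyclicity of the sequence $\Yon(u(K))\to \Yon(u(Y))\to \Yon(u(X))$ in $\PSh\bb$ for some (respectively every) weak kernel $K\to Y$ of $f:Y\to X$ in $\ba$.

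The remaining task is to translate this acyclicity condition in the presheaf category into the weak kernel condition in $\bb$. By evaluating at each $B\in\bb$, acyclicity in $\PSh\bb$ of
$$\Yon(u(K))\to \Yon(u(Y))\to \Yon(u(X))$$
is equivalent to acyclicity in $\Ab$ of
$$\bb(B,u(K))\to \bb(B,u(Y))\to \bb(B,u(X))$$
for every $B\in\bb$. The composition vanishes automatically (it is the image under $u$ of $f\circ g=0$), and the kernel-equals-image condition then says precisely that any $b:B\to u(Y)$ with $u(f)\circ b=0$ factors through $u(g):u(K)\to u(Y)$. That is the universal property making $u(K)\to u(Y)$ a weak kernel of $u(f)$. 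Hence conditions (b) and (c) of the corollary correspond bijectively to conditions (b) and (c) of Proposition~\ref{PropWK} applied to $\theta$, and the equivalence follows.

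No serious obstacle is expected; the argument is essentially bookkeeping once one observes that the Yoneda embedding translates weak kernels in $\bb$ into acyclic sequences in $\PSh\bb$. The only point that deserves an explicit sentence is that $\PSh\bb$ is AB5, so that Proposition~\ref{PropWK} is legitimately available in that setting.
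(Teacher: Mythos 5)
Your argument is correct and is exactly the intended one: the paper leaves Corollary~\ref{CorWKFlat} unproved as an application of "the above results," and the application is precisely your reduction, via Definition~\ref{DefFlat}(2), to Proposition~\ref{PropWK} for $\theta=\Yon\circ u$ with $\bC=\PSh\bb$ (which is AB5), together with the pointwise translation of acyclicity of $\Yon(u(K))\to\Yon(u(Y))\to\Yon(u(X))$ into the weak kernel property of $u(K)\to u(Y)$.
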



\section{The kernel category and homological topologies}\label{SecHT}
Let $\ba$ be an essentially small additive category.
\subsection{Some categories with weak kernels}

\subsubsection{}\label{secdiag}We will work extensively with the following categories and functors:
$$\xymatrix{ 
\ba\ar[r]&K_+^b\ba\ar[r]\ar[rd]& K^b\ba\\
&&\Noy\ba.
}$$
Here $K^b\ba$ is the bounded homotopy category of $\ba$ (the quotient of the category of bounded complexes by the chain homotopy relation) and $K_+^b\ba$ is its full subcategory of complexes $(\cX,d)$ for which $\cX^i=0$ whenever $i<0$. The functor $\ba\to K_+^b\ba$ is the obvious embedding of $\ba$ as the full subcategory of complexes contained in degree $0$. We use the same notation for an object $X\in\ba$ interpreted in $\ba$ or as a chain complex contained in degree $0$. 
We take the convention that the shift functor $[1]$ on $K^b\ba$ acts as
$$(\cX[1])^i\;=\;\cX^{i+1},$$ 
in particular $X[i]$ for $X\in\ba$ is a chain complex contained in degree $-i$.


\begin{definition}
The category $\Noy\ba$ has as objects morphisms in $\ba$. For morphisms $f:X^0\to X^1$ and $g:Y^0\to Y^1$ in $\ba$, the set $\Noy\ba(f,g)$ consists of the equivalence classes of morphisms $\alpha: X^0\to Y^0$ for which $g\circ \alpha$ factors via $f$ 
\begin{equation}\label{ExNoy}\xymatrix{
X^0\ar[rr]^f\ar[d]^{\alpha}&&X^1\ar@{-->}[d]\\
Y^0\ar[rr]^g&&Y^1,\\
}\end{equation}
where $\alpha$ and $\alpha'$ are equivalent if $\alpha-\alpha'$ factors via $f$. Denote by
$$\tN :\ba\to\Noy\ba$$
the fully faithful functor which sends $A\in\ba$ to $A\to 0$.
\end{definition}

Composition in $\Noy\ba$ is defined in the obvious way and $\Noy\ba$ is additive. We can also define the group $\Noy\ba(f,g)$ via the following diagram:
$$\xymatrix{
\ba(X^0,Y^0)\ar[rr]^{\phi_1=(g\circ-)}&&\ba(X^0,Y^1)&& \\
\ba(X^1,Y^0)\ar[u]^{\phi_2=(-\circ f)}&&\ba(X^1,Y^1)\ar[u]^{\phi_3=(-\circ f)}&& \Noy\ba(f,g)=\phi_1^{-1}(\im\phi_3)/\im\phi_2.
}$$
For example, for morphisms $f,g$ and an object $A$ in $\ba$, we have
\begin{equation}\label{EqNAf}
\Noy\ba(f,\tN A)=\coker\,\ba(f,A)\qquad\mbox{and}\qquad \Noy\ba(\tN A,g)=\ker\ba(A,g).
\end{equation}

The functor $K_+^b\ba\to\Noy\ba$ in the diagram in \ref{secdiag} sends $(\cX,d)$ to $d^0: \cX^0\to \cX^1$ and a class of chain maps with representative $a:\cX\to \cY$ to the class of morphisms in $\Noy\ba$ corresponding to $a^0:\cX^0\to\cY^0$.

The following proposition summarises how $\Noy\ba$ corresponds to several categories in the literature, for instance in~\cite{Be, Freyd1}. The equivalence between the categories in (2) and (3) is given in \cite[Corollary~3.9(i)]{Be}.
\begin{prop}
The category $\Noy\ba$ is equivalent to each of the following three categories.
\begin{enumerate}
\item The quotient of the subcategory of $K^b\ba$ of complexes contained in degree $0,1$ with respect to the full subcategory of complexes in degree $1$.
\item The quotient of the arrow category $\Arr\ba$ with respect to its full subcategory of retracts $i: A\to B$ in $\ba$.
\item The category $(\fp \ba)^{\op}$, where $\fp\ba$ stands for the category of finitely presented functors in the module category $[\ba,\Ab]$.
\end{enumerate}
\end{prop}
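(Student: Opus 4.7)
The plan for each of (1)--(3) is to construct an identity-on-objects functor into $\Noy\ba$ and verify that it is fully faithful; in all three cases objects correspond naturally to morphisms of $\ba$, so essential surjectivity is automatic. For (1), I would restrict the functor $K_+^b\ba\to\Noy\ba$ from the diagram in \ref{secdiag} to complexes concentrated in degrees $0$ and $1$. Any chain map factoring through a complex of the form $(0\to Z^1)$ has zero degree-$0$ component, hence lands in $0\in\Noy\ba(f,g)$, so the restricted functor descends to the claimed quotient. Fullness is immediate: any class $[\alpha^0]\in\Noy\ba(f,g)$ by definition admits $\alpha^1$ with $g\alpha^0=\alpha^1 f$, and $(\alpha^0,\alpha^1)$ lifts it. For faithfulness, if $\alpha^0=sf$ for some $s\colon X^1\to Y^0$, subtracting from $(\alpha^0,\alpha^1)$ the null-homotopic chain map $(sf,gs)$ yields an equivalent chain map $(0,\tilde\alpha)$ whose chain-map condition forces $\tilde\alpha f=0$; this then factors as $(X^0\xrightarrow{f} X^1)\to(0\to Y^1)\to(Y^0\xrightarrow{g} Y^1)$ with degree-$1$ components $\tilde\alpha$ and $1_{Y^1}$, hence vanishes in the quotient.

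For (2), I would define $\Arr\ba\to\Noy\ba$ by $(\alpha^0,\alpha^1)\mapsto[\alpha^0]$, well defined since $g\alpha^0=\alpha^1 f$ places $\alpha^0$ in $\phi_1^{-1}(\im\phi_3)$; fullness proceeds as in (1). The central step, and the main technical input, is the characterisation: a square $(\alpha^0,\alpha^1)\colon f\to g$ factors in $\Arr\ba$ through a split monomorphism $i\colon A\to B$ if and only if $\alpha^0=sf$ for some $s\colon X^1\to Y^0$. The forward direction composes the given factorization with a retraction of $i$ to extract $s$. For the converse I would take $i\colon Y^0\to Y^0\oplus Y^1$ with components $(1_{Y^0},g)$ (a split monomorphism with retraction onto the first summand), factoring the square as $f\xrightarrow{(sf,(s,\alpha^1))} i \xrightarrow{(1_{Y^0},\mathrm{pr}_2)} g$; a direct check shows both are legitimate $\Arr\ba$-morphisms and their composite is $(\alpha^0,\alpha^1)$.

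For (3), I would send $f\colon X^0\to X^1$ to $F_f:=\coker(\ba(f,-))\in\fp\ba$; essential surjectivity is built into the definition of $\fp\ba$. Using projectivity of representables and Yoneda, $\Hom_{\fp\ba}(F_g,F_f)$ embeds in $F_f(Y^0)=\ba(X^0,Y^0)/\{hf:h\colon X^1\to Y^0\}$, and the additional condition that a class $[\alpha]$ extend from $\ba(Y^0,-)$ to all of $F_g$ is, by evaluation at $Y^1$, exactly that $g\alpha$ factor through $f$. Since $F_{(-)}$ is contravariant this yields $\Noy\ba\simeq(\fp\ba)^{\op}$. Among the three parts, the retract-factorization in (2) is the main obstacle; (1) and (3) reduce to essentially formal applications of chain-homotopy and Yoneda arguments once the definition of $\Noy\ba(f,g)$ has been unwound.
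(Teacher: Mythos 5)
Your proof is correct. Note that the paper itself offers no argument for this proposition: it presents it as a summary of known facts and simply cites Beligiannis (Corollary~3.9(i) of [Be]) for the equivalence between (2) and (3), leaving (1) implicit. Your direct, self-contained verification is therefore a legitimate alternative to the citation. The individual steps all check out: in (1), the key point that a homotopy class factoring through a degree-$1$ complex is precisely one whose degree-$0$ component is of the form $sf$, together with your explicit factorisation of $(0,\tilde\alpha)$ through $(0\to Y^1)$, correctly identifies the kernel of the degree-$0$ projection with the ideal being quotiented; in (2), your characterisation of the squares factoring through split monomorphisms via the object $(1_{Y^0},g)\colon Y^0\to Y^0\oplus Y^1$ is exactly the right device (the verification $gsf=g\alpha^0=\alpha^1 f$ makes the first square commute); and in (3), the computation $\Hom_{\fp\ba}(F_g,F_f)=\ker\bigl(F_f(Y^0)\to F_f(Y^1)\bigr)$ via the presentation of $F_g$ recovers the paper's definition of $\Noy\ba(f,g)$ on the nose, with the variance accounting for the $(-)^{\op}$. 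What your approach buys is transparency — in particular part (2), which the paper outsources entirely, is reduced to a two-line factorisation — at the cost of a page of routine checking that the paper avoids by reference.
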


\begin{example}\label{ExNoyA}
If $\ba$ is the additive closure of the one object category of a finite-dimensional algebra $A$ over a field $k$, then $\Noy\ba$ is the category of finite-dimensional right $A$-modules.
\end{example}


 As observed in \cite{Be,Freyd1} the category $\Noy\ba$ has kernels, as it is obtained from $\ba$ by freely adjoining kernels.
 \begin{lemma}\label{LemK+K}
 \begin{enumerate}
\item The categories $K_+^b\ba$ and $K^b\ba$ have weak kernels.
\item The inclusion $K_+^b\ba\to K^b\ba$ is flat.
\item  The category $\Noy\ba$ has kernels and $K^b_+\ba\to \Noy\ba$ is flat.
\end{enumerate}
 \end{lemma}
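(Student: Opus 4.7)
The plan is to handle the three parts in order, exploiting the mapping cone construction and then invoking Corollary~\ref{CorWKFlat}, which reduces flatness of a functor whose source has weak kernels to the preservation of one weak kernel per morphism.

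\textbf{Step 1 (weak kernels).} For a morphism $f:\cX\to\cY$ in $K^b\ba$, I would write down the cone $\Cone(f)$ with $\Cone(f)^i=\cX^{i+1}\oplus\cY^i$ and the usual differential $\bigl(\begin{smallmatrix}-d_\cX&0\\ f&d_\cY\end{smallmatrix}\bigr)$, and show that the natural map $\Cone(f)[-1]\to\cX$ is a weak kernel of $f$. The verification is standard: given $a:\cW\to\cX$ with $f\circ a\sim 0$, a homotopy yields the factorisation through $\Cone(f)[-1]$, where uniqueness fails only up to homotopy, which is exactly the weak kernel property. With the sign/shift convention of \S\ref{secdiag}, the complex $\Cone(f)[-1]$ has $i$-th term $\cX^i\oplus\cY^{i-1}$; if $\cX,\cY\in K^b_+\ba$ these vanish for $i<0$, so $\Cone(f)[-1]\in K^b_+\ba$. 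This settles (1), with the same $\Cone(f)[-1]$ serving as a weak kernel in both categories.

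\textbf{Step 2 (flatness of $K^b_+\ba\to K^b\ba$).} Since (1) provides weak kernels in $K^b_+\ba$, Corollary~\ref{CorWKFlat} applies. The observation of Step~1 that the weak kernel $\Cone(f)[-1]$ of $f\in K^b_+\ba$ computed inside $K^b_+\ba$ coincides with its weak kernel computed inside $K^b\ba$ is precisely condition~(b) of Corollary~\ref{CorWKFlat}, giving flatness.

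\textbf{Step 3 (the category $\Noy\ba$).} The existence of kernels in $\Noy\ba$ I would simply cite from \cite{Be, Freyd1} (together with the equivalence of $\Noy\ba$ with $(\fp\ba)^{\op}$ already recorded just above the lemma). For flatness of $q:K^b_+\ba\to\Noy\ba$, I again invoke Corollary~\ref{CorWKFlat}: it suffices to show that $q$ sends the specific weak kernel $\Cone(f)[-1]\to\cX$ from Step~1 to a (weak) kernel in $\Noy\ba$. Unwinding, the morphism $q(f)$ is the class of $f^0:\cX^0\to\cY^0$ regarded as a morphism from $d_\cX^0$ to $d_\cY^0$, and $q(\Cone(f)[-1])$ is the object $d_\cZ^0=(d_\cX^0,f^0)^t:\cX^0\to \cX^1\oplus \cY^0$. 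The kernel of $q(f)$ in $\Noy\ba$ can be computed directly from its definition as the coequaliser/comma description in \eqref{EqNAf}: a morphism from $(g:A^0\to A^1)$ to $d_\cX^0$ given by $\alpha:A^0\to \cX^0$ is killed by $q(f)$ precisely when $f^0\circ\alpha$ factors through $g$, i.e.\ when the pair $(\alpha,-h)$ defines a map to the object $(d_\cX^0,f^0)^t$. This exhibits $q(\Cone(f)[-1])$ as the kernel of $q(f)$, completing the flatness check.

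\textbf{Anticipated difficulty.} Steps 1 and 2 are purely formal once the sign conventions are fixed, so the only real bookkeeping is in Step~3: verifying that the cone, seen through the forgetful functor to $\Noy\ba$, reproduces exactly the cokernel/kernel presentation of morphisms in $\Noy\ba$. Rather than computing in $\Noy\ba$ by hand it may be cleaner to use the identification $\Noy\ba\simeq(\fp\ba)^{\op}$: the functor $q$ then becomes $\cX\mapsto \coker(\ba(-,\cX^0)\to\ba(-,\cX^1))^{\op}$, under which preservation of cones becomes a routine diagram computation in the abelian category $[\ba,\Ab]$.
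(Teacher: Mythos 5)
Your proposal is correct and follows essentially the same route as the paper: weak kernels via $\Cone(f)[-1]$, the observation that this stays in $K^b_+\ba$, and Corollary~\ref{CorWKFlat} for both flatness claims, with the kernel in $\Noy\ba$ of a morphism represented by $\alpha$ computed as $(d^0_{\cX},f^0)^t:\cX^0\to\cX^1\oplus\cY^0$ exactly as in the paper. The only difference is that you spell out the verification in Step 3 in more detail than the paper, which simply asserts it.
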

 \begin{proof}
 Since $K^b\ba$ is triangulated it has weak kernels. Concretely, a weak kernel of $f:\cX\to \cY$ is given by $\Cone(f)[-1]\to\cX$. Moreover, if $\cX,\cY\in K_+^b\ba$, then $\Cone(f)[-1]\in K_+^b\ba$. Consequently, also $K_+^b\ba$ has weak kernels. This proves part (1) and, by Corollary~\ref{CorWKFlat}, also part (2).
 
It follows easily that for a morphism in $\Noy\ba$ represented by $\alpha$ as in \eqref{ExNoy},
the kernel is given by $(f,\alpha): X^0\to X^1\oplus Y^0$. That $K^b_+\ba\to \Noy\ba$ is flat follows from Corollary~\ref{CorWKFlat}.
\end{proof}

\begin{remark}
It is clear that $\tN :\ba\to \Noy\ba$ does not preserve the kernels that might already exist in $\ba$. 
In particular, when $\ba$ admits kernels it does {\bf not} follow that $\ba\simeq\Noy\ba$.
However, as proved in \cite[Lemma~3.3]{Be}, $\tN $ does preserve all cokernels that exist in $\ba$.
\end{remark}

\subsubsection{}Let $\bC$ be an abelian category. For an additive functor $\theta:\ba\to\bC$, we use the same notation for the associated functor $K^b\ba\to K^b\bC$ and we will introduce the following functors
$$\xymatrix{ 
\ba\ar[r]&K_+^b\ba\ar[r]\ar[rd]& K^b\ba\ar[r]^{\theta_{\Delta}^{\mZ}} \ar[rd]^{\theta^0_{\Delta}}&\bC^{\mZ}\\
&&\Noy\ba\ar[r]_{\vec{\theta}}& \bC.
}$$

We let $\vec{\theta}:\Noy(\ba)\to \bC$ be the functor
$$f\,\mapsto\, \ker \theta(f),$$
for an arbitrary morphism $f$ in $\ba$ interpreted as an object in $\Noy\ba$.
Similarly, $\theta_\Delta^0: K^b\ba\to\bC$  is the functor
$$(\cX,d)\,\mapsto\, H^0(\theta \cX)= \ker\theta(d^0)/\im\theta(d^{-1}).$$
The functor $\theta_{\Delta}^{\mZ}:K^b\ba\to\bC^{\mZ}$ is defined as  
$$(\cX,d)\,\mapsto\, \bigoplus_{i\in\mZ}H^i(\theta \cX)\langle i\rangle.$$
Finally, we will write $\theta^0_+$ for the functor $K_+^b\ba\to\bC$ which can be obtained by composition in two equivalent ways in the above diagram, meaning the functor
$$(\cX,d)\,\mapsto\, \ker\theta(d^0).$$

These functors have a number of universal properties. Part (1) below is essentially~\cite[Corollary~3.2]{Be}.
\begin{theorem}\label{Thm3Way}
Fix an abelian category $\bC$.
\begin{enumerate}
\item Composition with $\tN :\ba\to \Noy\ba$ yields an equivalence (with inverse $\theta\mapsto \vec{\theta}$)
$$[\Noy\ba,\bC]_{lex}\;\stackrel{\sim}{\to}\;[\ba,\bC].$$
\item Composition with $\ba\to K^b_+\ba$ yields an equivalence (with inverse $\theta\mapsto \theta^0_+$)
$$[K^b_+\ba,\bC]_{wker,0}\;\stackrel{\sim}{\to}\;[\ba,\bC]$$
where $[K^b_+\ba,\bC]_{wker,0}$ stands for the category of functors $\phi:K^b_+\ba\to\bC$ which send weak kernels to acyclic sequences and satisfy one of the following equivalent additional conditions:
\begin{enumerate}
\item $\phi(X[i])=0$ for all $X\in\ba$ and $i<0$;
\item $\phi(\cX[-1])=0$ for all $\cX\in K^b_+\ba$.
\end{enumerate}
 \item 
Composition with $\ba\to K^b\ba$ yields an equivalence (with inverse $\theta\mapsto \theta_\Delta^0$)
$$[K^b\ba,\bC]_{hom,0}\;\stackrel{\sim}{\to}\; [\ba,\bC],$$
where $[K^b\ba,\bC]_{hom,0}$ stands for the category of homological functors $\phi:K^b\ba\to\bC$  which satisfy one of the following equivalent additional conditions:
\begin{enumerate}
\item $\phi(X[i])=0$ for all $X\in\ba$ and $i\not=0$;
\item $\phi(\cX)=0$ for all $\cX\in K^b\ba$ with $\cX^0=0$.
\end{enumerate}

\end{enumerate}
If $\bC$ is an AB5 category, the respective homological conditions in the left-hand sides of each equivalence can be replaced by the condition of being prexact.
 
\end{theorem}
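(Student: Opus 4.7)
My plan is to prove the three parts in sequence, using the flat functors $K^b_+\ba\to\Noy\ba$ and $K^b_+\ba\hookrightarrow K^b\ba$ from Lemma~\ref{LemK+K} as bridges. Part (1) is essentially \cite[Corollary~3.2]{Be}: the inverse to composition with $\tN$ is $\theta\mapsto\vec{\theta}$ with $\vec{\theta}(f)=\ker\theta(f)$, and this is well-defined because in $\Noy\ba$ (which has kernels by Lemma~\ref{LemK+K}(3)) the object $f:X^0\to X^1$ is literally the kernel of $\tN(f):\tN X^0\to\tN X^1$. Hence any left exact $\phi:\Noy\ba\to\bC$ satisfies $\phi(f)\simeq \ker(\phi\tN X^0\to\phi\tN X^1)=\vec{\theta}(f)$ for $\theta:=\phi\circ\tN$, so $\phi$ is determined up to canonical isomorphism by its restriction to $\ba$, and naturality on morphisms follows from left exactness.

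For part (2), I would observe that $\theta^0_+$ factors as $K^b_+\ba\to\Noy\ba\xrightarrow{\vec{\theta}}\bC$. Flatness of $K^b_+\ba\to\Noy\ba$ combined with Corollary~\ref{CorWKFlat} sends weak kernels to weak kernels; since $\Noy\ba$ has actual kernels, every such weak kernel retracts onto the kernel, so precomposition with the left exact $\vec{\theta}$ sends weak kernel sequences to acyclic ones. Conditions (a) and (b) are immediate on the formula $\cX\mapsto\ker\theta(d^0)$. For the converse, given $\phi\in[K^b_+\ba,\bC]_{wker,0}$ and $\theta:=\phi|_{\ba}$, I would reconstruct $\phi(\cX)$ for $\cX\in K^b_+\ba$ from the two-term brutal truncation $\tau^{[0,1]}\cX=(\cX^0\xrightarrow{d^0}\cX^1)$. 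Applying $\phi$ to the weak kernel sequences
\[
\cX^1[-1]\to\tau^{[0,1]}\cX\to\cX^0,\qquad \tau^{[0,1]}\cX\to\cX^0\xrightarrow{d^0}\cX^1
\]
in $K^b_+\ba$ (coming from short exact sequences of complexes, and genuine weak kernels in $K^b_+\ba$ by flatness of $K^b_+\ba\hookrightarrow K^b\ba$), condition (a) kills $\phi(\cX^1[-1])$ and forces $\phi(\tau^{[0,1]}\cX)\simeq\ker\theta(d^0)$. The triangle $\sigma_{\geq 2}\cX\to\cX\to\tau^{[0,1]}\cX\to(\sigma_{\geq 2}\cX)[1]$ then yields $\phi(\cX)\simeq\phi(\tau^{[0,1]}\cX)\simeq\theta^0_+(\cX)$, since (b) forces both outer $\phi$-terms to vanish. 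A parallel truncation argument yields (a)$\Leftrightarrow$(b).

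Part (3) proceeds analogously. That $\theta^0_\Delta$ is homological and satisfies (a) is standard. For the converse, a homological $\phi:K^b\ba\to\bC$ satisfying (a) restricts to an object of $[K^b_+\ba,\bC]_{wker,0}$ (triangles furnish weak kernels in $K^b_+\ba$, and homological functors are acyclic on them), so $\phi|_{K^b_+\ba}=\theta^0_+$ by part~(2). For general $\cX\in K^b\ba$, an induction on the length of the negative tail via the triangle $\sigma_{\geq 0}\cX\to\cX\to\sigma_{<0}\cX\to(\sigma_{\geq 0}\cX)[1]$ and its long exact sequence (where (b) supplies $\phi(\sigma_{<0}\cX)=0$) yields $\phi(\cX)\simeq\coker(\theta(\cX^{-1})\to\ker\theta(d^0))=H^0(\theta\cX)$, once the connecting morphism is identified with the natural map induced by $\theta(d^{-1})$. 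The AB5 refinement follows from Proposition~\ref{PropWK} (prexact $\Leftrightarrow$ weak-kernel-to-acyclic for categories with weak kernels) and Corollary~\ref{TrFlat}(2) (prexact $\Leftrightarrow$ homological for triangulated sources). I expect the main obstacle to be pinning down the connecting morphisms in the long exact sequences as the natural maps induced by the differentials $\theta(d^i)$, which requires careful manipulation of triangle rotations and naturality squares.
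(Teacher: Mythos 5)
Your proposal is correct and follows essentially the same strategy as the paper: part (1) via the observation that $f$ is the kernel of $\tN(f)$ in $\Noy\ba$, part (2) via the two-term truncation $(\cX^0\to\cX^1)$ and the weak-kernel sequences coming from cones (which are weak kernels in $K^b_+\ba$ by the proof of Lemma~\ref{LemK+K}, not by flatness of the inclusion), and part (3) via truncation triangles, the vanishing conditions (a)/(b), and induction on length, with the AB5 addendum from Proposition~\ref{PropWK} and Corollary~\ref{TrFlat}. The only organisational difference is that you derive part (3) by first restricting to $K^b_+\ba$ and invoking part (2), whereas the paper runs a single diagram chase with naive truncations; the remaining work you flag (identifying connecting maps and checking naturality, which requires lifting morphisms to the category of complexes since truncation is not functorial on $K^b\ba$) is exactly what the paper's proof supplies.
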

\begin{proof}
The interpretation in terms of prexact functors in case $\bC$ is AB5 follows from Proposition~\ref{PropWK} and Corollary~\ref{TrFlat}.

We start by proving part (1). It is readily verified that $\vec{\theta}$ is left exact, for $\theta:\ba\to\bC$.
It is then also clear that $\theta\mapsto \vec{\theta}$ yields a left inverse to $\phi\mapsto \phi\circ \tN $. It thus suffices to show that for every $\phi\in [\Noy\ba,\bC]_{lex}$, we have $\phi\simeq \overrightarrow{\phi\circ \tN }$ functorially.
A suitable isomorphism $\phi\Rightarrow \overrightarrow{\phi\circ \tN }$ follows from the sequence
$$0\to\phi(X^0\xrightarrow{f}X^1)\to \phi(\tN X^0)\to \phi(\tN X^1),$$
which is exact by left exactness of $\phi$.

Equivalence of (a) and (b) in part (2) can be proved by induction on the length of the complex. That $\theta^0_+$ maps one particular weak kernel (for a given morphism) to an acyclic sequence is obvious, from which the property follows for all weak kernels. That the restriction of $\theta^0_+$ to $\ba$ gives back $\theta$ is obvious, so we just need to prove that there is a natural isomorphism $\phi(\cX,d)\xrightarrow{\sim}\ker \phi(d^0)$. Firstly, we consider the canonical morphism $x:\cX\to (\cX^0\to\cX^1)$ for $\cX\in K_+^b\ba$. It is a weak kernel of a morphism to an object which $\phi$ sends to zero, hence  $\phi(\cX)\to \phi(\cX^0\to \cX^1)$ is an epimorphism. Moreover, there is a weak kernel of $x$ which is also sent to zero by $\phi$, which implies that $\phi(\cX)\to \phi(\cX^0\to \cX^1)$ is a monomorphism and hence an isomorphism. The same type of reasoning then shows that
$$\phi(\cX^1[-1])=0\to \phi(\cX^0\to \cX^1)\to \phi(\cX^0)\to \phi(\cX^1)$$
is exact, completing the proof.

 can be proved by a simplified version of the argument for part (3) below.

Finally we prove part (3). That for homological functors condition (a) implies (b) follows by induction on the length of a complex. Composition in one direction of the proposed inverses is obviously isomorphic to the identity. We claim the composition in the other direction is also the identity. Take therefore $\phi\in [K^b\ba,\bC]_{hom,0}$ and consider the following commutative diagram, for any $(\cX,d)\in K^b\ba$:
$$\xymatrix{
&&0&0\ar[d]\\
0\ar[r]&\phi(\cX)\ar[r]&\phi(\tau^{\le 0}\cX)\ar[r]\ar[u]&\phi((\tau^{>0}\cX)[1])\ar[d]\\
&\phi(\cX^{-1})\ar[r]^{\phi(d^{-1})}&\phi(\cX^{0})\ar[r]^{\phi(d^0)}\ar[u]& \phi(\cX^{1})\ar[d]\\
&&\phi((\tau^{<0}\cX)[-1])\ar[u]&\phi((\tau^{>1}\cX)[2]).
}$$
We used notation as $\tau^{\le i}\cX$ to denote the naive truncations of the complex $\cX$.
The first row is exact, by the distinguished triangle $\tau^{>0}\cX\to\cX\to \tau^{\le 0}\cX$ and the fact that $\phi(\tau^{>0}\cX)=0$ by assumption (b). The two columns are similarly exact. By exactness and commutativity, it follows that the morphism from the kernel of $\phi(\cX^{0})\to\phi(\cX^{1})$ to $\phi(\tau^{\le 0}\cX)$ factors via $\phi(\cX)$. This morphism restricts to zero on the image of $\phi(\cX^{-1})\to\phi(\cX^{0})$ since the morphism $\cX^{-1}\to \tau^{\le 0}\cX$ is nullhomotopic (so zero in $K^b\ba$).
This yields a morphism $(\phi|_{\ba})^0_\Delta(\cX)\to \phi(\cX)$. Now consider a morphism $\cX\to\cY$ in $K^b\ba$. We choose some lift in the category of complexes (since $\tau^{\le 0}$ is not a functor on $K^b\ba$) which allows us to construct a commutative diagram combining the above diagram with the one for $\cY$. This demonstrates that the morphism $(\phi|_{\ba})^0_\Delta(\cX)\to \phi(\cX)$ is natural in $\cX$.

That this natural transformation is an isomorphism can be proved again by induction on the length of complexes and the 5-lemma.
\end{proof}

\subsection{Multi-representability}

In this section we define homological topologies on $\ba$ simply as Grothendieck topologies on $\Noy\ba$. The justification for introducing a new term stems from the variety of alternative realisations of this set in Theorem~\ref{ThmAlternative} below.
\begin{definition}${}$\begin{enumerate}
\item
The set of {\bf homological topologies on $\ba$} is $\HTop(\ba):=\Top(\Noy\ba)$.
\item For a functor $\theta:\ba\to\bC$ to an AB5 category $\bC$, its homological topology $\HT_{\theta}\in\HTop(\ba)$ is given by $\cT_{\vec\theta}\in\Top(\Noy\ba)$.
\item For a homological topology $\cR\in\HTop(\ba)$, we write
$$\tHZ:=\tZ\circ\tN\,:\, \ba\to\Sh(\Noy\ba,\cR)=:\HSh(\ba,\cR).$$
\end{enumerate}
\end{definition}

Note that, since $\vec\theta$ is prexact, see Theorem~\ref{Thm3Way}, the notation $\cT_{\vec\theta}$ from \ref{Ttheta} is justified.

Now we extend Theorem~\ref{PropFlatRep} from prexact functors to all additive functors. Denote by $\ABf$ the 2-full 2-subcategory of $\ABt$, from~\ref{Def2CatGro}, consisting of AB5 categories. 

\begin{theorem}\label{ThmUniHSh}
\begin{enumerate}
\item The 2-functor $[\ba,-]:\ABf\to\Cat$ decomposes as $$[\ba,-]=\coprod_{\cR\in\HTop(\ba)}[\ba,-]_{\cR},$$
where $[\ba,\bC]_{\cR}$ is the category of functors $\theta:\ba\to\bC$ with $\HT_\theta=\cR$.
\item For each $\cR\in\HTop(\ba)$, the 2-functor 
$$[\ba,-]_{\cR}:\;\ABf\to\Cat$$
is represented by $\HSh(\ba,\cR)$. More concretely, for each $\bC\in\ABf$ composition with $\tHZ$ yields an equivalence
$$\ABf(\HSh(\ba,\cR),\bC)\;\stackrel{\sim}{\to}\; [\ba,\bC]_{\cR}.$$
\item For every $\cR\in\HTop(\ba)$, the functor $\tHZ:\ba\to\HSh(\ba,\cR)$ is AB3-tight.
\end{enumerate}

\end{theorem}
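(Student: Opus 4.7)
The plan is to bootstrap from Theorem~\ref{PropFlatRep} applied to the kernel category $\Noy\ba$, using Theorem~\ref{Thm3Way}(1) as a bridge between functors on $\ba$ and functors on $\Noy\ba$. The key underlying fact is that $\Noy\ba$ has kernels (Lemma~\ref{LemK+K}(3)), so by Corollary~\ref{TrFlat}(1) a functor $\Noy\ba\to\bC$ with $\bC\in\ABf$ is prexact if and only if it is left exact. This is what allows Theorem~\ref{Thm3Way}(1) to interact smoothly with Theorem~\ref{PropFlatRep}.

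For part (1), I would observe that for additive $\theta:\ba\to\bC$ with $\bC\in\ABf$, the corresponding functor $\vec\theta:\Noy\ba\to\bC$ provided by Theorem~\ref{Thm3Way}(1) is left exact, hence prexact. This makes the topology $\cT_{\vec\theta}\in\Top(\Noy\ba)=\HTop(\ba)$ well defined, and the asserted decomposition is then simply the tautological labelling $\theta\mapsto\HT_\theta=\cT_{\vec\theta}$.

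For part (2), the plan is to chain three equivalences. First, Theorem~\ref{Thm3Way}(1) together with Corollary~\ref{TrFlat}(1) gives
\[
[\ba,\bC]\;\xrightarrow{\sim}\;[\Noy\ba,\bC]_{prex},\qquad \theta\mapsto\vec\theta,
\]
which, by the very definition $\HT_\theta:=\cT_{\vec\theta}$, restricts to $[\ba,\bC]_{\cR}\simeq[\Noy\ba,\bC]_{prex,\cR}$. Next, Theorem~\ref{PropFlatRep} applied to $\Noy\ba$ identifies the latter with $\ABt(\HSh(\ba,\cR),\bC)$ via precomposition with $\tZ$. Finally, since $\HSh(\ba,\cR)$ is Grothendieck, hence AB5, the morphism category $\ABt(\HSh(\ba,\cR),\bC)$ agrees with $\ABf(\HSh(\ba,\cR),\bC)$ whenever $\bC\in\ABf$. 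Tracing through, the composite equivalence is precomposition with $\tZ\circ\tN=\tHZ$.

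For part (3), I would let $\bB\subseteq\HSh(\ba,\cR)$ be an AB3 subcategory containing the image of $\tHZ$ and argue in two steps. First, every $F\in\Noy\ba$ arises as $\ker(\tN h)$ for some $h:X\to Y$ in $\ba$: specialising the explicit kernel formula $(f,\alpha):X^0\to X^1\oplus Y^0$ from the proof of Lemma~\ref{LemK+K}(3) to $\tN h:\tN X\to\tN Y$ (so that both structure morphisms are zero and the representing morphism is $\alpha=h$) reduces to $(0,h):X\to 0\oplus Y=Y$, i.e.\ the object $h$ itself. Since $\tZ=\tS\circ\Yon$ preserves kernels, this places every sheafified representable $\tZ(F)$ inside $\bB$. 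Second, every sheaf in $\HSh(\ba,\cR)$ is a colimit of sheafified representables (obtained by applying the cocontinuous functor $\tS$ to the tautological presentation of a presheaf as a colimit of representables); closedness of $\bB$ under colimits then yields $\bB=\HSh(\ba,\cR)$. The main obstacle I anticipate is bookkeeping: verifying that the topology labels transport correctly through each equivalence in part (2), and pinning down the identification $F=\ker(\tN h)$ in $\Noy\ba$ that fuels the generation argument in part (3); the remaining ingredients are direct applications of results already in place.
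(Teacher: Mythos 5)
Your proposal for parts (2) and (3) follows essentially the same route as the paper: part (2) is exactly the chain $[\ba,\bC]_{\cR}\simeq[\Noy\ba,\bC]_{prex,\cR}\simeq\ABt(\Sh(\Noy\ba,\cR),\bC)$ via Theorem~\ref{Thm3Way}(1) and Theorem~\ref{PropFlatRep} applied to $\Noy\ba$, and your part (3) is the paper's argument with the identification $\vec{\tHZ}\simeq\tZ$ verified by hand (your observation that each object $h$ of $\Noy\ba$ is the kernel of $\tN h:\tN X\to\tN Y$, preserved by $\tZ=\tS\circ\Yon$, is a correct unwinding of the paper's appeal to Theorem~\ref{Thm3Way}(1)).

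The one point you gloss over is part (1). The statement is a decomposition of \emph{2-functors} on $\ABf$, so besides well-definedness of the label $\HT_\theta$ (which the paper already settles in the remark preceding the theorem), one must check that each $[\ba,-]_{\cR}$ is actually a sub-2-functor, i.e.\ that for a 1-morphism $F\in\ABf(\bC,\bB)$ one has $\HT_{F\circ\theta}=\HT_{\theta}$. This is the entire content of the paper's proof of (1): since $F$ is exact it preserves the kernels defining $\vec\theta$, so $F\circ\vec\theta=\overrightarrow{F\circ\theta}$, and then $\cT_{F\circ\vec\theta}=\cT_{\vec\theta}$ because $F$ is faithful, exact and cocontinuous (Example~\ref{ExTopPrex}(2)). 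Calling the decomposition ``the tautological labelling'' skips this verification; it is easy, but without it the displayed coproduct of 2-functors is not established. Add that one line and the proof is complete.
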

\begin{proof}
For $F\in\ABf(\bC,\bB)$ and $\theta\in[\ba,\bC]$, we have $F\circ\vec{\theta}=\overrightarrow{F\circ \theta}$, and hence $\HT_\theta=\HT_{F\circ \theta}$ by Example~\ref{ExTopPrex}(2). This proves part (1).

By definition and Theorem~\ref{Thm3Way}(1), composition with $\tN$ yields an equivalence
$$[\Noy\ba,\bC]_{prex,\cR}\;\xrightarrow{\sim}\;[\ba,\bC]_{\cR}.$$
The proof of (2) is then concluded by applying Theorem~\ref{PropFlatRep} to $\Noy\ba$.

For part (3), consider an AB3 subcategory $\bA\subset\Sh(\Noy\ba,\cR)$ which contains the image of $\tHZ$. Since $\bA$ is is closed under kernels and since $\vec{\tHZ}:\Noy\ba\to \Sh(\Noy\ba,\cR)$ is isomorphic to $\tZ$ by Theorem~\ref{Thm3Way}(1), it follows that $\bA$ contains the image of $\tZ$. Since every object in $\Sh(\Noy\ba,\cR)$ is a cokernel of a morphism between direct sums of objects in the image of $\tZ$, and $\bA$ is closed under such operations, it follows indeed that $\bA=\Sh(\Noy\ba,\cR)$.
\end{proof}

To state the following theorem, we define a special type of sieves in $\Noy\ba$. For every morphism $f:X^0\to X^1$ in $\ba$ viewed as an object in $\Noy\ba$, we let $R_f\subset \Noy\ba(-,f)$ be the sieve generated by all morphisms 
$$\xymatrix{
Y\ar[rr]\ar[d]^-{a}&& 0\\
X^0\ar[rr]^f &&X^1
}$$
in $\Noy\ba$ to $f$ from objects in the image of $\tN :\ba\to\Noy\ba$.
Recall that by definition such morphisms  satisfy $f\circ a=0$.

\begin{theorem}\label{ThmIota}
There exists an injection $\iota:\Top(\ba)\hookrightarrow\HTop(\ba)$ with the following properties.
\begin{enumerate}
\item For each $\cT\in\Top(\ba)$, there is an equivalence 
$$\Sh(\ba,\cT)\;\simeq\;\HSh(\ba,\iota(\cT))$$
which yields a commutative triangle with the functors $\tZ$ and $\tHZ$ out of $\ba$.
\item For a prexact functor $\theta:\ba\to\bC$, we have $\iota(\cT_\theta)=\HT_{\theta}$.
\item The image of $\iota$ consists of the Grothendieck topologies $\cR$ on $\Noy\ba$ for which $R_f\in \cR(f)$, for each object $f:X^0\to X^1$ in $\Noy\ba$.
\item The partial function $u_t:\Top(\bb)\rightharpoonup\Top(\ba)$ from~\ref{DefPartFun} induced by a functor $u:\ba\to\bb$ is obtained from restricting source and target along $\iota$ of the function
$$\HTop(\bb)\,\to\,\HTop(\ba),\;\cR\mapsto \HT_{\tHZ\circ u}\quad\mbox{for }\; \tHZ:\bb\to\HSh(\bb,\cR).$$ 
\end{enumerate}
\end{theorem}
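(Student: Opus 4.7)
The plan is to set $\iota(\cT) := \HT_{\tZ}$ for the sheafification $\tZ:\ba\to\Sh(\ba,\cT)$, and to prove part~(2) first. Given any prexact $\theta:\ba\to\bC$ with $\cT_\theta=\cT$, Theorem~\ref{PropFlatRep} yields a faithful exact cocontinuous $F:\Sh(\ba,\cT)\to\bC$ with $F\circ\tZ\simeq\theta$; exactness of $F$ gives $\vec{\theta}\simeq F\circ\vec{\tZ}$, and Example~\ref{ExTopPrex}(2) then forces $\HT_{\theta}=\cT_{\vec\theta}=\cT_{\vec{\tZ}}=\iota(\cT)$. Injectivity of $\iota$ will follow from~(1): the compatible equivalence there identifies $\tZ$ with $\tHZ$, and $\cT$ is recovered from $\tZ$ by Example~\ref{ExTopPrex}(1).

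For~(1) I would construct mutually inverse faithful exact cocontinuous functors. The key input is Theorem~\ref{Thm3Way}(1), combined with the kernel computation in Lemma~\ref{LemK+K}(3), which together identify $\vec{\tHZ}$ with the sheafification on $\Noy\ba$ for the topology $\iota(\cT)$. Under this identification, the prexactness criterion of Theorem~\ref{ThmFlat} applied to $\tHZ:\ba\to\HSh(\ba,\iota(\cT))$ translates exactly into the statement that every sieve $R_f\subset\Noy\ba(-,f)$ is covering for $\iota(\cT)$; and that statement holds because the same criterion applied to the prexact $\tZ:\ba\to\Sh(\ba,\cT)$ says precisely that each $R_f$ lies in $\HT_{\tZ}=\iota(\cT)$. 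Hence $\tHZ$ is prexact. Theorem~\ref{ThmUniHSh}(2) applied to $\tZ$ now supplies a faithful exact cocontinuous $F:\HSh(\ba,\iota(\cT))\to\Sh(\ba,\cT)$ with $F\circ\tHZ\simeq\tZ$; since $F$ reflects epimorphisms one gets $\cT_{\tHZ}=\cT$, so Theorem~\ref{PropFlatRep} produces a faithful exact cocontinuous $G:\Sh(\ba,\cT)\to\HSh(\ba,\iota(\cT))$ with $G\circ\tZ\simeq\tHZ$. The uniqueness in both universal properties forces $F\circ G\simeq\id$ and $G\circ F\simeq\id$.

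Parts~(3) and~(4) then follow by unwinding the same dictionary. For~(3) the forward inclusion was just noted; conversely, if $\cR\in\HTop(\ba)$ contains every $R_f$, then the dictionary shows $\tHZ:\ba\to\HSh(\ba,\cR)$ is prexact, and Theorem~\ref{Thm3Way}(1) together with Example~\ref{ExTopPrex}(1) give $\HT_{\tHZ}=\cT_{\vec{\tHZ}}=\cR$, whence~(2) applied to $\tHZ$ yields $\cR=\iota(\cT_{\tHZ})$. For~(4), under the equivalence of~(1) the assignment $\HTop(\bb)\to\HTop(\ba)$ sends $\iota(\cT)$ to $\HT_{\tZ\circ u}$ for $\tZ:\bb\to\Sh(\bb,\cT)$; by~(3) this lies in $\iota(\Top(\ba))$ precisely when $\tZ\circ u$ is prexact, which is the defining condition for $u_t(\cT)$, and in that case~(2) yields $\HT_{\tZ\circ u}=\iota(\cT_{\tZ\circ u})=\iota(u_t(\cT))$.

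The main obstacle is the identification of $\vec{\tHZ}$ with the sheafification on $\Noy\ba$, via Theorem~\ref{Thm3Way}(1) and the explicit kernel in Lemma~\ref{LemK+K}(3); this is the dictionary that converts the prexactness criterion of Theorem~\ref{ThmFlat}, a statement in $\ba$, into the sieve-covering condition $R_f\in\cR(f)$ in $\Noy\ba$. Once this translation is clear, the rest of the argument is a routine assembly of the universal properties of Theorem~\ref{PropFlatRep} and Theorem~\ref{ThmUniHSh}.
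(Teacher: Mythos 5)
Your proposal is correct and follows essentially the same route as the paper: define $\iota(\cT)=\HT_{\tZ}$, use the universal properties of Theorem~\ref{PropFlatRep} and Theorem~\ref{ThmUniHSh} to build the mutually inverse comparison functors for (1), and use the identification $\vec{\tHZ}\simeq\tZ_N$ (via Theorem~\ref{Thm3Way}(1) and the kernel formula in $\Noy\ba$) together with Theorem~\ref{ThmFlat} to translate prexactness of $\tHZ$ into the condition $R_f\in\cR(f)$ for (3). The only difference is cosmetic: you establish prexactness of $\tHZ$ directly through the $R_f$-dictionary before invoking the universal properties, whereas the paper first obtains the downward comparison functor and deduces prexactness from Example~\ref{ExPrex}(2); both orderings are valid.
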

\begin{proof}
We define a function $\iota:\Top(\ba)\to\HTop(\ba)$ which sends $\cT$ to $\HT_{\tZ}$ for $\tZ:\ba\to\Sh(\ba,\cT)$.

By Theorem~\ref{ThmUniHSh}(2), we find a commutative diagram
$$\xymatrix{
&&\HSh(\ba,\iota(\cT))\ar[d]\\
\ba\ar[rr]^{\tZ}\ar[rru]^{\tHZ} &&\Sh(\ba,\cT),
}$$
where the vertical arrow is exact, faithful and cocontinuous. It follows (Examples~\ref{ExPrex}(2) and~\ref{ExTopPrex}(2)) that $\tHZ$ is prexact with $\cT_{\tHZ}=\cT$. By Theorem~\ref{PropFlatRep}, there is also an upwards exact faithful cocontinuous functor yielding a commutative diagram. The universalities in Theorems~\ref{PropFlatRep} and~\ref{ThmUniHSh} show that the two functors are mutually inverse, proving (1). Part (2) follows from the same universality.


Similarly, we can observe that when $\tHZ$ is prexact, then it is in the image of $\iota$. To prove (3), it thus suffices to show that $\tHZ:\ba\to\HSh(\ba,\cR)$ is prexact if and only if $\cR\in\HTop(\ba)$ satisfies the condition in (3). By Theorem~\ref{Thm3Way}(1), $\vec{\tHZ}\simeq\tZ_N$, where we write $\tZ_N:\Noy\ba\to\Sh(\Noy\ba,\cR)$ in order to avoid confusion with the $\tZ$ in the previous paragraph. Application of Theorem~\ref{ThmFlat} therefore shows that $\tHZ=\tZ_N\circ \tN$ is prexact if and only if for each object $f:Y\to X$ in $\Noy\ba$, the morphism
$$\bigoplus_{g:Z\to Y, f\circ g=0}\tZ_N(\tN(Z))\;\to\; \tZ_N(f)$$
is an epimorphism, which is the same as saying $R_f\in \cR(f)=\cT_{\tZ_N}$.

Part (4) follows from definition and part (1).
\end{proof}

\begin{remark}
The partial inverse to $\iota:\Top(\ba)\hookrightarrow\HTop(\ba)$ is given by the partial function $\tN_t:\Top(\Noy\ba)\rightharpoonup \Top(\ba)$ from \ref{DefPartFun}.
\end{remark}

\subsubsection{Extended example}\label{ExDN}We work out the universal Grothendieck categories predicted by Theorem~\ref{ThmUniHSh} in a specific example. Let $k$ be a field, consider the algebra of dual numbers $R=k[x]/x^2$ and set $\ba=R\mbox{-free}$. By Example~\ref{ExNoyA}, we know that $\Noy\ba$ is equivalent to the category of finite-dimensional $R$-modules. We have two indecomposable $R$-modules up to isomorphism: the projective module $P\simeq R$ and the simple module $L\simeq k$. We can classify Grothendieck topologies on $\Noy\ba$ directly. There turn out to be 4 possibilities for a topolgy $\cR$ on $R$-mod:
\begin{enumerate}
\item[(0)] The discrete topology $\cR_0$: $0\in\cR(X) $ for all $X$.
\item[(1)] The minimal sieve in $\cR_1$ on $P$ is generated by $L\hookrightarrow P$ and the only sieve on $L$ in $\cR_1$ is the representable functor itself.
\item[(2)] The minimal sieve in $\cR_2$ on $L$ is generated by  $P\tto L$ and the only sieve on $P$ in $\cR_2$ is the representable functor itself.
\item[(3)] The trivial topology $\cR_3$: the only sieves in $\cR_3$ are the representable functors.
\end{enumerate}
By Theorem~\ref{ThmIota}(3), the image of $\iota$ is $\{\cR_0,\cR_2\}$, so in particular, $\HSh(\ba,\cR_2)\simeq R\Mod$ by Theorem~\ref{ThmIota}(1).
Obviously $\Sh(\Noy\ba,\cR_0)=0$ and this category corresponds to zero functors out of $\ba$. A non-zero $\theta:\ba\to\bC$ induces homological topology $\cR_1$ if and only if the $R\xrightarrow{x}R$ is sent to zero. The universal property in Theorem~\ref{ThmUniHSh} thus shows that $\Sh(\Noy\ba,\cR_1)$ is the category of $k$-vector spaces. So $\Sh(\Noy\ba,\cR_0)$ and $\Sh(\Noy\ba,\cR_1)$ classify non-faithful functors and we summarise the results from the theory on faithful functors in the following proposition.

\begin{prop}\label{PropEx}
Let $\bC$ be a $k$-linear AB5 category with an object $X\in \bC$ and non-zero differential $d:X\to X$ (i.e. $d^2=0$). We consider the sequence $$X\xrightarrow{d}X\xrightarrow{d}X.$$
\begin{enumerate}
\item If the sequence
is acyclic there exists an essentially unique faithful exact cocontinuous functor $R\mbox{\rm-Mod}\to\bC$ with $R\mapsto X$ and $x\mapsto d$.
\item If the sequence
has non-zero homology there exists an essentially unique faithful exact cocontinuous functor $\PSh(R\mbox{\rm-mod})\to\bC$ with $\Yon(R)\mapsto X$ and $\Yon(x)\mapsto d$.
\end{enumerate}
\end{prop}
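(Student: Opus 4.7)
My plan is: (i) unpack $(X,d)$ into a $k$-linear additive functor $\theta:\ba\to\bC$ with $\theta(R)=X$ and $\theta(x)=d$; (ii) identify the homological topology $\HT_\theta$ among the four topologies classified in~\ref{ExDN}; (iii) invoke the universal property of Theorem~\ref{ThmUniHSh}(2) to obtain the required factorisation through $\HSh(\ba,\HT_\theta)$.

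Step~(i) is formal: $\ba=R\free$ is the $k$-linear additive closure of the one-object $k$-linear category with endomorphism ring $R$, so specifying such a $\theta$ is equivalent to specifying a $k$-algebra homomorphism $R\to\End_\bC(X)$; the recipe $x\mapsto d$ defines a well-defined such homomorphism precisely because $d^2=0$, and the resulting $\theta$ is unique up to isomorphism.

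The heart of the argument is step~(ii). By Theorem~\ref{Thm3Way}(1), $\HT_\theta=\cT_{\vec\theta}$ for the left exact extension $\vec\theta:\Noy\ba\to\bC$, $f\mapsto\ker\theta(f)$. Direct computation from the definition of $\Noy\ba$ gives $\vec\theta(\tN R)=X$ and $\vec\theta(L)=\ker d$. Using~\eqref{EqNAf} one checks that $\Noy\ba(\tN R,L)$ is one-dimensional, generated by $x:R\to R$, and $\Noy\ba(L,\tN R)$ is one-dimensional, generated by $1:R\to R$; tracking these through $\vec\theta$ shows that $\vec\theta$ sends the first generator to the map $X\to\ker d$ induced by $d$ (which lands in $\ker d$ because $d^2=0$), and the second to the inclusion $\ker d\hookrightarrow X$. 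The hypothesis $d\neq0$ forces both $\theta\neq0$ and $\ker d\neq X$, so neither $\cR_0$ nor $\cR_1$ can occur. The remaining dichotomy between $\cR_2$ and $\cR_3$ is decided by whether the map $X\to\ker d$ is an epimorphism, equivalently whether $\im d=\ker d$; this is exactly the acyclicity hypothesis, yielding $\HT_\theta=\cR_2$ in case~(1) and $\HT_\theta=\cR_3$ in case~(2).

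Step~(iii) is then immediate from Theorem~\ref{ThmUniHSh}(2), which supplies an essentially unique faithful exact cocontinuous functor $F:\HSh(\ba,\HT_\theta)\to\bC$ with $F\circ\tHZ\simeq\theta$. Combining this with the identifications $\HSh(\ba,\cR_2)\simeq R\Mod$ and $\HSh(\ba,\cR_3)\simeq\PSh(R\text{-mod})$ from~\ref{ExDN}, and noting that under these identifications $\tHZ(R)$ corresponds to $R$ (respectively $\Yon(R)$) and $\tHZ(x)$ to $x$ (respectively $\Yon(x)$), delivers the two stated functors. The only non-formal obstacle is the bookkeeping in step~(ii)—specifically, pinning down the generating morphisms of $\Noy\ba(\tN R,L)$ and $\Noy\ba(L,\tN R)$ and their images under $\vec\theta$—but once this is in hand, the rest is a direct appeal to the universal property.
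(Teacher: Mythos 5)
Your proposal is correct and follows essentially the same route as the paper: both arguments construct $\theta$ from the universal property of $R\free$, observe that the image under $\vec\theta$ of the generator of $\Noy\ba(\tN R,L)$ (the surjection $P\tto L$, sent to $X\to\ker d$) is an epimorphism precisely when the sequence is acyclic, thereby pinning down $\HT_\theta$ as $\cR_2$ or $\cR_3$ among the topologies classified in~\ref{ExDN}, and then apply Theorem~\ref{ThmUniHSh}(2). Your write-up merely makes explicit the bookkeeping (the generators of $\Noy\ba(\tN R,L)$ and $\Noy\ba(L,\tN R)$ and the exclusion of $\cR_0,\cR_1$) that the paper leaves implicit.
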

\begin{proof}
The obvious universal property of $\ba$ shows that we have an essentially unique (faithful) functor $\theta:\ba\to\bC$ with $\theta(x)=d$. The morphism $P\tto L$ in $\Noy\ba$ is sent to an epimorphism by $\vec{\theta}$ precisely when the sequence in the proposition is exact. The results are therefore immediate applications of Theorem~\ref{ThmUniHSh}. 
\end{proof}

\begin{remark}\label{RemDualN}
\begin{enumerate}
\item The universal category in \ref{PropEx}(2) is the category of modules over the path algebra of the quiver
$$\xymatrix{
e\ar@/^/[r]^a& s\ar@/^/[l]^b
}$$with relation $b\circ a=0$. In particular, if $k=\mC$ the category is equivalent to  $\Ind\mathcal{O}_0(sl_2)$, the ind-completion of the principal block in BGG category O of $sl_2(\mC)$. 
\item For all $i$, the functor $\ba\to\Sh(\Noy\ba, \cR_i)$ is full and for $i>1$ it is faithful. 
\end{enumerate}
\end{remark}

\subsection{Alternative realisations}
In this section we establish three alternative realisations of the universal category $\HSh(\ba,\cR)$.

\subsubsection{}
Consider the following two sets of Grothendieck topologies:
\begin{itemize}
\item The set $\Top(K_+^b\ba)_0$ of topologies $\cR$ on $K^b_+\ba$ with $0\in\cR(X[i])$ for all $i<0$ and $X\in\ba$.
\item The set $\Top(K^b\ba)_0$ of topologies $\cR$ on $K^b\ba$ with $0\in\cR(X[i])$ for all $i\not=0$ and $X\in\ba$.
\end{itemize}
\label{Rhat} For $\cR\in\Top(K^b\ba)_0$, we consider the following covering system $\hat{\cR}$ on $K^b\ba$. For each $\cX\in K^b\ba$, a sieve $S$ on $\cX$ is in $\hat{\cR}$ if and only if
$$S[i]\in\cR(\cX[i]),\quad\mbox{for all $i\in\mZ$}.$$
Since $\hat{\cR}$ is the intersection of a family of Grothendieck topologies (a countable number of `shifts' of the topology $\cR$), it is again a topology on $K^b\ba$.

\begin{theorem}\label{ThmAlternative}

\begin{enumerate}
\item There exist bijections
$$\xymatrix{ \Top(K_+^b\ba)_0&\HTop(\ba)\ar[r]^-{1:1}_-{\rho_2}\ar[l]_-{1:1}^-{\rho_1}&\Top(K^b\ba)_0}$$
such that, for each $\cR\in\HTop(\ba)$, we have a commutative diagram
$$\xymatrix{\Noy\ba\ar[d]&K_+^b\ba\ar[l]\ar[r]\ar[d]&K^b\ba\ar[d]\\
\Sh(\Noy\ba,\cR)&\Sh(K^b_+\ba,\rho_1(\cR))\ar[l]\ar[r]&\Sh(K^b\ba,\rho_2(\cR))}$$
where the bottom line consists of equivalences.
\item For $\cR\in\Top(K^b\ba)_0$, the category $\Sh(K^b\ba,\cR)$ is equivalent to the minimal AB3 subcategory of $\Sh(K^b\ba,\hat{\cR})$ which contains the image of $\ba\to \Sh(K^b\ba,\hat{\cR})$.
\item For an additive functor $\theta:\ba\to\bC$ to $\bC\in\ABf$, the homological topology $\HT_\theta$ is determined by the topology of $\theta^{\mZ}_\Delta: K^b\ba\to\bC^{\mZ}$ and vice versa.
\end{enumerate}
\end{theorem}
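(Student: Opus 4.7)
The strategy throughout is to combine the universal properties of $\Noy\ba$, $K_+^b\ba$, and $K^b\ba$ recorded in Theorem~\ref{Thm3Way} with the classification of AB3-tight prexact functors given by Theorems~\ref{PropFlatRep} and~\ref{ThmUniHSh}. For part (1), I would define $\rho_2\colon\HTop(\ba)\to\Top(K^b\ba)_0$ by sending $\cR$ to $\cT_{\theta^0_\Delta}$, where $\theta:=\tHZ\colon\ba\to\bC:=\HSh(\ba,\cR)$ and $\theta^0_\Delta\colon K^b\ba\to\bC$ is its canonical homological extension from Theorem~\ref{Thm3Way}(3). Since $\bC$ is AB5, $\theta^0_\Delta$ is prexact by Corollary~\ref{TrFlat}(2), and it lands in $\Top(K^b\ba)_0$ by the assumed vanishing on shifts. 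For the inverse, starting from $\cS\in\Top(K^b\ba)_0$, the functor $\tZ\colon K^b\ba\to\Sh(K^b\ba,\cS)$ is homological and kills $X[i]$ for $i\neq 0$, so Theorem~\ref{Thm3Way}(3) presents it uniquely as $(\theta')^0_\Delta$ for $\theta':=\tZ|_{\ba}$. The functor $\theta'$ inherits AB3-tightness from $\tZ$ because, under the vanishing condition, every $\tZ(\cX)$ can be rebuilt from $\theta'$ of the components $\cX^j$ via iterated kernels and cokernels, so $\rho_2(\HT_{\theta'})=\cS$ by construction. The symmetric argument based on Theorem~\ref{Thm3Way}(2) yields $\rho_1$, and the equivalences in the commutative diagram of sheaf categories follow from the mutual universal properties.

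For part (2), fix $\cR\in\Top(K^b\ba)_0$, set $\HT:=\rho_2^{-1}(\cR)$, $\bC:=\HSh(\ba,\HT)$, and $\theta:=\tHZ$. The heart of the argument is the identification
\[
\hat{\cR}\;=\;\cT_{\theta^\mZ_\Delta},\qquad\theta^\mZ_\Delta\colon K^b\ba\to\bC^\mZ,
\]
which I would verify directly from the definitions: using the shift convention $(\cX[1])^i=\cX^{i+1}$, one has $\theta^0_\Delta(\cX[i])=H^i(\theta\cX)$, so the requirement $S[i]\in\cR(\cX[i])$ for every $i\in\mZ$ translates into $H^i(\theta S)\to H^i(\theta\cX)$ being an epimorphism for every $i$, which is precisely $\theta^\mZ_\Delta(S)\tto\theta^\mZ_\Delta(\cX)$. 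The functor $\theta^\mZ_\Delta$ is homological (hence prexact by Corollary~\ref{TrFlat}(2)) via the long exact cohomology sequence, and AB3-tight: the computation $\theta^\mZ_\Delta(X[-j])=\theta(X)\langle j\rangle$ combined with AB3-tightness of $\theta$ in $\bC$ (Theorem~\ref{ThmUniHSh}(3)) generates each graded summand of $\bC^\mZ$. Theorem~\ref{PropFlatRep} therefore produces an equivalence $\Sh(K^b\ba,\hat{\cR})\xrightarrow{\sim}\bC^\mZ$ under which the composition $\ba\to K^b\ba\to\Sh(K^b\ba,\hat{\cR})$ lands in the degree-zero summand as $\theta$ itself. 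The minimal AB3 subcategory of $\bC^\mZ$ containing this image is $\bC\langle 0\rangle$ by AB3-tightness of $\theta$, and this identifies with $\Sh(K^b\ba,\cR)$ via part (1), proving the claim.

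Part (3) is then essentially bookkeeping: $\HT_\theta$ determines $\rho_2(\HT_\theta)=\cR$ by part (1), whence $\hat{\cR}=\cT_{\theta^\mZ_\Delta}$ by the key identification above; conversely, from $\cT_{\theta^\mZ_\Delta}$ one rebuilds $\bC^\mZ$ via Theorem~\ref{PropFlatRep}, extracts $\bC$ as the minimal AB3 subcategory containing the image of $\ba$, and reads $\HT_\theta$ off the resulting AB3-tight functor $\ba\to\bC$ using Theorem~\ref{ThmUniHSh}. The principal technical obstacle I anticipate is the identification $\hat{\cR}=\cT_{\theta^\mZ_\Delta}$ together with the AB3-tightness of $\theta^\mZ_\Delta$: both require a careful match between the sieve-shift definition of $\hat{\cR}$ and the cohomology-degree decomposition in $\bC^\mZ$, and it is there that the genuinely new sheaf-theoretic content of the theorem lives.
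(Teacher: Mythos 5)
Your treatment of part (1) and your verification that $\hat{\cR}=\cT_{\theta^{\mZ}_\Delta}$ follow the paper's route (Theorem~\ref{Thm3Way} identifying the relevant functor categories, Theorem~\ref{PropFlatRep} for representability, AB3-tightness transferred along $H^0$), just with the bijections unwound explicitly; that part is fine.

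The gap is in part (2), at the sentence ``Theorem~\ref{PropFlatRep} therefore produces an equivalence $\Sh(K^b\ba,\hat{\cR})\xrightarrow{\sim}\bC^{\mZ}$.'' Theorem~\ref{PropFlatRep} only produces a \emph{faithful exact cocontinuous} comparison functor $F:\Sh(K^b\ba,\hat{\cR})\to\bC^{\mZ}$ with $F\circ\tZ\simeq\theta^{\mZ}_\Delta$. Matching topology plus AB3-tightness of the target functor is not enough to upgrade $F$ to an equivalence: for a finite group scheme $G$, the forgetful functor $\Ind\Rep_k G\to\Ind\Vecc_k$ is faithful, exact, cocontinuous, compatible with an AB3-tight functor out of $\Rep_k G$ inducing the same (epimorphism) topology, yet it is not an equivalence. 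This is exactly why Lemma~\ref{LemRec} carries the extra hypothesis that $-\circ\phi$ be essentially surjective. Worse, the asserted equivalence is implausible here: $\tZ_{\hat{\cR}}(\cX)$ is an iterated extension of the pieces coming from the truncations of $\cX$ (via the long exact sequence of the homological functor $\tZ_{\hat{\cR}}$ applied to $\tau^{>0}\cX\to\cX\to\tau^{\le 0}\cX$), and there is no reason these extensions split, whereas $\bC^{\mZ}$ has no morphisms, let alone extensions, between distinct degrees. Since part (2) and the converse direction of part (3) both rest on this equivalence, they do not go through as written. The repair is essentially the paper's proof: use $F$ only to observe that $F^{-1}(\bC\langle 0\rangle)$ is an AB3 subcategory of $\Sh(K^b\ba,\hat{\cR})$ containing the image of $\ba$, hence contains the minimal such subcategory $\bA$, so that $F$ restricts to $G:\bA\to\bC$ with $G\circ(\ba\to\bA)\simeq\theta$; combining this with the functor $\HSh(\ba,\rho_2^{-1}(\cR))\to\bA$ supplied by Theorem~\ref{ThmUniHSh}(2) and running the argument of Lemma~\ref{LemRec} identifies $\bA$ with $\Sh(K^b\ba,\cR)$ via part (1), without ever claiming anything about $\Sh(K^b\ba,\hat{\cR})$ beyond its subcategory $\bA$.
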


\begin{remark}
We will henceforth refer to an element of any of the sets $\Top(\Noy\ba)$, $\Top(K_+^b\ba)_0$ or $\Top(K^b\ba)_0$ as a `homological topology on $\ba$'. Furthermore, we will interpret $\HSh(\ba,\cR)$ accordingly as a sheaf category on $\Noy\ba$, $K_+^b\ba$ or $K^b\ba$ depending on what form is more convenient.
\end{remark}

We start the proof with the following general recognition result.

\begin{lemma}\label{LemRec}
Consider a homological Grothendieck topology $\cR$ on $\ba$ and $\phi\in[\ba,\bA]_{\cR}$ for some AB5 category $\bA$ for which $\phi:\ba\to\bA$ is AB3-tight.
If $-\circ\phi:\ABf(\bA,\bC)\to[\ba,\bC]_{\cR}$ is essentially surjective for all $\bC\in\ABf$, then $\phi$ is isomorphic to $\tHZ:\ba\to\HSh(\ba,\cR)$.
\end{lemma}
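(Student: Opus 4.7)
The plan is to exhibit mutually quasi-inverse equivalences between $\bA$ and $\HSh(\ba,\cR)$ that identify $\phi$ with $\tHZ$. Applying Theorem~\ref{ThmUniHSh}(2) to $\phi\in[\ba,\bA]_{\cR}$ produces a functor $G\in\ABf(\HSh(\ba,\cR),\bA)$ with $G\circ\tHZ\simeq\phi$. In the reverse direction, the essential surjectivity hypothesis, applied with $\bC=\HSh(\ba,\cR)$ to the object $\tHZ\in[\ba,\HSh(\ba,\cR)]_{\cR}$ (which lies in this category by Example~\ref{ExTopPrex}(1) together with Theorem~\ref{Thm3Way}(1), since $\vec{\tHZ}\simeq\tZ$), supplies a functor $F\in\ABf(\bA,\HSh(\ba,\cR))$ with $F\circ\phi\simeq\tHZ$.

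Next I would observe that $(F\circ G)\circ\tHZ\simeq F\circ\phi\simeq\tHZ\simeq\id\circ\tHZ$; since composition with $\tHZ$ is in particular fully faithful by Theorem~\ref{ThmUniHSh}(2), this forces a natural isomorphism $\alpha\colon F\circ G\Rightarrow\id_{\HSh(\ba,\cR)}$. From this retraction and the faithfulness of $F$ (inherited from $F\in\ABf$), I would deduce that $G$ is fully faithful. Faithfulness is free. For fullness, given $h\colon GX\to GY$ in $\bA$, set $\tilde h:=\alpha_Y\circ F(h)\circ\alpha_X^{-1}\colon X\to Y$ in $\HSh(\ba,\cR)$; naturality of $\alpha$ at $\tilde h$ then yields $F(G(\tilde h))=F(h)$, and faithfulness of $F$ forces $G(\tilde h)=h$.

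With $G$ fully faithful, exact and cocontinuous, and with the commutative triangle $\ba\xrightarrow{\tHZ}\HSh(\ba,\cR)\xrightarrow{G}\bA$ recovering the AB3-tight functor $\phi$, Lemma~\ref{LemTight} applies directly and $G$ is an equivalence. Composition $\phi\simeq G\circ\tHZ$ then transports $\tHZ$ to $\phi$ across this equivalence, which is precisely the claimed isomorphism.

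The main obstacle I expect is the fullness argument for $G$; the rest is a direct chase through the universal properties already at hand. The conceptual point is that AB3-tightness of $\phi$, coupled with the universal property of $\HSh(\ba,\cR)$, is exactly what is needed to upgrade the one-sided essential surjectivity of $-\circ\phi$ into a two-sided equivalence.
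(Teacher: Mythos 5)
Your proof is correct and follows essentially the same route as the paper's: produce $G$ with $G\circ\tHZ\simeq\phi$ from Theorem~\ref{ThmUniHSh}(2) and $F$ with $F\circ\phi\simeq\tHZ$ from the hypothesis, deduce $F\circ G\simeq\id$ from full faithfulness of $-\circ\tHZ$, conclude $G$ is fully faithful, and finish with Lemma~\ref{LemTight}. You merely swap the names of $F$ and $G$ relative to the paper and spell out the fullness argument that the paper leaves implicit.
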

\begin{proof}
We claim there exists faithful exact cocontinuous functors $F,G$ for which the diagram
$$\xymatrix{
&\ba\ar[ld]_{\tHZ}\ar[d]^{\phi}\ar[rd]^{\tHZ}&\\
\HSh(\ba,\cR)\ar[r]^-{F}&\bA\ar[r]^-{G}&\HSh(\ba,\cR)
}$$
is commutative. Indeed, $G$ exists by assumption. Existence of $F$ follows from Theorem~\ref{ThmUniHSh}(2). The latter result also implies that $G\circ F$ is isomorphic to the identity functor. From this it follows that $F$ is also full. That $F$ is then an equivalence follows from Lemma~\ref{LemTight}.
\end{proof}

\begin{proof}[Proof of Theorem~\ref{ThmAlternative}]
The sets $\Top(K^b_+\ba)_0$ and $\Top(K^b\ba)_0$ comprise precisely the topologies of the prexact functors $\phi$ which satisfy conditions (2)(a) and (3)(a) in Theorem~\ref{Thm3Way}. We can therefore repeat the proof of Theorem~\ref{ThmUniHSh} to state multirepresentability of $[\ba,-]$ in terms of sheaf categories on $K^b_+\ba$ or $K^b\ba$ with respect to the above topologies. This shows that there must exist bijections between the three sets of topologies such that the bijection results in equivalences between the sheaf categories. We then also get commutativity of the desired diagram up to composition with $\ba\to K_+^b\ba$. That this lifts to commutativity of the original diagram follows from Theorem~\ref{Thm3Way}(2).


Now we prove part (2). First we observe that, for $\theta:\ba\to\bC$, if $\cR$ is the topology corresponding to a prexact functor $\theta_\Delta^0$, then $\hat{\cR}$ is the topology corresponding to the prexact functor $\theta_{\Delta}^{\mZ}: K^b\ba\to\bC^{\mZ}$.
 Let $\bA$ denote the minimal AB3 subcategory of $\Sh(K^b\ba,\hat{\cR})$ which contains the image of $\ba\to \Sh(K^b\ba,\hat{\cR})$. For every $\bC\in\ABf$, we have functors
 $$\xymatrix{[\ba,\bC]_{\cR}\ar[rr]^-{\theta\mapsto \theta_\Delta^{\mZ}}&&[K^b\ba,\bC^{\mZ}]_{prex,\hat{\cR}}&\ABf(\Sh(K^b\ba,\hat{\cR}),\bC^{\mZ})\ar[r]\ar[l]^-{\sim}_-{-\circ\tZ}&\ABf(\bA,\bC^{\mZ})},$$
 where the equivalence is from Theorem~\ref{PropFlatRep} and the right arrow is composition with the inclusion of $\bA$. 
 
 Now take $\theta\in [\ba,\bC]_{\cR}$ and consider the resulting functor $\theta'\in\ABf(\bA,\bC^{\mZ})$. The composite of $\theta'$ and $\ba\to\bA$ is isomorphic to $\ba\xrightarrow{\theta}\bC\hookrightarrow \bC^{\mZ}$. Tightness of $\ba\to\bA$ then shows that also $\bA\to\bC^{\mZ}$ actually takes values in $\bC$, so henceforth we interpret $\theta'\in \ABf(\bA,\bC)$. The composite of $\theta'$ and $\ba\to\bA$ is then just isomorphic to $\theta$. In particular, $\ba\to\bA$ induces homological topology $\cR$.
Now we can apply Lemma~\ref{LemRec}, since the functor $\ABf(\bA,\bC)\to[\ba,\bC]_{\cR}$ will send $\theta'$ to $\theta$, and therefore be essentially surjective.

For part (3), by (1), we can replace $\HT_\theta$ with the topology $\cR$ of the prexact functor $\theta_\Delta^0$. As discussed above, the topology of $\theta_\Delta^{\mZ}$ is then $\hat{\cR}$ (which is determined by $\cR$). The other direction is a consequence of part (2).
\end{proof}

\begin{remark}\label{RemSerre}
Denote by $\Ab(\ba)$ Freyd's universal abelian category associated to $\ba$, see \cite{Be, Freyd1}, which is the (abelian) subcategory of compact objects in $\PSh(\Noy\ba)$, so $\Ind\Ab(\ba)\simeq \PSh(\Noy\ba)$.
The set of Serre subcategories in $\Ab(\ba)$ is a subset of $\HTop(\ba)$. More precisely, assume that $\HSh(\ba,\cR)$ is of the form $\Ind\bA$ for a small abelian category $\bA$ such that $\tHZ:\ba\to\Ind\bA$ takes values in $\bA$. Then $\bA=\Ab(\ba)/I$ for a Serre subcategory $I$. Moreover, $\Ind(\Ab(\ba)/I)$ is always of the form $\HSh(\ba,\cR)$. The corresponding map $I\mapsto \cR$ realises the inclusion. In some specific cases, this inclusion is an equality, see for instance~\ref{ExDN}.
\end{remark}

\subsection{Homological kernel}
We develop some notions to describe the kernel of $\vec{\theta}$ efficiently.
Fix $A\in\ba$.
\begin{definition}\label{DefHK1}
The {\bf canonical kernel} at $A$ is the following sieve $\Sigma^{\ba}_A$ on $\tN A\in\Noy\ba$. For $f\in\Noy\ba$, $\Sigma^{\ba}_A(f)$ consists of those morphisms $\alpha: f\to \tN A$ which compose to zero with all morphisms $\tN Y\to f$ for all $Y\in\ba$; so
$$\Sigma^{\ba}_A(f)\;=\; \ker\left(\Noy\ba(f, \tN A)\,\to\, \prod_{g:\tN Y\to f}\ba(Y,A)\right).$$
\end{definition}

For a morphism $f:X^0\to X^1$ in $\ba$, by \eqref{EqNAf} we can also write the abelian group $\Sigma^{\ba}_A(f)$ without reference to $\Noy\ba$ as
$$\Sigma^{\ba}_A(f)\;=\; H_0\left(\ba(X^1,A)\xrightarrow{h\mapsto h\circ f}\ba(X^0,A)\xrightarrow{m\to (m\circ g)_g}\prod_{g: Y\to X^0, f\circ g=0}\ba(Y,A)\right).$$


\begin{remark}
For a fixed $f\in\Noy\ba$, we have the following extremal cases.
\begin{enumerate}
\item The collection of morphisms $\{\tN Y\to f\,|\, Y\in\ba\}$ is jointly epimorphic in $\Noy\ba$ if and only if $\Sigma^{\ba}_A(f)=0$ for all $A\in\ba$.
\item As a morphism in $\ba$, $f$ is monic if and only if  $\Sigma^{\ba}_A(f)=\Noy\ba(f,\tN A)$ for all $A\in\ba$.
\end{enumerate}\end{remark}
\begin{example}\label{ExStr}
For a ring $R$ we set $\ba=R\free$. For $r\in R$ interpreted as the morphism $R\xrightarrow{\cdot r}R$, we find
$$\Sigma^{\ba}_R(r)\;=\; \{u\in R\,|\, tu=0 \mbox{ for all $t\in R$ with } tr=0\}/rR.$$
More concretely:
\begin{enumerate}
\item If $R$ is a domain and $r\not=0$, then $\Sigma^{\ba}_R(r)=R/rR$.
\item If $R=k[x]/(x^2)$ for a field $k$, then $\Sigma^{\ba}_R(r)=0$ for all $r\in R$.
\end{enumerate}
\end{example}

\begin{example}
If $\ba$ is an abelian category and $I\in \ba$ injective, then $\Sigma^{\ba}_I=0$.
\end{example}


Now we fix an additive functor $\theta:\ba\to\bC$ to an abelian category $\bC$.

\begin{definition}
The {\bf homological kernel at $A$ of $\theta$} is the following sieve $\Sigma^{\ba}_A\theta$ on $\tN A\in\Noy\ba$. For $f\in\Noy\ba$, the subgroup $\Sigma^{\ba}_A\theta(f)\subset\Noy\ba(f,\tN A)$ consists of the morphisms in the kernel of $\vec{\theta}$, so
\begin{eqnarray*}\Sigma^{\ba}_A\theta(f)&=& \ker\left(\Noy\ba(f, \tN A)\,\to\, \bC(\vec{\theta}f,\theta A)\right)\\
&=&H_0\left(\ba(X^1,A)\to\ba(X^0,A)\to\bC(\ker \theta(f),\theta A)\right).\end{eqnarray*}
\end{definition}
The following proposition demonstrates why $\Sigma^{\ba}_A$ is the `canonical kernel', that this canonical kernel is achieved by faithful prexact functors and that $\{\Sigma^{\ba}_A\theta\,|\,A\in\ba\}$ contains enough information to describe $\ker\vec{\theta}$ (whence the name homological kernel).
\begin{prop}\label{PropReach}\label{ReachKer}
\begin{enumerate}
\item There is an inclusion
$\Sigma^{\ba}_A\theta\subset \Sigma^{\ba}_A$ of sieves on $\tN A\in\Noy\ba$ if and only if $\theta:\ba(-,A)\to\bC(-,\theta A)$ is injective. 
\item If $\bC$ is an AB5 category and $\theta$ is prexact, then there is an inclusion
$\Sigma^{\ba}_A\subset \Sigma^{\ba}_A\theta$ of sieves on $\tN A\in\Noy\ba$.
\item If $\bC$ is an AB5 category and $\theta$ is faithful and prexact, then
$\Sigma^{\ba}_A= \Sigma^{\ba}_A\theta$.

\item We have $\Sigma^{\ba}_A=\Sigma^{\ba}_A\Yon$, for the Yoneda embedding $\Yon:\ba\to\PSh\ba$.
\item A morphism $ f\to g$ in $\Noy\ba$, represented by $\alpha:X^0\to Y^0$ as in equation~\eqref{ExNoy}, is in the kernel of $\vec{\theta}$ if and only if the class of $\alpha$ in $\Noy\ba(f,\tN Y^0)$ is in $\Sigma^{\ba}_{Y^0}\theta(f)$.
\end{enumerate}
\end{prop}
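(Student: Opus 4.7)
The five parts are closely related and I would prove them essentially in the order stated, exploiting the presentation of $\Sigma^{\ba}_A$ and $\Sigma^{\ba}_A\theta$ as $H_0$ of very similar complexes. The only difference between these two sieves at $f:X^0\to X^1$ is that the final term is either $\prod_{g:Y\to X^0,\,f\circ g=0}\ba(Y,A)$ or $\bC(\ker\theta(f),\theta A)$, and the bridge between them is the natural map $\theta(Y)\to\ker\theta(f)$ associated to any $g$ with $f\circ g=0$.

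For (1), I would first test the inclusion on $f=\tN B$. There $\Sigma^{\ba}_A(\tN B)=0$ automatically (take $g=\id_B$), whereas $\Sigma^{\ba}_A\theta(\tN B)=\ker(\theta:\ba(B,A)\to\bC(\theta B,\theta A))$, so the `only if' direction is forced. Conversely, given $\alpha\in\Sigma^{\ba}_A\theta(f)$ represented by $\alpha:X^0\to A$ and any $g:Y\to X^0$ with $f\circ g=0$, observe that $\theta(g)$ factors through $\ker\theta(f)$; composing with $\ker\theta(f)\to\theta A$, which is zero by hypothesis, gives $\theta(\alpha\circ g)=0$, and faithfulness on $\ba(-,A)$ promotes this to $\alpha\circ g=0$, so $\alpha\in\Sigma^{\ba}_A(f)$.

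For (2), which is the main work and the one step I expect to be slightly delicate, I would invoke Theorem~\ref{ThmFlat} (this is where AB5 and prexactness are essential) to rewrite
\[
\ker\theta(f)\;=\;\im\Bigl(\bigoplus_{g:Y\to X^0,\,f\circ g=0}\theta(Y)\;\to\;\theta(X^0)\Bigr),
\]
so that $\bigoplus_g\theta(Y)\twoheadrightarrow\ker\theta(f)$. If $\alpha\in\Sigma^{\ba}_A(f)$ then by definition $\alpha\circ g=0$ in $\ba$, whence the composite $\bigoplus_g\theta(Y)\to\ker\theta(f)\xrightarrow{\theta(\alpha)|_{\ker}}\theta A$ vanishes. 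The epimorphism then forces $\ker\theta(f)\to\theta A$ to be zero, i.e.\ $\alpha\in\Sigma^{\ba}_A\theta(f)$. Part (3) is immediate by combining (1) and (2), and part (4) follows by applying (3) to the Yoneda embedding, which is fully faithful and prexact (its cocontinuous extension to $\PSh\ba$ is the identity on the AB5 category $\PSh\ba$).

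For (5), I would reduce the claim to a statement about $\vec\theta(\alpha:f\to\tN Y^0)$ via the canonical morphism $\pi:g\to\tN Y^0$ in $\Noy\ba$ represented by $\id_{Y^0}$. One checks that $\vec\theta(\pi)$ is simply the inclusion $\ker\theta(g)\hookrightarrow\theta(Y^0)$, which is a monomorphism. Hence $\vec\theta(\alpha:f\to g)=0$ if and only if $\vec\theta(\pi)\circ\vec\theta(\alpha)=\vec\theta(\pi\circ\alpha)=\vec\theta(\alpha:f\to\tN Y^0)$ is zero, and by the definition of $\Sigma^{\ba}_{Y^0}\theta$ this is precisely the condition that the class of $\alpha$ lies in $\Sigma^{\ba}_{Y^0}\theta(f)$.
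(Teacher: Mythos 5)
Your proposal is correct and follows essentially the same route as the paper: both parts (1) and (2) amount to chasing the same commutative square relating $\prod_g\ba(Y,A)$, $\bC(\ker\theta(f),\theta A)$ and $\prod_g\bC(\theta Y,\theta A)$, with Theorem~\ref{ThmFlat} supplying the epimorphism $\bigoplus_g\theta(Y)\twoheadrightarrow\ker\theta(f)$ for (2) and the test object $\tN B$ giving the ``only if'' in (1). Your derivations of (4) from (3) and of (5) by postcomposing with the monomorphism $\ker\theta(g)\hookrightarrow\theta(Y^0)$ are exactly the arguments the paper indicates.
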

\begin{proof}Fix an object $f:X^0\to X^1$ in $\Noy\ba$ and
consider the commutative diagram
$$\xymatrix{
&&\prod_{g: Y\to X^0, f\circ g=0}\ba(Y,A)\ar[rd]\\
\ba(X^1,A)\ar[r]&\ba(X^0,A)\ar[ur]\ar[rd]&&\prod_{g: Y\to X^0, f\circ g=0}\bC(\theta Y,\theta A).\\
&&\bC(\ker\theta(f),\theta A)\ar[ur]
}$$
The morphisms in the left-hand side are the ones defining $\Sigma_A^{\ba}(f)$ and $\Sigma_A^{\ba}\theta(f)$. The rightmost downwards arrow is given by $\theta$.
The rightmost upwards arrow is constructed using the universality of the kernel. Concretely, for each $g$, it follows that $\theta(g)$ factors via a unique morphism $\theta Y\to \ker\theta(f)$, and the arrow is composition with said morphism in each factor.

If $\bC$ is an AB5 category and $\theta$ is prexact, then by Theorem~\ref{ThmFlat}, the latter arrow is injective. This proves part (2). Conversely, if $\theta$ is faithful on $\ba(-,A)$, then the rightmost downwards arrow is injective, showing that $\Sigma^{\ba}_A\theta\subset \Sigma^{\ba}_A$ in part (1).

To prove the other direction of part (1) we can consider $f':B\to 0$ in $\Noy\ba$, since
$$\Sigma_A^{\ba}(\tN B)=0,\quad\mbox{and}\qquad \Sigma_A^{\ba}\theta(\tN B)=\ker (\ba(B,A)\to \bC(\theta B,\theta A)).$$

 Part (3) is a consequence of parts (1) and (2).
Part (4) follows immediately from the definitions. Alternatively it follows form part (3), as $\Yon$ is faithful and by definition prexact. Conversely we can also use (4) to prove (2) immediately from the definitions.
Since $\ker \theta(f)\to \ker \theta(g)$ is zero if and only if $\ker \theta(f)\to \theta Y^0$ is zero, part (5) follows.
\end{proof}

\begin{remark}\label{RemDefSigma}
\begin{enumerate}
\item The converse to Proposition~\ref{PropReach}(3) is not true. The property $\Sigma^{\ba}_A\theta=\Sigma^{\ba}_A$ for all $A\in\ba$ for a functor $\theta:\ba\to\bC$ to an AB5 category $\bC$ is not sufficient to conclude that $\theta$ is prexact. Indeed, by Proposition~\ref{PropReach} for $\ba$ as in Example~\ref{ExStr}(2), a functor $\ba\to\bC$ satisfies $\Sigma^{\ba}_R\theta=\Sigma^{\ba}_R$ if and only if it is faithful. By \ref{ExDN} not all such faithful functors are prexact.
\item If $F:\bC\to\bB$ is a faithful exact functor between abelian categories, then $\Sigma^{\ba}_A\theta=\Sigma^{\ba}_A(F\circ \theta)$.
\end{enumerate}
\end{remark}

The homological kernel of a faithful functor is not always canonical, as the following examples show.
\begin{example}\label{ExNotMax}
\begin{enumerate}
\item Consider a ring $S$ with subring $R$. Set $\ba=R\free$ and $\bC=S\Mod$ and let $\theta:\ba\to\bC$ be the functor $S\otimes_R-$. For $r\in R$, seen as the morphism $R\xrightarrow{\cdot r} R$, we find
$$\Sigma^{\ba}_R\theta(r)\;=\; \{u\in R\,|\, zu=0 \mbox{ for all $z\in S$ with } zr=0\}/rR,$$
which is in general a proper subgroup of $\Sigma^{\ba}_R(r)$ from Example~\ref{ExStr}. 
\item If $f:X^0\to X^1$ is monic in $\ba$ and $\theta:\ba\to\bC$ faithful, then $\Sigma^{\ba}_A\theta(f)=\Sigma^{\ba}_A(f)$ if and only if for every $\alpha\in\ba(X^0,A)$, the morphism $\theta(\alpha)$ restricts to zero on $\ker\theta(f)$. Examples where is the latter condition is not satisfied will be given in Section~\ref{SecMono}.
\end{enumerate}
\end{example}


\section{Comments on monoidal and enriched versions}\label{SecMon}

Let $(\ba,\otimes,\unit)$ be a preadditive monoidal category and $(\bC,\otimes,\unit)$ an abelian monoidal category. We assume that both have additive tensor product $-\otimes-$.

\subsection{Multi-representability}

\subsubsection{} Assume that $\bC$ is an AB3 category for which the tensor product is cocontinuous in each variable. The 2-category of such monoidal categories, with 1-morphisms given by faithful exact cocontinuous monoidal functors and 2-morphisms natural transformations of monoidal functors is denoted by $\ABt^{\otimes}$.

For a monoidal prexact functor $\theta:\ba\to\bC$, it follows from \cite[Theorem~3.5.5(c)]{PreTop} that the Grothendieck topology $\cT_\theta$ on $\ba$ is `monoidal' as defined {\it loc. cit.}  Consequently, $\Sh(\ba,\cT_\theta)$ is a biclosed monoidal Grothendieck category (the functors $N\otimes-$ and $-\otimes N$ have right adjoints for every object $N$) and $\tZ:\ba\to\Sh(\ba,\cT_\theta)$ is monoidal. 

It follows that the 2-functor 
$$[\ba,-]^\otimes_{prex}:\;\ABt^\otimes\to\Cat$$
decomposes into the 2-functors $[\ba,-]^\otimes_{prex,\cT}$, where $\cT$ runs over all monoidal Grothendieck topologies.
\begin{prop}\label{PropMon1}
For each monoidal Grothendieck topology $\cT$ on $\ba$, the 2-functor $$ [\ba,-]_{prex,\cT}^\otimes:\;\ABt^\otimes\to\Cat$$ is represented by the biclosed monoidal Grothendieck category $\Sh(\ba,\cT)$. More concretely, composition with $\tZ:\ba\to\Sh(\ba,\cT)$ yields an equivalence
$$\ABt^\otimes(\Sh(\ba,\cT),\bC)\;\stackrel{\sim}{\to}\; [\ba,\bC]_{prex,\cT}^\otimes.$$
\end{prop}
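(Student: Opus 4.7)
The plan is to lift the additive equivalence of Theorem~\ref{PropFlatRep} to a monoidal one, using that $\cT$ being a monoidal topology makes $\Sh(\ba,\cT)$ into a biclosed monoidal Grothendieck category with $\tZ$ a strong monoidal functor. More precisely, by the cited \cite[Theorem~3.5.5(c)]{PreTop}, the Day convolution on $\PSh\ba$ descends along sheafification, so $\tS:\PSh\ba\to\Sh(\ba,\cT)$ is strong monoidal, and hence so is $\tZ=\tS\circ\Yon$.

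Given this, the first step is the easy direction. If $F:\Sh(\ba,\cT)\to\bC$ lies in $\ABt^\otimes(\Sh(\ba,\cT),\bC)$, then $F\circ\tZ$ is prexact with $\cT_{F\circ\tZ}=\cT$ by Theorem~\ref{PropFlatRep} and inherits a strong monoidal structure by composing the monoidal structures of $F$ and $\tZ$. Conversely, starting from $\theta\in[\ba,\bC]^\otimes_{prex,\cT}$, Theorem~\ref{PropFlatRep} supplies an essentially unique faithful exact cocontinuous $\Theta:\Sh(\ba,\cT)\to\bC$ with $\Theta\circ\tZ\simeq\theta$, and the core task is to equip $\Theta$ with a strong monoidal structure compatible with that of $\theta$.

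The second step constructs this monoidal structure. For $M,N\in\Sh(\ba,\cT)$, the bifunctors $(M,N)\mapsto\Theta(M\otimes N)$ and $(M,N)\mapsto\Theta(M)\otimes\Theta(N)$ are both cocontinuous in each variable (using cocontinuity of $\Theta$, of $\otimes$ on $\Sh(\ba,\cT)$, and of $\otimes$ on $\bC$). Since every object of $\Sh(\ba,\cT)$ is canonically a colimit of representables $\tZ(X)$, and on representables the two functors agree via
\[
\Theta(\tZ(X)\otimes\tZ(Y))\;\simeq\;\Theta(\tZ(X\otimes Y))\;\simeq\;\theta(X\otimes Y)\;\simeq\;\theta(X)\otimes\theta(Y)\;\simeq\;\Theta(\tZ(X))\otimes\Theta(\tZ(Y)),
\]
using monoidality of $\tZ$ and $\theta$, the uniqueness of left Kan extensions from $\ba\times\ba$ gives a canonical natural isomorphism $\Theta(M\otimes N)\simeq\Theta(M)\otimes\Theta(N)$. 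The unit constraint $\Theta(\unit)\simeq\Theta(\tZ(\unit))\simeq\theta(\unit)\simeq\unit$ is immediate.

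The main (and only real) obstacle is verifying the monoidal coherence axioms (associativity, unitality, and the analogue for morphisms of monoidal functors in $[\ba,\bC]^\otimes_{prex,\cT}$). This is where the density of representables pays off: both sides of each pentagon/triangle diagram are cocontinuous in all variables, so it suffices to check the diagrams after precomposing with $\tZ$ in each slot. There they reduce to the corresponding coherence diagrams for the monoidal functor $\theta$, which hold by assumption. The same density argument shows that $\Theta\mapsto\Theta\circ\tZ$ is essentially surjective and fully faithful on monoidal natural transformations, completing the equivalence. Hence the equivalence of Theorem~\ref{PropFlatRep} restricts to an equivalence between the monoidal subcategories, which is exactly the claim.
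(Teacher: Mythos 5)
Your proposal is correct, but it takes a more hands-on route than the paper. The paper's proof is essentially a citation: it invokes the monoidal version of the cocontinuous-extension theorem (\cite[Theorem~3.5.6]{PreTop}), which directly gives the equivalence $[\Sh(\ba,\cT),\bC]^\otimes_{cc}\xrightarrow{\sim}[\ba,\bC]^\otimes_{\cS}$ at the level of cocontinuous monoidal functors, and then observes that restricting to the subcategories in question (exact faithful on one side, prexact with topology $\cT$ on the other) is word-for-word the argument of Theorem~\ref{PropFlatRep}, since those conditions concern only the underlying functors. You instead start from the already-restricted non-monoidal equivalence of Theorem~\ref{PropFlatRep} and then manufacture the monoidal structure on the extension $T:\Sh(\ba,\cT)\to\bC$ yourself, via the Day-convolution-style argument: both $(M,N)\mapsto T(M\otimes N)$ and $(M,N)\mapsto TM\otimes TN$ are cocontinuous in each variable, agree on the dense image of $\tZ$, hence are canonically isomorphic, with coherence and the behaviour on monoidal natural transformations checked by the same density argument. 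This is exactly the content of the cited external theorem, so you are in effect re-proving it; what your version buys is self-containedness and an explicit description of the monoidal constraint on $T$, at the cost of having to verify (or at least gesture at) the multivariable form of the extension equivalence and the faithfulness of restriction on natural transformations between multivariable cocontinuous functors, which the paper gets for free from \cite{PreTop}. Two cosmetic remarks: the paper reserves $\Theta$ for the Kan extension to $\PSh\ba$ and writes $T$ for the induced functor on sheaves, so your notation clashes slightly; and the biclosedness of $\Sh(\ba,\cT)$ (whence cocontinuity of its tensor product in each variable), which your argument relies on, is supplied by the discussion preceding the proposition.
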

\begin{proof}
By \cite[Theorem~3.5.6]{PreTop}, and with notation and results from the proof of Theorem~\ref{PropFlatRep}, composition with $\tZ$ yields an equivalence
$$[\Sh(\ba,\cT),\bC]_{cc}^\otimes\;\stackrel{\sim}{\to}\; [\ba,\bC]_{\cS}^\otimes.$$
The proof that this restricts to an equivalence between subcategories yielding the proposition is identical to the proof of Theorem~\ref{PropFlatRep}, as it only relates to the underlying functors and not their monoidal structure.
\end{proof}

\subsection{Monoidal structure on the kernel category}Assume $\ba$ is additive.

\subsubsection{}
The category $K^b\ba$ is monoidal (rigid when $\ba$ is rigid), with 
$$(\cX\otimes \cY)^i=\bigoplus_{j\in\mZ} \cX^{i-j}\otimes \cY^j.$$
The differential on the tensor product is given by
$$\cX^a\otimes\cY^b\;\xrightarrow{(d\otimes\cY^b,(-1)^{a}\cX^a\otimes d)}\;\cX^{a+1}\otimes\cY^b\;\oplus\;\cX^a\otimes\cY^{b+1}.$$
The functor $\ba\to K^b\ba$ is canonically monoidal. The subcategory $K^b_+\ba\subset K^b\ba$ is a monoidal subcategory. 

Also $\Noy\ba$ has a monoidal structure. The tensor product of $f:X^0\to X^1$ and $g:Y^0\to Y^1$ is given by
$$X^0\otimes Y^0\xrightarrow{(f\otimes Y^0,X^0\otimes g)}X^1\otimes Y^0\oplus X^0\otimes Y^1.$$
The functor $K_+^b\ba\to\Noy\ba$ is then also monoidal.

If $\ba$ is braided (or symmetric), then so are all the above monoidal categories and functors (where we use the Koszul sign rule).

\subsubsection{} Consider a monoidal functor $\theta:\ba\to\bC$. The left exact functor $\vec{\theta}$ inherits a lax monoidal structure, coming from the obvious morphisms
$$\ker\theta(f)\otimes\ker\theta(g)\;\to\; \ker\left(\theta( X^0\otimes Y^0\,\to\,  X^0\otimes Y^1 \,\oplus\,  X^1\otimes  Y^0)\right).$$
If the tensor product in $\bC$ is left exact, it follows that the lax monoidal structure on $\vec{\theta}$ is actually monoidal.

\begin{lemma}\label{LemPrexMon}
\begin{enumerate}
\item Composition with the monoidal functor $\tN :\ba\to\Noy\ba$ yields a fully faithful functor
$$[\Noy\ba,\bC]^{\otimes}_{lex}\;\to\; [\ba,\bC]^{\otimes}.$$
\item If the tensor product in $\bC$ is left exact, the functor in (1) is an equivalence.
\end{enumerate}
\end{lemma}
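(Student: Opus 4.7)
The plan is to leverage the additive equivalence $[\Noy\ba,\bC]_{lex}\xrightarrow{\sim}[\ba,\bC]$ from Theorem~\ref{Thm3Way}(1), with inverse $\theta\mapsto\vec\theta$, and upgrade it to the monoidal level. Faithfulness on monoidal natural transformations in part (1) is immediate, because the underlying additive natural transformations already determine each other uniquely under restriction along $\tN$. Hence both parts reduce to fullness-type statements: lifting monoidal structures on transformations for part (1), and lifting monoidal structures on functors for part (2).

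For fullness in part (1), I would take a monoidal natural transformation $\alpha:\phi_1\circ\tN\Rightarrow\phi_2\circ\tN$, extend it uniquely to an additive $\tilde\alpha:\phi_1\Rightarrow\phi_2$ by Theorem~\ref{Thm3Way}(1), and check its monoidality at an arbitrary pair of objects $f:X^0\to X^1$ and $g:Y^0\to Y^1$ of $\Noy\ba$. The key observation is that $f$, viewed as an object of $\Noy\ba$, is the kernel of the morphism $f:\tN X^0\to\tN X^1$ in $\Noy\ba$, which is a direct application of the kernel formula recalled in the proof of Lemma~\ref{LemK+K}(3). Consequently the canonical morphism $\iota_f:f\to\tN X^0$ is monic, and by left exactness of $\phi_2$ together with monoidality of $\tN$, the map $\phi_2(\iota_{f\otimes g}):\phi_2(f\otimes g)\hookrightarrow\phi_2(\tN(X^0\otimes Y^0))$ is a monomorphism. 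The monoidality square for $\tilde\alpha$ at $(f,g)$ can then be tested after post-composition with this monomorphism. Using naturality of the monoidal structures $\mu_1,\mu_2$, naturality of $\tilde\alpha$, and the identity $\tilde\alpha_{\tN A}=\alpha_A$, the post-composed square reduces to the monoidality square of $\alpha$ at $(\tN X^0,\tN Y^0)$, which holds by hypothesis.

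For part (2), I would endow $\vec\theta:\Noy\ba\to\bC$ with the lax monoidal structure described just before the lemma, whose $(f,g)$-component is the canonical map $\kappa_{f,g}:\ker\theta(f)\otimes\ker\theta(g)\to\ker\theta(f\otimes g)$, and show that $\kappa_{f,g}$ is an isomorphism when $\otimes$ is left exact in $\bC$. Using monoidality of $\theta$, the target is the intersection inside $\theta(X^0)\otimes\theta(Y^0)$ of the kernels of the maps to $\theta(X^1)\otimes\theta(Y^0)$ and $\theta(X^0)\otimes\theta(Y^1)$; left exactness of $-\otimes\theta(Y^0)$ and $\theta(X^0)\otimes-$ identifies these two kernels respectively with $\ker\theta(f)\otimes\theta(Y^0)$ and $\theta(X^0)\otimes\ker\theta(g)$, and a further application of left exactness to $\ker\theta(f)\otimes-$ identifies their intersection with $\ker\theta(f)\otimes\ker\theta(g)$. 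The coherence axioms for the resulting strict monoidal structure on $\vec\theta$ are then inherited from $\theta$ via the universal property of kernels. The main subtlety lies in part (1): identifying the correct monomorphism $\phi_i(f\otimes g)\hookrightarrow\phi_i(\tN(X^0\otimes Y^0))$ coming from the kernel realisation of $f\otimes g$ in $\Noy\ba$ is the conceptual step; once in place, the rest is a routine diagram chase.
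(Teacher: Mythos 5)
Your proposal is correct and follows essentially the same route as the paper: both arguments transfer the additive equivalence of Theorem~\ref{Thm3Way}(1) to the monoidal level via the assignment $\theta\mapsto\vec\theta$, using that $f\otimes g$ in $\Noy\ba$ is the kernel of $\tN(f\otimes g)$ so that left exactness lets one test monoidality after the monomorphism $\phi(f\otimes g)\hookrightarrow\phi(\tN(X^0\otimes Y^0))$, and that left exactness of $\otimes$ in $\bC$ upgrades the lax structure $\ker\theta(f)\otimes\ker\theta(g)\to\ker\theta(f\otimes g)$ to an isomorphism. You simply spell out the diagram chases that the paper's three-sentence proof leaves implicit.
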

\begin{proof}
If $\phi\in [\Noy\ba,\bC]^{\otimes}_{lex}$, then the isomorphism between $\phi$ and $\overrightarrow{\phi\circ \tN }$ from the proof of Theorem~\ref{Thm3Way}(1) is a natural transformation of (lax) monoidal functors. The assignment $\theta\mapsto \vec{\theta}$ therefore provides an inverse to the functor in part (1), when restricted to the essential image of the latter functor. For part (2) it suffices to observe that the inverse is now defined everywhere.
\end{proof}

\subsubsection{} Now assume that the tensor product in $\bC$ is right exact. Then for two bounded chain complexes $\cX$ and $\cY$ there are canonical morphisms
$$\bigoplus_{a+b=i}H^a(\cX)\otimes H^b(\cY)\;\to\; H^{i}(\cX\otimes \cY).$$
These are isomorphisms if the tensor product is exact, by the K\"unneth theorem for chain complexes

 For a monoidal functor $\theta:\ba\to\bC$, the homological functors
\begin{equation}\label{hatThetaTensor}\theta^0_+:K_+^b\ba\to\bC\quad\mbox{and}\quad\theta^\mZ_\Delta: K^b\ba\,\to\, \bC^{\mZ}\end{equation}
are therefore canonically lax monoidal, where we view $\bC^{\mZ}$ as a monoidal subcategory of $K^b\bC$. If the tensor product on $\bC$ is exact, then $\theta^0_+$ and $\theta^\mZ_\Delta$ are actually monoidal.
Again, also the braiding of (lax) monoidal functors is carried over. 

\subsection{Homological kernels of monoidal functors}
Assume $\ba$ is additive and {\em rigid}.

\subsubsection{}\label{DefHK}
The {\bf canonical homological kernel} of $\ba$ is the sieve $\Sigma^{\ba}:=\Sigma^{\ba}_{\unit}$ on $\unit\in\Noy\ba$ from Definition~\ref{DefHK1}. Concretely, for a morphism $f:X^0\to X^1$ in $\ba$:
$$\Sigma^{\ba}(f)=H_0\left(\ba(X^1,\unit)\to\ba(X^0,\unit)\to\prod_{g: Y\to X^0, f\circ g=0}\ba(Y,\unit)\right).$$
Similarly, for a monoidal functor $\theta:\ba\to\bC$, the sieve $\Sigma^{\ba}\theta=\Sigma^{\ba}_\unit\theta$ on $\unit\in\Noy\ba$ given by
$$\Sigma^{\ba}\theta(f)= H_0\left(\ba(X^1,\unit)\to\ba(X^0,\unit)\to\bC(\ker \theta(f),\unit)\right), $$
is the {\bf homological kernel of $\theta$.} 

\begin{lemma}\label{LemSigmaMon}
For a monoidal functor $\theta:\ba\to\bC$,
\begin{enumerate}
\item $\Sigma^{\ba}\theta\subset\Sigma^{\ba}$ if and only if $\theta$ is faithful.
\item $\Sigma^{\ba}\theta\supset\Sigma^{\ba}$ when $\theta$ is prexact and $\bC$ is AB5.
\end{enumerate}
\end{lemma}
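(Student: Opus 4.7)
The plan is to derive both parts from Proposition~\ref{PropReach} applied with $A = \unit$, noting that $\theta\unit \simeq \unit$ since $\theta$ is monoidal, so that $\Sigma^{\ba}$ (resp.\ $\Sigma^{\ba}\theta$) coincides with $\Sigma^{\ba}_{\unit}$ (resp.\ $\Sigma^{\ba}_{\unit}\theta$) as sieves on $\tN\unit$.

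Part (2) is then immediate: when $\bC$ is AB5 and $\theta$ is prexact, Proposition~\ref{PropReach}(2) gives $\Sigma^{\ba}_{\unit} \subset \Sigma^{\ba}_{\unit}\theta$, which is exactly the desired inclusion.

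For part (1), I would first invoke Proposition~\ref{PropReach}(1), which rephrases $\Sigma^{\ba}\theta \subset \Sigma^{\ba}$ as the condition that $\theta$ is injective on every hom-space $\ba(-,\unit)$. Faithfulness of $\theta$ trivially implies this, so the content is the reverse implication. Here rigidity enters: for any $X, Y \in \ba$ with chosen left dual $(Y^*, \ev_Y, \co_Y)$, the snake relations make
$$\Phi_{X,Y}\,:\,\ba(X,Y)\;\xrightarrow{\sim}\;\ba(X\otimes Y^*,\unit),\qquad f\mapsto \ev_Y\circ(f\otimes \id_{Y^*})$$
a natural isomorphism (with inverse built from $\co_Y$). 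Since $\theta$ is monoidal, $\theta Y^*$ is a left dual of $\theta Y$ in $\bC$ with evaluation and coevaluation $\theta(\ev_Y)$ and $\theta(\co_Y)$, so the analogous isomorphism $\Phi'_{\theta X, \theta Y}: \bC(\theta X, \theta Y) \xrightarrow{\sim} \bC(\theta X \otimes \theta Y^*, \unit)$ exists and satisfies $\Phi'_{\theta X,\theta Y}\circ \theta_{X,Y} = \theta_{X\otimes Y^*,\unit}\circ \Phi_{X,Y}$. Injectivity of $\theta$ on $\ba(X\otimes Y^*,\unit)$ therefore yields injectivity on $\ba(X,Y)$, giving faithfulness of $\theta$.

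The rigidity step in part (1) is the only substantive point; everything else is a direct application of the non-monoidal results from Proposition~\ref{PropReach}.
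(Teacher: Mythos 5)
Your proposal is correct and follows exactly the paper's (much terser) proof: part (2) is a special case of Proposition~\ref{PropReach}(2), and part (1) combines Proposition~\ref{PropReach}(1) with rigidity of $\ba$ and monoidality of $\theta$ to upgrade injectivity on $\ba(-,\unit)$ to faithfulness. The only quibble is cosmetic: with $\ev_Y:Y^\ast\otimes Y\to\unit$ the natural isomorphism should read $\ba(X,Y)\xrightarrow{\sim}\ba(Y^\ast\otimes X,\unit)$, $f\mapsto \ev_Y\circ(\id_{Y^\ast}\otimes f)$, but this does not affect the argument.
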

\begin{proof}
Part (1) follows from Proposition~\ref{PropReach}(1) and the fact that $\ba$ is rigid and $\theta$ monoidal. Part (2) is a special case of Proposition~\ref{PropReach}(2).
\end{proof}

For two posets $(\Lambda,\le)$, $(P,\preceq)$, a function $f:\Lambda\to P$ is an {\bf embedding} if
$$f(\lambda)\preceq f(\mu)\;\Leftrightarrow\;\lambda\le\mu$$
for all $\lambda,\mu\in\Lambda$. Clearly this forces $f$ to be injective.
\begin{prop}\label{BigProp1}
There exist inclusion preserving functions
$$\{\mbox{ideals in $\Noy\ba$}\}\;\xrightarrow{\mu}\;\{\mbox{sieves on $\unit\in\Noy\ba$}\}\;\xleftarrow{\nu}\; \{\mbox{ideals in $K^b\ba$}\}$$
such that, with $C$ the class of monoidal functors $\theta:\ba\to \bA$ to abelian monoidal categories $\bA$ with exact tensor product:
\begin{enumerate}
\item $\mu$ restricts to an embedding on the subset of ideals of the form $\ker\vec\theta$, for $\theta\in C$;
\item $\nu$ restricts to an embedding on the subset of ideals of the form $\ker\theta^{\mZ}_\Delta$, for $\theta\in C$;
\item we have $\mu(\ker\vec\theta)=\Sigma^{\ba}\theta=\nu(\ker\theta_\Delta^{\mZ})$ for all $\theta\in C$.
\end{enumerate}
\end{prop}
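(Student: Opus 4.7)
The plan is to define $\mu$ and $\nu$ directly, verify part~(3) by unwinding the definitions, and reduce parts~(1) and~(2) to rigidity arguments (for $\ba$ and $K^b\ba$ respectively) combined with Proposition~\ref{PropReach}(5). Set $\mu(I)(f):=I(f,\tN\unit)\subset\Noy\ba(f,\tN\unit)$ for an ideal $I$ of $\Noy\ba$; stability under precomposition is exactly the left-ideal condition, so this is a sieve on $\tN\unit$, and $\mu$ is visibly monotone. For $\nu$, each class $\alpha\in\Noy\ba(f,\tN\unit)$ with $f\colon X^0\to X^1$ lifts canonically to a chain-homotopy class $[\tilde\alpha]\in K^b\ba((X^0\to X^1),\unit[0])$ (the relation $\alpha\sim\alpha+\beta\circ f$ on representatives corresponds to chain homotopy by $\beta\colon X^1\to\unit$), and I set $\nu(J)(f):=\{\alpha\,|\,[\tilde\alpha]\in J\}$, which is a monotone assignment of sieves.

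Part~(3) is a direct computation in each case. For $\mu$, since $\theta$ is monoidal $\vec\theta(\tN\unit)=\unit$, so the kernel of the map $\Noy\ba(f,\tN\unit)\to\bC(\vec\theta f,\unit)$ induced by $\vec\theta$ is exactly $\Sigma^{\ba}\theta(f)$. For $\nu$, the only nonzero cohomology of $\unit[0]$ sits in degree zero, so $\theta_\Delta^{\mZ}(\tilde\alpha)$ reduces to the map $H^0(\theta\tilde\alpha)\colon\ker\theta(f)\to\unit$ induced by $\alpha$; this vanishes iff $\alpha\in\Sigma^{\ba}\theta(f)$.

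For part~(1), Proposition~\ref{PropReach}(5) reduces $\ker\vec{\theta_1}\subset\ker\vec{\theta_2}$ to the pointwise containments $\Sigma^{\ba}_A\theta_1\subset\Sigma^{\ba}_A\theta_2$ for all $A\in\ba$. Using the explicit tensor product of $\Noy\ba$, rigidity of $\ba$ provides $\Noy\ba(f,\tN A)\simeq\Noy\ba(f\otimes\tN A^*,\tN\unit)$; since $\vec\theta$ is strong monoidal by Lemma~\ref{LemPrexMon} (the exact-tensor hypothesis is part of the definition of the class $C$) and $\theta A$ acquires $\theta(A^*)$ as a dual in $\bC$, the same rigidity yields $\bC(\vec\theta f,\theta A)\simeq\bC(\vec\theta(f\otimes\tN A^*),\unit)$. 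Combining these produces a natural identification $\Sigma^{\ba}_A\theta(f)\simeq\Sigma^{\ba}\theta(f\otimes\tN A^*)$, so $\Sigma^{\ba}\theta_1\subset\Sigma^{\ba}\theta_2$ propagates to all $A$, giving the embedding.

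Part~(2) proceeds analogously using that $K^b\ba$ is rigid monoidal and $\theta_\Delta^{\mZ}$ is strong monoidal (from the K\"unneth discussion around~\eqref{hatThetaTensor}). Any $\gamma\colon\cX\to\cY$ in $K^b\ba$ lies in $\ker\theta_\Delta^{\mZ}$ iff its transpose $\check\gamma:=\ev_\cY\circ(1_{\cY^*}\otimes\gamma)\colon\cY^*\otimes\cX\to\unit$ does (the snake identities recover $\gamma$ from $\check\gamma$), so the comparison of kernels reduces to morphisms into $\unit[0]$. Such a $\check\gamma$ is represented by $\alpha\colon\cZ^0\to\unit$ with $\alpha\circ d^{-1}_\cZ=0$, and the chain condition forces the induced map $H^0(\theta\cZ)=\ker\theta(d^0_\cZ)/\im\theta(d^{-1}_\cZ)\to\unit$ to coincide with $\alpha|_{\ker\theta(d^0_\cZ)}\to\unit$; thus $\check\gamma\in\ker\theta_\Delta^{\mZ}$ iff $\alpha_\#\in\Sigma^{\ba}\theta(d^0_\cZ)$, where $\alpha_\#$ denotes the class in $\Noy\ba(d^0_\cZ,\tN\unit)$. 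The main technical point is precisely this last identification, made over arbitrary (not just two-term) complexes $\cZ$; the rigidity reductions themselves are routine once one confirms that $\vec\theta$ and $\theta_\Delta^{\mZ}$ are genuinely strong monoidal under the standing exact-tensor hypothesis on $\bC$.
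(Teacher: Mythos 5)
Your proposal is correct and follows essentially the same route as the paper: the same definitions of $\mu$ (restriction of the ideal to $\tN\unit$) and $\nu$ (via the identification $\Noy\ba(f,\tN\unit)\simeq K^b\ba(\cX_f,\unit)$ for the two-term complex), the same use of Proposition~\ref{PropReach}(5) plus rigidity to recover $\ker\vec\theta$ from its restriction to morphisms into the unit, and the same reduction of a general morphism $\cZ\to\unit$ in $K^b\ba$ to the class of its degree-zero component in $\Noy\ba(d^0_\cZ,\tN\unit)$. You merely spell out a few steps the paper leaves implicit (the $H^0$ computation for arbitrary complexes and the identification $\Sigma^{\ba}_A\theta(f)\simeq\Sigma^{\ba}\theta(f\otimes\tN A^\ast)$), so no substantive difference.
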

\begin{proof}
For an ideal $I$ in $\Noy\ba$, the sieve $\mu(I)$ is defined as the restriction $\mu(I)=I(-,\unit)$. 

For $f: X^0\to X^1\in\Noy\ba$, we have the associated complex in degrees 0,1
$$\cX_f:\,0\to X^0\to X^1\to 0\;\,\in \,K_+^b\ba\subset K^b\ba$$
and the functors in \ref{secdiag} yield isomorphisms
$$\Noy\ba(f,\unit)\;\xleftarrow{\sim}\;K_+^b\ba(\cX_f,\unit)\;\xrightarrow{\sim}\; K^b\ba(\cX_f,\unit).$$
The sieve $\nu(J)$ is defined by setting $\nu(J)(f)$ equal to the image of $J(\cX_f,\unit)$ under the isomorphism.


Now we prove (1). Consider an arbitrary morphism in $\Noy\ba$ represented by some $\alpha$ as in~\eqref{ExNoy}. It follows quickly from the definition of $\vec{\theta}$ that $\alpha$ is in the kernel of $\vec{\theta}$ if and only if the corresponding morphism $f\to \tN Y^0$ (the composition of $\alpha$ with $(Y^0\to Y^1)\to \tN Y^0$) is in the kernel, see also \ref{PropReach}(5). Since $\tN Y^0$ has a left dual, it follows that $\alpha$ is in the kernel of $\vec{\theta}$ if and only if the associated morphism
$$\xymatrix{
(Y^0)^\ast\otimes X^0\ar[rr]^-{(Y^0)^\ast\otimes f}\ar[d]&&(Y^0)^\ast\otimes X^1\\
\unit\ar[rr]&&0
}$$
is in the kernel of $\vec{\theta}$, or equivalently in $\mu(\ker\vec{\theta})$. So $\mu(\ker\vec\theta)$ retains all information in $\ker\vec\theta$.

Now we prove (2). Since $K^b\ba$ is rigid, the kernel of the monoidal functor $\theta_\Delta^\mZ$ is determined by which morphisms $(\cX,d)\to\unit$ are sent to zero. Using the definition of $\theta_\Delta^\mZ$, such a morphism is sent to zero if and only if the morphism $d^0\to\unit$ in $\Noy\ba$ is in $\nu(\ker\theta_\Delta^\mZ)$.

Part (3) follows from the definitions.
\end{proof}

\subsection{Enriched versions}

In all previous sections we considered $\mZ$-linear (preadditive) categories. All results can be straightforwardly adapted to $K$-linear categories for commutative rings $K$. Note that for a $K$-linear category $\ba$, $\PSh\ba$ is equivalent to the category of $K$-linear functors $\ba^{\op}\to K\Mod$ and $K$-linear Grothendieck topologies coincide with the additive topologies.

For example, if we define ${\ABt}^{\otimes}$ to be the 2-category of all monoidal $K$-linear AB3 categories with $K$-linear cocontinuous tensor product, let $\ba$ be $K$-linear monoidal with $K$-linear tensor product and reserve the notation $[-,-]$ for categories of $K$-linear functors, Proposition~\ref{PropMon1} remains valid as stated. In the remainder of the paper we will use such enriched analogues of previous results without further comment.


\vspace{2mm}
\part{Local abelian envelopes}\label{Part2}

For the remainder of the paper, we let $k$ be a field. All functors are assumed to be $k$-linear and correspondingly, from now on, the notation $[-,-]$ will only be used to denote categories of $k$-linear functors.

\section{Universal tensor functors of fixed homological kernel}\label{LocAbEnvTheory}
For the entire section, let $\ba$ be a rigid monoidal category over $k$ (see~\ref{Defk}). By a tensor category we will understand a tensor category over some field extension $K/k$ (including $K=k$). Unless further specified, $\bT$ is assumed to be such a tensor category.


\subsection{The tensor category associated to a prexact monoidal functor}

\begin{theorem}\label{FlatMon}For a prexact monoidal functor $\theta:\ba\to\bT$, there exists a tensor category $\bU$ with $\Ind\bU\simeq \Sh(\ba,\cT_\theta)$, over an intermediary field extension $k<L<K$, with a monoidal functor $\theta':\ba\to\bU$ which yields an equivalence
$$-\circ\theta':\;\Tens^{\uparrow}(\bU,\bT_1)\;\stackrel{\sim}{\to}\;\RMon^{\uparrow}(\ba/{\ker\theta},\bT_1)$$
for each tensor category $\bT_1$. Here $\cT_\theta$, so in particular also $\bU$, depends only on $\ker\theta$.
\end{theorem}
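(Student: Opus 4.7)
The plan is to realise $\bU$ as the (compact/dualizable) tensor subcategory of the sheaf category $\Sh(\ba,\cT_\theta)$ and then deduce the universal property from the monoidal sheaf representability of Proposition~\ref{PropMon1}.

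First I would apply (the $K$-linear enrichment of) Proposition~\ref{PropMon1} to the prexact monoidal composite $\ba \xrightarrow{\theta} \bT \hookrightarrow \Ind\bT$; by Example~\ref{ExTopPrex}(2) the topology remains $\cT_\theta$. This produces an essentially unique faithful exact cocontinuous monoidal functor $F:\Sh(\ba,\cT_\theta)\to\Ind\bT$ satisfying $F\circ\tZ\simeq\theta$, where the source is a biclosed monoidal Grothendieck category and $\tZ$ is monoidal with AB3-tight image (the prexact analogue of Theorem~\ref{ThmUniHSh}(3)) consisting of dualizable objects (since $\ba$ is rigid).

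Second I would locate the field $L$ and build $\bU$. Set $L := \End_{\Sh(\ba,\cT_\theta)}(\unit)$: being faithful and exact between abelian categories, $F$ reflects isomorphisms, so the induced injective $k$-algebra map $L \hookrightarrow K$ sends every non-zero element to a unit, making $L$ a field with $k \subseteq L \subseteq K$. I would then take $\bU := F^{-1}(\bT) \subseteq \Sh(\ba,\cT_\theta)$: exactness of $F$ and abelianness of $\bT$ inside $\Ind\bT$ (closure under finite (co)kernels, tensor, and duals) make $\bU$ an $L$-linear abelian monoidal subcategory containing $\tZ(\ba)$, with $\unit$ simple (propagated from $\unit_\bT$ through faithfulness of $F$). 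The crucial rigidity of $\bU$ amounts to the identification of $F^{-1}(\bT)$ with the dualizable objects of $\Sh(\ba,\cT_\theta)$, which I would deduce from recognition theorems for ind-completions of tensor categories in~\cite{CEOP, PreTop}; this makes $\bU$ a tensor category over $L$, and the equivalence $\Ind\bU \simeq \Sh(\ba,\cT_\theta)$ then follows from compactness of dualizable objects, AB3-tightness of $\tZ$, and Lemma~\ref{LemTight}. Set $\theta':\ba\to\bU$ to be the corestriction of $\tZ$.

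Third I would verify the universal property. Given a faithful monoidal $\bar\vartheta:\ba/\ker\theta\to\bT_1$, pre-composing with the quotient $\ba\tto\ba/\ker\theta$ yields $\vartheta:\ba\to\bT_1$ with $\ker\vec\vartheta=\ker\vec\theta$; Proposition~\ref{BigProp1} (parts (1) and (3)) then gives $\Sigma^{\ba}\vartheta=\Sigma^{\ba}\theta$, and the topology of a prexact monoidal functor can be read off from its homological kernel on $\unit$ by tensoring with duals (using rigidity of $\ba$), so $\cT_\vartheta=\cT_\theta$. This establishes the final clause of the theorem. Proposition~\ref{PropMon1} now lifts $\vartheta$ to an essentially unique faithful exact cocontinuous monoidal $G:\Sh(\ba,\cT_\theta)\to\Ind\bT_1$, and restricting $G$ to $\bU=F^{-1}(\bT)$ produces the desired tensor functor $\bU\to\bT_1$, with values landing in $\bT_1$ because $G$ preserves dualizability. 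Full faithfulness of the equivalence $-\circ\theta'$ on $1$- and $2$-morphisms is then inherited from the universal property of $F$.

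The main obstacle is the construction of $\bU$ and the identification $\Ind\bU\simeq\Sh(\ba,\cT_\theta)$: dualizable objects in a Grothendieck tensor category are not generally closed under kernels, but in the presence of a faithful exact cocontinuous monoidal functor $F$ to $\Ind\bT$ the coincidence $F^{-1}(\bT)=\{\text{dualizable objects}\}$ should hold, and this is the technical heart of the theorem, to be established by invoking or adapting recognition theorems from~\cite{CEOP, PreTop} for ind-completions of tensor categories inside Grothendieck tensor categories.
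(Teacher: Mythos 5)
Your first two steps track the paper's own argument: the paper likewise applies Proposition~\ref{PropMon1} to the composite $\ba\to\bT\hookrightarrow\Ind\bT$ to obtain the faithful exact cocontinuous monoidal functor $\Sh(\ba,\cT_\theta)\to\Ind\bT$, and then simply cites \cite[Proposition~3.3.2]{CEOP} for the identification $\Sh(\ba,\cT_\theta)\simeq\Ind\bU$. Your explicit candidate $\bU=F^{-1}(\bT)$, with the identification of this preimage with the dualizable objects deferred to the recognition theorems of \cite{CEOP,PreTop}, is an acceptable stand-in for that citation.

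The genuine gap is in your third step. You assert that precomposing an arbitrary faithful monoidal $\bar\vartheta:\ba/\ker\theta\to\bT_1$ with the quotient yields $\vartheta$ with $\ker\vec{\vartheta}=\ker\vec{\theta}$. Equality of the ordinary kernels does not imply equality of the homological kernels: the central theme of the paper (Theorem~A, Theorem~\ref{Thmp}) is precisely that many distinct homological kernels are compatible with a single ordinary kernel. Moreover, to invoke Proposition~\ref{PropMon1} and produce $G$ you need $\vartheta$ to be \emph{prexact} with $\cT_\vartheta=\cT_\theta$, and prexactness of $\vartheta$ is nowhere established in your plan. The statements that would rescue you --- that once one prexact faithful monoidal functor exists, every faithful monoidal functor out of $\ba/\ker\theta$ to a tensor category is prexact with the same topology and homological kernel --- are exactly Corollaries~\ref{CorFlatKernel} and~\ref{CorFlatAll} and Example~\ref{ExPrexHK}, all of which are \emph{deduced from} Theorem~\ref{FlatMon}; using them here would be circular. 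The paper avoids this by importing the universal property wholesale from \cite[Theorem~4.4.1]{PreTop} (the same external result underlying the fact that faithful monoidal functors between tensor categories are exact). That nontrivial input, rather than a formal manipulation of kernels, is what makes essential surjectivity of $-\circ\theta'$ work, and it is the missing ingredient in your proposal.
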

\begin{proof}
By Proposition~\ref{PropMon1} we have the biclosed Grothendieck category $\Sh(\ba,\cT_\theta)$ with the exact cocontinuous faithful monoidal functor $T:\Sh(\ba,\cT_\theta)\to\Ind\bT$ such that $T\circ \tZ\simeq\theta$ (where we leave out the inclusion functor $\bT\to\Ind\bT$). It follows from \cite[Proposition~3.3.2]{CEOP} that $\Sh(\ba,\cT_\theta)\simeq \Ind\bU$ for some tensor category $\bU$. The universal property of $\theta'$ then follows immediately from \cite[Theorem~4.4.1]{PreTop}. This universal property shows that $\bU$ only depends on $\ker\theta$. Alternatively, that $\cT_\theta$ depends only on $\ker\theta$ is in \cite[Theorem~4.4.1]{PreTop}.
\end{proof}

\begin{corollary}\label{CorFlatKernel}
For two prexact monoidal functors $\theta:\ba\to\bT_1$ and $\phi:\ba\to \bT_2$ to tensor categories, their kernels are either the same or incomparable.
\end{corollary}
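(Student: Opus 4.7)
The plan is to show that the hypothetical inclusion $\ker\theta\subseteq\ker\phi$ forces equality by constructing a tensor functor $\bU_\theta\to\bT_2$ that extends $\phi$, and then invoking the fact (recalled in Section~\ref{Defk}) that every $k$-linear exact monoidal functor between tensor categories is faithful. The key idea is the following observation: while neither of the two universal properties of $\bU_\theta$ and $\bU_\phi$ separately allows one to relate the two functors, the \emph{product} functor $(\theta,\phi)\colon\ba\to\bT_1\times\bT_2$ has kernel equal to $\ker\theta$ under the hypothesis, so it is governed by $\bU_\theta$ alone.

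Concretely, I would proceed as follows. First, one checks that $(\theta,\phi)$ is prexact and monoidal for the componentwise monoidal structure on $\bT_1\times\bT_2$: monoidality is immediate, and prexactness follows because the left Kan extension $\PSh\ba\to\Ind\bT_1\times\Ind\bT_2$ decomposes as the product of the individual exact extensions. The hypothesis gives $\ker(\theta,\phi)=\ker\theta\cap\ker\phi=\ker\theta$, so $\cT_{(\theta,\phi)}=\cT_\theta$ by Theorem~\ref{FlatMon} (which states that $\cT$ depends only on the kernel). Applying the monoidal universal property (Proposition~\ref{PropMon1}) to $(\theta,\phi)$ therefore produces a faithful, exact, cocontinuous monoidal functor $T\colon\Ind\bU_\theta\simeq\Sh(\ba,\cT_\theta)\to\Ind\bT_1\times\Ind\bT_2$ with $T\circ\theta'\simeq(\theta,\phi)$, where $\theta'\colon\ba\to\bU_\theta$ is the universal prexact monoidal functor. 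Composing with the projection to $\Ind\bT_2$ yields an exact cocontinuous monoidal $T_2\colon\Ind\bU_\theta\to\Ind\bT_2$ with $T_2\circ\theta'\simeq\phi$.

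Next, I would argue that $T_2$ restricts to $T_2'\colon\bU_\theta\to\bT_2$: since $\bU_\theta$ is generated by $\theta'(\ba)$ under the operations of finite direct sums, subobjects, quotients, and tensor products (implicit in the construction of $\bU_\theta$ in Theorem~\ref{FlatMon}), and since $T_2$ preserves these operations while sending $\theta'(\ba)$ into $\phi(\ba)\subseteq\bT_2$, the image of $T_2|_{\bU_\theta}$ lies in $\bT_2$. The functor $T_2'$ is then a $k$-linear exact monoidal functor between tensor categories, hence a tensor functor, and therefore faithful. This yields
$$\ker\phi\;=\;\ker(T_2'\circ\theta')\;=\;\ker\theta'\;=\;\ker\theta,$$
as required. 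The main obstacle I anticipate is the restriction step, i.e.\ verifying rigorously that $T_2(\bU_\theta)\subseteq\bT_2$; this is a structural statement about how the compact (``finite'') objects of $\Sh(\ba,\cT_\theta)$ are built from the image of $\ba$, and it requires a careful appeal to the construction of $\bU_\theta$ inside its ind-completion.
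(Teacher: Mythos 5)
Your argument has a genuine gap at the step where you pass from $\ker(\theta,\phi)=\ker\theta$ to $\cT_{(\theta,\phi)}=\cT_\theta$. Unwinding the definition of the topology of a prexact functor, one has $\cT_{(\theta,\phi)}=\cT_\theta\cap\cT_\phi$ (a family $Y_\beta\to X$ is covering for the product functor iff it becomes jointly epimorphic in \emph{both} $\bT_1$ and $\bT_2$), so what you actually need is the inclusion $\cT_\theta\subseteq\cT_\phi$. You justify this by appealing to the clause of Theorem~\ref{FlatMon} that ``$\cT_\theta$ depends only on $\ker\theta$'', but that clause concerns prexact monoidal functors \emph{to tensor categories}, and $\bT_1\times\bT_2$ is not one (its unit has endomorphism algebra $k\times k$); nor does the clause say anything about what happens when one kernel is strictly contained in another. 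For general AB5 targets the principle ``kernel determines topology'' is simply false: the inclusion $\mZ\free\hookrightarrow\Ab$ and the functor $\mQ\otimes_{\mZ}-:\mZ\free\to\mQ\Mod$ are both faithful and prexact, yet induce different topologies (multiplication by $2$ generates a covering sieve for the second but not the first). The implication $\ker\theta\subseteq\ker\phi\Rightarrow\cT_\theta\subseteq\cT_\phi$ for tensor-category targets is precisely the substantive rigidity input that the paper outsources to \cite[Corollary~4.4.4]{PreTop} (after replacing $\theta,\phi$ by the universal ``creators'' via Theorem~\ref{FlatMon}); asserting it is essentially assuming the corollary.

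The remainder of your argument is sound and, granting that one step, would give a clean self-contained deduction: Proposition~\ref{PropMon1} does produce $T$ on $\Sh(\ba,\cT_\theta)\simeq\Ind\bU_\theta$, and the restriction step you worry about is actually the easy part — an exact cocontinuous monoidal functor sends rigid objects to rigid objects, $\bU_\theta$ consists of rigid objects of its ind-completion, and $\bT_2$ is exactly the category of rigid objects of $\Ind\bT_2$ by \cite[Lemma~1.3.7]{AbEnv}, so $T_2(\bU_\theta)\subseteq\bT_2$ with no further analysis of how $\bU_\theta$ is generated. Faithfulness of the resulting tensor functor then closes the argument as you say. So the missing ingredient is isolated and specific: a proof that, for faithful prexact monoidal functors to tensor categories, an inclusion of kernels forces an inclusion of the induced Grothendieck topologies.
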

\begin{proof}
By Theorem~\ref{FlatMon} we can replace $\theta$ and $\phi$ by functors ($\theta'$ and $\phi'$) which are creators in the terminology of \cite[4.1.2]{PreTop}. The claim then follows from \cite[Corollary~4.4.4]{PreTop}.
\end{proof}

\begin{corollary}\label{CorFlatAll}
If there exists a faithful and prexact monoidal functor $\theta:\ba\to\bT$, then every faithful monoidal functor from $\ba$ to a tensor category is prexact. If furthermore $\bT$ is a tensor category over $k$, then $\ba$ admits a weak abelian envelope.
\end{corollary}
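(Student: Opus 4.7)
The plan is to deduce both assertions from Theorem~\ref{FlatMon}, applied to the given faithful prexact monoidal $\theta:\ba\to\bT$. This will produce a tensor category $\bU$ over some intermediate field $k\le L\le K$, together with a prexact monoidal functor $\theta':\ba\to\bU$ satisfying $\Ind\bU\simeq\Sh(\ba,\cT_\theta)$ and the universal property
$$-\circ\theta':\;\Tens^{\uparrow}(\bU,\bT_1)\;\stackrel{\sim}{\to}\;\RMon^{\uparrow}(\ba/\ker\theta,\bT_1).$$
The key observation is that, since $\theta$ is faithful, $\ker\theta$ (the kernel of $\vec\theta$ in $\Noy\ba$) contains no nonzero morphism of $\ba\subset\Noy\ba$, so $\ba/\ker\theta\simeq\ba$ and the universal property simplifies to $\Tens^{\uparrow}(\bU,\bT_1)\simeq\RMon^{\uparrow}(\ba,\bT_1)$ for every tensor category $\bT_1$.

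Given this, the first assertion is immediate. For any faithful monoidal $\phi:\ba\to\bT'$, the above equivalence will supply a tensor functor $F:\bU\to\bT'$ with $\phi\simeq F\circ\theta'$. The composite $\ba\xrightarrow{\theta'}\bU\hookrightarrow\Ind\bU\simeq\Sh(\ba,\cT_\theta)$ is identified with the sheafified Yoneda functor $\tZ$, which is prexact by Example~\ref{ExPrex}(1). Moreover, $F$ extends canonically to an exact cocontinuous functor $\hat F:\Ind\bU\to\Ind\bT'$, so Example~\ref{ExPrex}(2) applied to $\hat F\circ\tZ$ will yield that $\phi$ is prexact.

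For the second assertion, suppose in addition that $\bT$ is over $k$, i.e.\ $K=k$. Combined with $k\le L\le K$ this forces $L=k$, so $\bU$ is itself a tensor category over $k$. For any tensor category $\bT_1$ over $k$, the hom-categories $\Tens^{\uparrow}(\bU,\bT_1)$ and $\RMon^{\uparrow}(\ba,\bT_1)$ coincide with $\Tens(\bU,\bT_1)$ and $\RMon(\ba,\bT_1)$ respectively. Hence $\theta':\ba\to\bU$ represents the 2-functor $\RMon(\ba,-):\Tens\to\Cat$, which is precisely the definition of $\bU$ being a weak abelian envelope of $\ba$.

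The main (and essentially only) subtlety is justifying the identification $\ba/\ker\theta\simeq\ba$ when $\theta$ is faithful. This should follow directly from the construction of $\vec\theta$ restricted to $\tN(\ba)\subset\Noy\ba$, where it recovers $\theta$ itself; but since the notation $\ba/\ker\theta$ is not spelled out in the excerpt of Theorem~\ref{FlatMon}, it will require careful bookkeeping. Once this identification is in hand, the rest is a mechanical assembly of Theorem~\ref{FlatMon}, Example~\ref{ExPrex}, and the definition of a weak abelian envelope.
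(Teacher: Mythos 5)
Your proof is correct and follows essentially the same route as the paper: observe that $\theta'$ is prexact, use the equivalence of Theorem~\ref{FlatMon} to factor any faithful monoidal functor through $\bU$, and apply Example~\ref{ExPrex}(2). One small clarification: in Theorem~\ref{FlatMon} the symbol $\ker\theta$ denotes the ordinary tensor ideal of morphisms of $\ba$ annihilated by $\theta$ (not the kernel of $\vec\theta$ in $\Noy\ba$), so $\ba/\ker\theta=\ba$ is immediate from faithfulness and requires no bookkeeping in $\Noy\ba$.
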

\begin{proof}
Firstly we observe that $\theta'$ in Theorem~\ref{FlatMon} is prexact. This follows either by construction, or from Example~\ref{ExPrex}(2). That all faithful monoidal functors to tensor categories are prexact now follows from the equivalence in Theorem~\ref{FlatMon} and again Remark~\ref{ExPrex}(2).
\end{proof}

\begin{remark}
A direct construction of $\bU$ (without reference to $\cT_\theta$) is given in \cite[\S 4]{PreTop}, see for instance \cite[Remark~4.4.3]{PreTop}. To present one of the consequences, set $\bb:=\ba/{\ker\theta}$. The category $\bU$ is a tensor category over $k$ if for each non-zero $u:U\to\unit$ in $\bb$, the following sequence is exact:
$$0\to \bb(\unit,\unit)\xrightarrow{-\circ u} \bb(U,\unit)\xrightarrow{-\circ(u\otimes U-U\otimes u)}\bb(U\otimes U,\unit).$$
\end{remark}

\begin{example}\label{TrivEx}
Consider a fully faithful monoidal functor $\theta:\ba\to\bT$ such that every object in $\bT$ is a quotient of one in the image  (or, more generally, $\theta$ satisfies the conditions in \cite[Lemma~4.1.3]{PreTop}), then $\theta$ is prexact by Corollary~\ref{CorFlat}.
\end{example}

\begin{prop}\label{BigProp2}
 The following conditions are equivalent for monoidal functors $\theta,\phi$ from $\ba$ to tensor categories.
\begin{enumerate}[label=(\alph*)]
\item $\Sigma^{\ba}\theta=\Sigma^{\ba}\phi$;
\item $\ker\vec{\theta}=\ker\vec{\phi}$;
\item $\ker\theta_\Delta^\mZ=\ker\phi_\Delta^\mZ$;
\item $\HT_{\theta}=\HT_{\phi}$.
\end{enumerate}

\end{prop}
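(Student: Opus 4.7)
The plan is to establish (a) $\Leftrightarrow$ (b) $\Leftrightarrow$ (c) in one stroke via Proposition~\ref{BigProp1}, and then to fit (d) into the picture using Theorem~\ref{ThmAlternative}(3) together with the universality statement in Theorem~\ref{FlatMon}. First, since every tensor category has an exact tensor product, both $\theta$ and $\phi$ belong to the class $C$ appearing in Proposition~\ref{BigProp1}. That proposition simultaneously records the identities $\mu(\ker\vec{\theta}) = \Sigma^{\ba}\theta = \nu(\ker\theta_\Delta^{\mZ})$ and the fact that $\mu$ and $\nu$ are embeddings (in particular injective) when restricted to the subsets of kernels coming from functors in $C$. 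Injectivity therefore yields (a) $\Leftrightarrow$ (b) and (a) $\Leftrightarrow$ (c) immediately, and in particular (b) $\Leftrightarrow$ (c).

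For the implication (d) $\Rightarrow$ (c), I would invoke the bijection in Theorem~\ref{ThmAlternative}(3) between $\HT_\theta$ and the topology $\cT_{\theta_\Delta^{\mZ}}$: condition (d) forces $\cT_{\theta_\Delta^{\mZ}} = \cT_{\phi_\Delta^{\mZ}}$. Since $\theta_\Delta^{\mZ}$ is homological and hence prexact by Corollary~\ref{TrFlat}(2), the ``topology determines kernel'' implication supplied by Corollary~\ref{CorTriv} then delivers (c).

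The main obstacle is the converse (c) $\Rightarrow$ (d), which requires upgrading an equality of kernels to an equality of topologies, and is the step where the rigid monoidal structure has to be used essentially (one cannot apply Theorem~\ref{FlatMon} directly to $\vec{\theta}$, as $\Noy\ba$ is not rigid). My plan here is to apply Theorem~\ref{FlatMon} instead to the functor $\theta_\Delta^{\mZ}:K^b\ba\to\bT^{\mZ}$. The category $K^b\ba$ is rigid monoidal over $k$ with $\End(\unit) = k$, and $\bT^{\mZ}$ --- equipped with the graded tensor product $X\langle i\rangle\otimes Y\langle j\rangle = (X\otimes Y)\langle i+j\rangle$ inherited from $K^b\bT$ --- is a tensor category over $K$. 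The remaining checks, namely that $\theta_\Delta^{\mZ}$ is prexact (being homological) and monoidal (thanks to exactness of $\otimes$ on $\bT$; cf. the discussion around~\eqref{hatThetaTensor}), are routine. Theorem~\ref{FlatMon} then asserts that $\cT_{\theta_\Delta^{\mZ}}$ depends only on $\ker\theta_\Delta^{\mZ}$, so (c) forces $\cT_{\theta_\Delta^{\mZ}} = \cT_{\phi_\Delta^{\mZ}}$, and a final appeal to Theorem~\ref{ThmAlternative}(3) transports this equality back to $\HT_\theta = \HT_\phi$.
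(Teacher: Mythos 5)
Your proposal is correct and follows essentially the same route as the paper: (a)$\Leftrightarrow$(b)$\Leftrightarrow$(c) from Proposition~\ref{BigProp1}, and (c)$\Rightarrow$(d) by applying Theorem~\ref{FlatMon} to the prexact monoidal functor $\theta_\Delta^{\mZ}$ on the rigid category $K^b\ba$ and transporting back via Theorem~\ref{ThmAlternative}(3). The only cosmetic difference is that the paper closes the loop with (d)$\Rightarrow$(b) by applying Corollary~\ref{CorTriv} directly to $\vec{\theta}$ and $\vec{\phi}$, whereas you prove (d)$\Rightarrow$(c) via $\theta_\Delta^{\mZ}$; both are valid.
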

\begin{proof}
The equivalence between conditions (a), (b) and (c) is an immediate consequence of Proposition~\ref{BigProp1}. Now we prove that (c) implies (d). Since $K^b\ba$ is rigid and $\theta_\Delta^\mZ$ and $\phi_\Delta^\mZ$ are prexact, $\ker\theta_\Delta^\mZ=\ker\theta_\Delta^\mZ$ implies that the topologies of $\theta_\Delta^\mZ$ and $\phi_\Delta^\mZ$ coincide, by Theorem~\ref{FlatMon}. The latter then implies that $\HT_{\theta}=\HT_{\phi}$, by Theorem~\ref{ThmAlternative}(3).

That (d) implies (b) follows from application of Corollary~\ref{CorTriv} to $\vec\theta$ and $\vec\phi$.
\end{proof}

\begin{remark}
Similarly one can prove that the kernels of $\theta^{0}_\Delta$ and $\theta^{\mZ}_\Delta$ determine one another, extending the list of equivalent conditions in Proposition~\ref{BigProp2}.
\end{remark}

Theorem~\ref{FlatMon}, together with the following example, allows us to recover several results from \cite{BEO, AbEnv, EHS}. 
Recall that $\ba$ is {\bf self-splitting} if for every morphism $f$ in $\ba$, there exists $0\not=V\in \ba$ such that $V\otimes f$ is split. We say that a morphism is split if is a direct sum of an isomorphism and a zero morphism.  
\begin{example}\label{ExSelf}
If $\ba$ is self-splitting, then every monoidal functor $\theta:\ba\to\bT$ is prexact. Indeed, for $f:Y\to X$ in $\ba$, take $0\not=V\in \ba$ for which there is a split exact sequence 
$$0\to Z'\xrightarrow{g'} V\otimes Y\xrightarrow{V\otimes f} V\otimes X$$
in $\ba$. It follows easily that 
$$V^\ast\otimes Z'\xrightarrow{(\ev_V\otimes Y)\circ (V^\ast\otimes g')} Y\xrightarrow{f} X$$
is sent to an acyclic sequence under $\theta$, so we can apply Corollary~\ref{CorFlat}.
\end{example}

\subsection{Universal tensor categories}
\subsubsection{}

For a subsieve $\gamma\subset\Sigma^{\ba}$, see \ref{DefHK}, we denote by $\RMon^{\uparrow}_\gamma(\ba,\bT)$ the category of (faithful) monoidal functors with homological kernel $\gamma$, which is well defined by Lemma~\ref{LemSigmaMon}(1).
The set of subsieves $\gamma\subset\Sigma^{\ba}$ for which there exist such a faithful monoidal functor is denoted by $\Sw(\ba)$. In other words, the set  $\Sw(\ba)$ contains the `admissible homological kernels'. We also consider the subset $\Sw_0(\ba)\subset\Sw(\ba)$ of those homological kernels which can be realised using faithful monoidal functors to tensor categories over $k$.

\begin{theorem}\label{ThmUniTensor}
\begin{enumerate}
\item The 2-functor $\RMon^{\uparrow}(\ba,-):\Tens^{\uparrow}\to\Cat$ decomposes as 
$$\RMon^{\uparrow}(\ba,-)\;=\;\coprod_{\gamma\in\Sw(\ba)}\RMon^{\uparrow}_\gamma(\ba,-).$$
\item For each $\gamma\in\Sw(\ba)$, the 2-functor $\RMon^{\uparrow}_\gamma(\ba,-)$ is representable by a tensor category $\bT_\gamma$. Moreover $\Ind\bT_\gamma\simeq\HSh(\ba,\cR)$ for a monoidal Grothendieck topology $\cR$ on $\Noy\ba$.
\item There are no inclusions between the different subfunctors of $\Sigma^{\ba}$ in $\Sw(\ba)$.
\end{enumerate}
\end{theorem}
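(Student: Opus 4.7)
The plan is to reduce each part to the machinery of Sections~\ref{SecHT} and~\ref{SecMon}, with Theorem~\ref{FlatMon} as the bridge to tensor categories. For part~(1), Lemma~\ref{LemSigmaMon}(1) attaches to each $\theta\in\RMon^{\uparrow}(\ba,\bT)$ its homological kernel $\Sigma^{\ba}\theta\subseteq\Sigma^{\ba}$, and the standard fact that monoidal natural transformations between monoidal functors with rigid source are automatically isomorphisms ensures that every morphism in $\RMon^{\uparrow}(\ba,\bT)$ preserves the homological kernel. Declaring $\Sw(\ba)$ to be the set of achieved kernels then yields the required decomposition.

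For part~(2), fix $\gamma\in\Sw(\ba)$ realised by some $\theta\in\RMon^{\uparrow}_\gamma(\ba,\bT)$. The induced functor $\vec\theta:\Noy\ba\to\Ind\bT$ is left exact by Theorem~\ref{Thm3Way}(1), and hence prexact by Corollary~\ref{TrFlat}(1) (since $\Noy\ba$ has kernels, by Lemma~\ref{LemK+K}(3)); it is moreover monoidal because the tensor product on $\bT$ is exact. Consequently $\HT_\theta=\cT_{\vec\theta}$ is a monoidal Grothendieck topology on $\Noy\ba$ by \cite[Theorem~3.5.5(c)]{PreTop}. Applying the enriched version of Proposition~\ref{PropMon1} to $\Noy\ba$ in place of $\ba$, and combining with Theorem~\ref{Thm3Way}(1) to translate between prexact monoidal functors out of $\Noy\ba$ and monoidal functors out of $\ba$ with matching homological kernel, exhibits $\HSh(\ba,\HT_\theta)$ as a biclosed monoidal Grothendieck category representing $\RMon^{\uparrow}_\gamma(\ba,-)$ on monoidal AB3 targets. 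Invoking \cite[Proposition~3.3.2]{CEOP}, as in the proof of Theorem~\ref{FlatMon}, then produces a tensor category $\bT_\gamma$ with $\Ind\bT_\gamma\simeq\HSh(\ba,\HT_\theta)$, and \cite[Theorem~4.4.1]{PreTop} restricts the representation to tensor targets.

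For part~(3), suppose $\gamma_1\subsetneq\gamma_2$ with both in $\Sw(\ba)$, and let $\tau_i:\ba\to\bT_{\gamma_i}$ denote the corresponding universal faithful prexact monoidal functors from part~(2). Via the embedding $\mu$ of Proposition~\ref{BigProp1}(1), the strict inclusion of homological kernels translates into $\ker\vec\tau_1\subsetneq\ker\vec\tau_2$; restricting along $\tN$ yields $\ker\tau_1\subseteq\ker\tau_2$, and this inclusion must be strict because $\vec\tau_i$, and therefore $\gamma_i$, is determined by $\ker\tau_i$ via the universal property in Theorem~\ref{FlatMon}. This contradicts Corollary~\ref{CorFlatKernel}, according to which kernels of prexact monoidal functors to tensor categories are either equal or incomparable.

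The main obstacle lies in part~(2): upgrading the unenriched multi-representability of Theorem~\ref{ThmUniHSh} to the $k$-linear monoidal setting and realising the resulting universal monoidal Grothendieck category as $\Ind\bT_\gamma$ for a genuine tensor category $\bT_\gamma$. The supporting ingredients are all in place, but the argument shuttles between $\ba$, $\Noy\ba$, $\HSh(\ba,\HT_\theta)$ and $\bT_\gamma$, and at each step one must verify that the monoidal structures, the $k$-linear enrichment, faithfulness, and exactness are preserved.
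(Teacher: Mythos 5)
Parts (1) and (2) follow the paper's route in outline, but two steps in part (2) are genuinely missing. First, you cannot invoke \cite[Proposition~3.3.2]{CEOP} ``as in the proof of Theorem~\ref{FlatMon}'': there the generators of $\Sh(\ba,\cT_\theta)$ are images of rigid objects of $\ba$, whereas the generators of $\HSh(\ba,\cR)=\Sh(\Noy\ba,\cR)$ are the sheafified representables of objects of $\Noy\ba$, i.e.\ kernels of morphisms between rigid objects --- and $\Noy\ba$ is not rigid. One must first prove that such kernels are themselves rigid, which uses exactness of the tensor product on $\HSh(\ba,\cR)$ (deduced from the faithful exact cocontinuous monoidal functor to $\Ind\bT$); this is precisely Lemma~\ref{LemCEOP}, which the paper proves for exactly this purpose. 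Second, $\RMon^{\uparrow}_\gamma(\ba,-)$ is indexed by the homological kernel $\gamma=\Sigma^{\ba}\theta$ (a sieve on $\unit$), while the representability you extract from Proposition~\ref{PropMon1} is indexed by the homological topology $\cR=\HT_\theta$; matching the two requires the equivalence (a)$\Leftrightarrow$(d) of Proposition~\ref{BigProp2}, which is not automatic and which you do not invoke. (For part (1), note also that the decomposition of the \emph{2-functor} needs compatibility with 1-morphisms of $\Tens^{\uparrow}$, i.e.\ Remark~\ref{RemDefSigma}(2), not just with the 2-morphisms you discuss.)

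Part (3) as written does not work. The universal functors $\tau_i:\ba\to\bT_{\gamma_i}$ are faithful but \emph{not} prexact in general --- if they were, $\Sw(\ba)$ would be a singleton by Theorem~\ref{FlatMon} --- so Corollary~\ref{CorFlatKernel} does not apply to them. Worse, since they are faithful, $\ker\tau_1=0=\ker\tau_2$, so ``restricting along $\tN$'' destroys the strict inclusion and there is no contradiction to be found at the level of $\ba$: the information separating $\gamma_1$ from $\gamma_2$ lives in $\ker\vec{\tau}_i$, not in $\ker\tau_i$. One also cannot apply Corollary~\ref{CorFlatKernel} to $\vec{\tau}_i$ directly, since $\Noy\ba$ is not rigid and $\Ind\bT_{\gamma_i}$ is not a tensor category. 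The fix is the paper's: pass to the rigid monoidal category $K^b\ba$ and the prexact monoidal functors $\theta^{\mZ}_\Delta:K^b\ba\to\bT^{\mZ}$ into the tensor categories $\bT^{\mZ}$, apply Corollary~\ref{CorFlatKernel} there, and transport the incomparability back to $\Sw(\ba)$ via the embedding $\nu$ of Proposition~\ref{BigProp1}(2), rather than $\mu$.
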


We start the proof with the following lemma.
\begin{lemma}\label{LemCEOP}
Consider a $k$-linear biclosed Grothendieck category $\bC$ with a faithful exact cocontinuous monoidal functor $\bC\to\Ind\bT$. If $\bC$ has a class $S$ of rigid objects such that every object in $\bC$ is a quotient of a coproduct of kernels between objects in $S$, then $\bC$ is itself the ind-completion of a tensor category.
\end{lemma}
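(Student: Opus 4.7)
The plan is to reduce to \cite[Proposition~3.3.2]{CEOP}, the same tool used in the proof of Theorem~\ref{FlatMon}, which identifies a biclosed Grothendieck category $\bC$ equipped with a faithful exact cocontinuous monoidal functor to $\Ind\bT$ as the Ind-completion of a tensor category, provided $\bC$ is generated by rigid objects in the appropriate sense. So the task becomes: upgrade the hypothesized generating family of kernels of morphisms in $S$ into a family of rigid generators of $\bC$.

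Denote the given functor by $F:\bC\to\Ind\bT$. First I would show that $F$ sends rigid objects of $\bC$ into $\bT\subset\Ind\bT$: rigid objects of $\Ind\bT$ are compact (they have duals and $\unit$ is compact in $\Ind\bT$), hence lie in the finitary subcategory $\bT$. In particular, $F(S)\subset\bT$. Combined with exactness of $F$ and the fact that $\bT$ is a Serre subcategory of $\Ind\bT$ (closed under subobjects, quotients, extensions, tensor products, duals), this gives $F(K)\in\bT$ for every kernel $K=\ker(f)$ taken in $\bC$ with $f$ a morphism between objects of $S$.

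The crucial step is to lift the rigidity of $F(K)$ in $\Ind\bT$ back to rigidity of $K$ in $\bC$. Since $\bC$ is biclosed, the internal dual $K^\vee:=[K,\unit]$ and the associated candidate evaluation and coevaluation morphisms exist in $\bC$. Cocontinuity of $F$ together with the biclosed structures forces $F$ to preserve internal Homs (by uniqueness of right adjoints to $F(K)\otimes -$), so these candidates are mapped to the genuine evaluation and coevaluation of $F(K)$ in $\Ind\bT$. Because $F$ is faithful and exact, it is conservative, and conservativity promotes the snake identities from $\Ind\bT$ back to $\bC$. Hence each such $K$ is rigid in $\bC$, providing a family of rigid generators. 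Applying \cite[Proposition~3.3.2]{CEOP} to $\bC$ with this family then yields $\bC\simeq\Ind\bU$ for a tensor category $\bU$ (whose base field is $\End_\bC(\unit)$, an extension of $k$ inside $\End_{\Ind\bT}(\unit)$).

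The main obstacle is the reflection of rigidity: ensuring that rigidity of $F(K)$ descends to rigidity of $K$ along $F$, which uses conservativity in an essential way rather than just faithfulness. Once this is secured, the remainder is bookkeeping consisting of verifying the precise hypotheses of \cite[Proposition~3.3.2]{CEOP} (that the rigid generators one has constructed satisfy the generation condition required there, which is exactly the hypothesis of the lemma after replacing ``kernels between objects in $S$'' with their identification as rigid objects of $\bC$).
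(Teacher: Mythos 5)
Your overall reduction --- upgrade the generating kernels to rigid objects of $\bC$ and then invoke \cite[Proposition~3.3.2]{CEOP} --- is exactly the strategy of the paper's proof, but the execution of your ``crucial step'' has a genuine gap. A strong monoidal cocontinuous functor between biclosed monoidal categories does \emph{not} in general preserve internal Homs: what adjunction yoga gives is that the \emph{right} adjoint $G$ of $F$ satisfies $G[FB,C]\cong [B,GC]$, not that $F[K,\unit]\to [FK,\unit]$ is an isomorphism. The latter comparison map is invertible when $K$ is rigid --- which is precisely what you are trying to prove --- so the argument is circular. More basically, there is no canonical ``candidate coevaluation'' $\unit\to K\otimes [K,\unit]$ in a biclosed monoidal category: producing such a morphism is essentially equivalent to dualizability (one needs the canonical map $[K,\unit]\otimes K\to [K,K]$ to be invertible in order to transport $\id_K$), and you cannot lift the coevaluation of $F(K)$ along $F$, since $F$ is only faithful, not full. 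Faithfulness does let you check the snake \emph{identities} once both structure maps exist in $\bC$, but it does not produce the coevaluation.

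The paper sidesteps internal Homs entirely. It first notes that the hypotheses force the tensor product of $\bC$ to be exact (the tensor product of $\Ind\bT$ is exact, and faithful exactness of the monoidal functor reflects exactness back to $\bC$). Given this, the dual of $N=\ker(f\colon X\to Y)$ with $X,Y\in S$ is constructed \emph{explicitly} as $Q=\coker(f^\ast\colon Y^\ast\to X^\ast)$: exactness of $\otimes$ lets one restrict the coevaluation $\unit\to X\otimes X^\ast$ and evaluation $X^\ast\otimes X\to\unit$ to morphisms $\unit\to N\otimes Q$ and $Q\otimes N\to\unit$, which inherit the snake relations. This replaces $S$ by the class of such kernels and reduces to \cite[Proposition~3.3.2]{CEOP}. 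If you want to salvage your route through $[K,\unit]$, you would first have to identify $[K,\unit]\cong\coker(f^\ast)$, which already requires the exactness argument --- at which point the explicit construction is both shorter and complete.
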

\begin{proof}
The assumptions imply that the tensor product on $\bC$ is exact, from which it follows that the kernel of a morphism between two rigid objects is again rigid. Indeed, for $N$ the kernel of $f: X\to Y$ let $Q$ denote the cokernel of $f^\ast:Y^\ast\to X^\ast$. Using exactness we easily obtain morphisms
$$\unit\to N\otimes Q,\quad Q\otimes N\to \unit$$
from the coevalualtion and evaluation for $(X,X^\ast)$ which inherit the snake relations. We can thus replace $S$ with the class of kernels of morphisms between objects in $S$. Then we can just assume that every object in $\bC$ is simply a quotient of a coproduct of objects in $S$ and the lemma is now in \cite[Proposition~3.3.2]{CEOP}.
\end{proof}

\begin{proof}[Proof of Theorem~\ref{ThmUniTensor}]
Part (1) follows from Remark~\ref{RemDefSigma}(2).

Now take $\gamma\in \Sw(\ba)$. By assumption, there exists a faithful monoidal functor $\theta:\ba\to\bT$ with homological kernel $\gamma$. Set $\cR:=\HT_{\theta}$. By the combination of Lemma~\ref{LemPrexMon} and Proposition~\ref{PropMon1} (applied to $\Noy\ba$ rather than $\ba$), the corresponding biclosed Grothendieck category $\Sh(\Noy\ba,\cR)$ admits a faithful exact monoidal functor to $\Ind\bT$ and therefore is itself the ind-completion of a tensor category by Lemma~\ref{LemCEOP}. We let $\bT_\gamma$ be the latter tensor category. For an arbitrary tensor category $\bT_1$, we claim there exist equivalences
$$\Tens^{\uparrow}(\bT_\gamma,\bT_1)\simeq [\Ind\bT_\gamma,\Ind\bT]_{cc}^\otimes\simeq\ABt^\otimes(\Sh(\Noy\ba,\cR),\Ind\bT_1)\simeq [\Noy\ba,\Ind\bT_1]^{\otimes}_{prex,\cR},$$
which are all obtained from appropriate composition with the obvious functors.
The first equivalence is in the proof of \cite[Theorem~4.4.1]{PreTop}. For the second equivalence, we only need to argue that the functors in the second term are automatically faithful and exact. By the first equivalence we know that they are exact and faithful when restricted to $\bT_\gamma$, and that the extension to the ind-completion is still exact and faithful is then obvious. The third equivalence is Proposition~\ref{PropMon1}. The conclusion then follows from the equivalences
$$[\Noy\ba,\Ind\bT_1]^{\otimes}_{prex,\cR}\simeq[\ba,\Ind\bT_1]^{\otimes}_{\cR}\simeq \RMon_\gamma^{\uparrow}(\ba,\bT_1).$$
The first equivalence follows from Lemma~\ref{LemPrexMon}(2). As $\bT_1$ is equivalent to the category of rigid objects in $\Ind\bT_1$, see \cite[Lemma~1.3.7]{AbEnv}, the second equivalence follows from the equivalence of (a) and (d) in Proposition~\ref{BigProp2}.

Corollary~\ref{CorFlatKernel} shows that there is no inclusion between $\ker\theta_\Delta^{\mZ}$ and $\ker\phi_\Delta^{\mZ}$ for monoidal functors $\theta:\ba\to\bT$ and $\phi:\ba\to\bT'$ to tensor categories $\bT$ and $\bT'$. 
Part (3) then follows from Proposition~\ref{BigProp1}.
\end{proof}

\begin{example}\label{ExPrexHK}
Assume that there exists a prexact faithful monoidal functor $\ba\to\bT$. Then $\Sw(\ba)$ is a singleton, by Theorem~\ref{FlatMon}.
\end{example}
More generally, we have:

\begin{corollary}\label{CorAbEnv}
Assume there exists a (faithful) monoidal functor $\theta:\ba\to\bT$ of canonical homological kernel, $\Sigma^{\ba}\theta=\Sigma^{\ba}$. Then every faithful monoidal functor from $\ba$ to a tensor category is of canonical homological kernel. Moreover, there exists such a functor $\ba\to \bT_0$ which induces
an equivalence
$$\Tens^{\uparrow}(\bT_0,\bT_1)\;\xrightarrow{\sim}\;\RMon^{\uparrow}(\ba,\bT_1)$$
for every tensor category $\bT_1$.
\end{corollary}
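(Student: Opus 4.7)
The plan is to deduce the corollary essentially as a tautology from Theorem~\ref{ThmUniTensor} together with Lemma~\ref{LemSigmaMon}. The first step is to show that under the hypothesis, the set $\Sw(\ba)$ of admissible homological kernels collapses to the singleton $\{\Sigma^{\ba}\}$. Indeed, the hypothesis says that $\Sigma^{\ba}$ itself lies in $\Sw(\ba)$: it is realised as $\Sigma^{\ba}\theta$. For any other $\gamma \in \Sw(\ba)$, witnessed by a faithful monoidal functor $\phi$ to a tensor category, Lemma~\ref{LemSigmaMon}(1) gives $\gamma = \Sigma^{\ba}\phi \subseteq \Sigma^{\ba}$. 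Since Theorem~\ref{ThmUniTensor}(3) forbids any strict inclusion between elements of $\Sw(\ba)$, we must have $\gamma = \Sigma^{\ba}$. This proves the first assertion: every faithful monoidal functor from $\ba$ to a tensor category is of canonical homological kernel.

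The second step is immediate: apply Theorem~\ref{ThmUniTensor}(2) at the unique element $\gamma = \Sigma^{\ba} \in \Sw(\ba)$. This produces a tensor category $\bT_0 := \bT_{\Sigma^{\ba}}$ together with a (necessarily faithful, of canonical homological kernel) monoidal functor $\ba \to \bT_0$ which represents the 2-functor $\RMon^{\uparrow}_{\Sigma^{\ba}}(\ba,-)$. Now by the decomposition in Theorem~\ref{ThmUniTensor}(1) combined with the collapse $\Sw(\ba) = \{\Sigma^{\ba}\}$ from the previous step, we have
\[
\RMon^{\uparrow}(\ba,-) \;=\; \RMon^{\uparrow}_{\Sigma^{\ba}}(\ba,-),
\]
so the representing equivalence becomes $\Tens^{\uparrow}(\bT_0,\bT_1) \xrightarrow{\sim} \RMon^{\uparrow}(\ba,\bT_1)$ as desired.

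There is no real obstacle here, since all the substantive work (existence of the local universal tensor categories $\bT_\gamma$, and pairwise incomparability of homological kernels in $\Sw(\ba)$) has already been established in Theorem~\ref{ThmUniTensor}; the only new ingredient is the observation that $\Sigma^{\ba}$ is the maximum possible homological kernel for faithful monoidal functors, which forces $\Sw(\ba)$ to be a singleton as soon as this maximum is attained.
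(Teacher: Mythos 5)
Your proposal is correct and follows essentially the same route as the paper: the first assertion is obtained from Lemma~\ref{LemSigmaMon}(1) (which bounds every admissible homological kernel above by $\Sigma^{\ba}$) together with the incomparability statement Theorem~\ref{ThmUniTensor}(3), and the second assertion is then an immediate application of Theorem~\ref{ThmUniTensor}(2) at the now unique element of $\Sw(\ba)$. Your write-up merely makes explicit the collapse $\Sw(\ba)=\{\Sigma^{\ba}\}$ and the resulting identification of the coproduct decomposition with a single summand, which the paper leaves implicit.
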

\begin{proof}
By Theorem~\ref{ThmUniTensor}(3) and Lemma~\ref{LemSigmaMon}(1), every faithful monoidal functor from $\ba$ to a tensor category must be of canonical homological kernel.
The result is then an immediate application of Theorem~\ref{ThmUniTensor}(2).
\end{proof}

For completeness we formulate Theorem~\ref{ThmUniTensor} restricted to tensor categories over $k$.
\begin{theorem}\label{ThmUniTensork}
\begin{enumerate}
\item The 2-functor $\RMon(\ba,-):\Tens\to\Cat$ decomposes as 
$$\RMon(\ba,-)\;=\;\coprod_{\gamma\in\Sw_0(\ba)}\RMon_\gamma(\ba,-).$$
\item For each $\gamma\in\Sw_0(\ba)$, the 2-functor $\RMon_\gamma(\ba,-)$ is representable by a tensor category $\bT_\gamma$ over $k$.
\end{enumerate}
\end{theorem}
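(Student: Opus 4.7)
The plan is to deduce this theorem by restricting Theorem~\ref{ThmUniTensor} to tensor categories over $k$.

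For part (1), I would simply observe that if $\theta \in \RMon(\ba, \bT)$ with $\bT \in \Tens$, then by the very definition of $\Sw_0(\ba)$ its homological kernel $\Sigma^{\ba}\theta$ lies in $\Sw_0(\ba)$; conversely every $\gamma \in \Sw_0(\ba)$ is attained this way. Hence the decomposition of $\RMon^{\uparrow}(\ba,-)$ in Theorem~\ref{ThmUniTensor}(1) pulls back along the inclusion $\Tens \hookrightarrow \Tens^{\uparrow}$ to the decomposition asserted in~(1).

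For part (2), fix $\gamma \in \Sw_0(\ba)$ and let $\bT_\gamma$ denote the representing tensor category supplied by Theorem~\ref{ThmUniTensor}(2), a priori a tensor category over some field extension $L/k$. The main point to verify is $L = k$, i.e.\ that $\End_{\bT_\gamma}(\unit) = k$. By definition of $\Sw_0(\ba)$, one may choose a faithful monoidal functor $\theta : \ba \to \bT$ to a tensor category $\bT \in \Tens$ over $k$ with $\Sigma^{\ba}\theta = \gamma$. Applying the universal property from Theorem~\ref{ThmUniTensor}(2), $\theta$ factors essentially uniquely through some $F \in \Tens^{\uparrow}(\bT_\gamma, \bT)$. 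The tensor functor $F$ is faithful, $k$-linear and preserves units, so the induced map $\End_{\bT_\gamma}(\unit) \hookrightarrow \End_{\bT}(\unit) = k$ is an injection of $k$-algebras containing $k$. This forces $\End_{\bT_\gamma}(\unit) = k$, placing $\bT_\gamma$ inside $\Tens$.

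Once $\bT_\gamma$ is known to live in $\Tens$, the universal property in $\Tens$ is immediate from that in $\Tens^{\uparrow}$: for any $\bT_1 \in \Tens$, a $k$-linear tensor functor between two categories over $k$ is the same as an arbitrary tensor functor, giving $\Tens(\bT_\gamma, \bT_1) = \Tens^{\uparrow}(\bT_\gamma, \bT_1)$, and similarly $\RMon_\gamma(\ba, \bT_1) = \RMon^{\uparrow}_\gamma(\ba, \bT_1)$. The only real obstacle, then, is the reduction $L = k$, and this is precisely where the distinction between $\Sw_0(\ba)$ and the larger set $\Sw(\ba)$ is used.
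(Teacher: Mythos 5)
Your proposal is correct and matches the paper's intent: the paper gives no separate proof of Theorem~\ref{ThmUniTensork}, presenting it explicitly as the restriction of Theorem~\ref{ThmUniTensor} to tensor categories over $k$, which is exactly your strategy. Your verification that $\End_{\bT_\gamma}(\unit)=k$ — via the faithful tensor functor $F:\bT_\gamma\to\bT$ supplied by the universal property applied to a realising $\theta:\ba\to\bT$ with $\bT$ over $k$ — correctly supplies the one detail the paper leaves implicit (in the paper this also follows from the construction, since by Theorem~\ref{FlatMon} the base field of the universal category is intermediate between $k$ and that of $\bT$).
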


\begin{remark}
By Remark~\ref{RemSerre}, $\Sw(\ba)$ can also be interpreted as the set of Serre subcategories $I$ of $\Ab(\ba)$ for which $\ba\to\Ab(\ba)/I$ is faithful and $\Ab(\ba)/I$ is a tensor category, see \cite{BHP}, and thus realises the corresponding $\bT_\gamma$.
\end{remark}


\subsection{Recognising a local abelian envelope}
We give the following intrinsic, albeit tedious, characterisation of the universal functors in Theorem~\ref{ThmUniTensor}.
\begin{theorem}
Consider a faithful monoidal functor $\theta:\ba\to\bT$. Then $\theta$ is the universal functor of its homological kernel if and only if the following three conditions are satisfied:
\begin{enumerate}
\item[(G')] For every $X\in \bT$, there exists a morphism $f:A^0\to A^1$ in $\ba$ such that $X$ is a quotient of $\ker\theta(f)$.
\item[(F')] For every two morphisms $g:B^0\to B^1$ and $h:C^0\to C^1$ in $\ba$ and every morphism $\alpha :\ker\theta(g)\to \ker\theta(h)$, there exists morphisms $f: A^0\to A^1$, $u:A^0\to B^0$ and $v:A^0\to C^0$ in $\ba$ such that
\begin{enumerate}
\item $g\circ u$ and $h\circ v$ factor via $f$;
\item $\theta(u)$ restricts to an epimorphism $\ker\theta(f)\to\ker\theta(g)$;
\item the following square is commutative
$$\xymatrix{
\ker\theta(f)\ar@{^{(}->}[r]\ar[d]^{\alpha\circ\theta(u)}& \theta A^0\ar[d]^{\theta(v)}\\
\ker\theta(h)\ar@{^{(}->}[r]& \theta C^0.
}$$
\end{enumerate}
\item[(FF')] For every two morphisms $g:B^0\to B^1$ and $h:C^0\to C^1$ in $\ba$ and every morphism $\nu:B^0\to C^0$ for which $h\circ \nu$ factors via $g$ and $\theta(\nu)$ restricts to zero on $\ker\theta(g)$, there exists morphisms $f: A^0\to A^1$ and $\mu:A^0\to B^0$ such that
\begin{enumerate}
\item both $g\circ\mu$ and $\nu\circ\mu$ factor via $f$;
\item $\theta(\mu)$ restricts to an epimorphism $\ker\theta(f)\to\ker\theta(g)$.
\end{enumerate}
\end{enumerate}
\end{theorem}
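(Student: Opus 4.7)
Set $\gamma:=\Sigma^{\ba}\theta$ and $\cR:=\HT_\theta$, and consider the canonical faithful exact cocontinuous monoidal functor $F:\HSh(\ba,\cR)\to\Ind\bT$ extending $\theta$ via Theorem~\ref{ThmUniHSh}(2). By (the proof of) Theorem~\ref{ThmUniTensor}(2), the restriction of $F$ to the rigid objects is the comparison $\bT_\gamma\to\bT$, so $\theta$ is the universal functor of its homological kernel if and only if $F$ is an equivalence. Since $\tHZ:\ba\to\HSh(\ba,\cR)$ is AB3-tight (Theorem~\ref{ThmUniHSh}(3)) and $F\circ\tHZ\simeq\theta$, Lemma~\ref{LemTight} combined with the automatic faithfulness, exactness and cocontinuity of $F$ shows that $F$ is an equivalence if and only if $\theta$ is AB3-tight in $\Ind\bT$ and $F$ is full. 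The plan is therefore to identify AB3-tightness with (G'), and fullness of $F$ with the conjunction of (F') and (FF').

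\textbf{AB3-tightness versus (G').} The smallest AB3 subcategory of $\Ind\bT$ containing $\theta(\ba)$ is closed under coproducts, kernels and cokernels; in particular it contains every $\ker\theta(f)=\vec\theta(f)=F(\tZ f)$. Since every object of $\Ind\bT$ is a filtered colimit of objects of $\bT$, an elementary induction shows this AB3 subcategory equals $\Ind\bT$ exactly when every object of $\bT$ is a quotient of some $\ker\theta(f)$, which is precisely (G').

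\textbf{Fullness versus (F') and (FF').} By cocontinuity of $F$ and the fact that $\HSh(\ba,\cR)$ is generated under AB3-operations by the image of $\tZ:\Noy\ba\to\HSh(\ba,\cR)$ (see the proof of Theorem~\ref{ThmUniHSh}(3)), fullness of $F$ reduces to surjectivity of $\Sh(\tZ g,\tZ h)\to\bT(\vec\theta g,\vec\theta h)$ for all $g,h\in\Noy\ba$. By the plus-construction description of sheafification, elements of the left-hand side are represented, modulo further refinement, by pairs $(\mu:f\to g,\;\beta:f\to h)$ in $\Noy\ba$ in which $\mu$ generates a covering sieve in $\cR$, equivalently $\vec\theta(\mu):\ker\theta(f)\twoheadrightarrow\ker\theta(g)$ is an epimorphism. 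Unwinding morphisms of $\Noy\ba$ into data in $\ba$, condition (F') provides such a lift for every $\alpha:\vec\theta(g)\to\vec\theta(h)$: the morphism $u$ plays the role of $\mu$, the morphism $v$ encodes $\beta$ (its factorization through $f$ being guaranteed by (F')(a)), and the square in (F')(c) witnesses that $F$ sends the resulting sheaf morphism to $\alpha$. Condition (FF') identifies exactly the pairs representing the zero morphism: a $\Noy\ba$-morphism $g\to h$ with $\vec\theta$ vanishing must become zero after precomposition with some cover $\mu:f\to g$, which is the sheaf-theoretic condition for an element of $\Sh(\tZ g,\tZ h)$ to vanish. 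Combined with the automatic faithfulness of $F$, (FF') thus ensures the lift from (F') is well-defined in $\Sh(\tZ g,\tZ h)$; conversely, fullness of $F$ together with faithfulness forces (F') and (FF').

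\textbf{Main obstacle.} The principal difficulty lies in cleanly matching the sheaf-theoretic description of morphisms in $\HSh(\ba,\cR)$ --- given by the plus-construction, or equivalently by generators and relations in $\Noy\ba$ --- with the concrete diagrammatic conditions (F') and (FF'). Morphisms in $\Noy\ba$ are themselves equivalence classes, and the various ``factors via $f$'' clauses must be read as encoding both the sieve-generated-by-$\mu$ being a covering sieve and various composites representing the zero morphism in $\Noy\ba$. The key technical input that makes this bookkeeping manageable is the presence of kernels in $\Noy\ba$ (Lemma~\ref{LemK+K}), which allows covers, refinements and factorizations to be performed internally to $\Noy\ba$, so that the filtered colimit implicit in the plus-construction collapses into the existence of a single morphism $f\to g$ as recorded in (F')(b) and (FF')(b).
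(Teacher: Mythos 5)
Your overall reduction (universality of $\theta$ $\Leftrightarrow$ the comparison $F:\HSh(\ba,\cR)\to\Ind\bT$ is an equivalence, with $F$ automatically faithful, exact and cocontinuous) is sound and matches the paper's starting point. But the paper then finishes in two lines by invoking the intrinsic characterisation of sheaf-category presentations in \cite[Theorem~1.2]{Lowen} (a generalised Gabriel--Popescu theorem) applied to $\vec\theta:\Noy\ba\to\Ind\bT$, translating Lowen's conditions (G), (F), (FF) into (G'), (F'), (FF') via $\vec\theta(f)=\ker\theta(f)$ and compactness. You instead set out to reprove that Gabriel--Popescu-type statement by hand, and the two places where the real work lives are exactly the places you assert rather than prove.

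First, the claimed biconditional ``$\theta$ is AB3-tight iff (G')'' is not established. The direction (G')$\Rightarrow$tight can be repaired, but not by the argument you give: a quotient of an object of an abelian subcategory $\bB$ need not lie in $\bB$; you must apply (G') a second time to the kernel $N$ of $\ker\theta(f)\tto X$ so as to exhibit $X$ as a \emph{cokernel of a morphism between} objects of $\bB$. The converse direction genuinely fails as a standalone claim: minimality of the AB3-closure would require the class of quotients of coproducts of objects $\ker\theta(f)$ to be an abelian subcategory, and it is not obviously closed under kernels, so no ``elementary induction'' extracts (G') from tightness alone. In the correct proof, the ``only if'' half of (G') is deduced from $F$ being an equivalence (every object of $\Sh(\Noy\ba,\cR)$ is a quotient of a coproduct of objects $\tZ(f)$, then compactness and additivity reduce to a single $f$), not from tightness. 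Second, the equivalence ``$F$ full iff (F')+(FF')'' is the entire content of the Lowen/Gabriel--Popescu argument, and your sketch leaves it as an assertion: the identification of $\Sh(\tZ g,\tZ h)$ with ``pairs $(\mu,\beta)$ modulo refinement'' suppresses the double plus-construction and the matching-family compatibility on a principal but non-monic cover (a matching family on the sieve generated by $\mu$ is more data than a single $\beta:f\to h$ unless one quotients correctly), and the reduction of fullness to generators needs an argument. Moreover, the collapse of infinite covering families to a single morphism $f\to g$ in (F')(b)/(FF')(b) is due to compactness of the objects $\ker\theta(f)$ in $\Ind\bT$ together with additivity of $\Noy\ba$, not, as you state, to $\Noy\ba$ having kernels. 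Either supply these arguments in full or, as the paper does, cite \cite{Lowen} and reduce the theorem to a translation exercise.
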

\begin{proof}
By construction, $\theta:\ba\to\bT$ is one of the universal functors if $\vec{\theta}$ (composed with $\bT\hookrightarrow\Ind\bT$) corresponds to the canonical functor from $\Noy\ba$ to a category of sheaves on $\Noy\ba$. For the latter property, we can use the intrinsic description in \cite[Theorem~1.2]{Lowen}. Using the definition of $\vec{\theta}$ and the fact that it takes values in the category of compact objects in $\Ind\bT$ then yields the description.
\end{proof}
\subsection{Some considerations about the symmetric case}
\subsubsection{}\label{RemBraid}
All statements in Theorem~\ref{ThmUniTensor} and Corollary~\ref{CorAbEnv} remain true if we restrict to braided (or symmetric) categories and functors. For this we can apply \cite[Theorem~3.5.7]{PreTop}.

\begin{corollary}\label{CorSymm} Assume that $\ba$ is symmetric.
\begin{enumerate}
\item Consider field extensions $K_1/k$ and $K_2/k$ and faithful symmetric monoidal functors $\theta_i:\ba\to\Vecc_{K_i}$. Then $\theta_1$ and $\theta_2$ have the same homological kernel if and only if there exists a common field extension $K_1\hookrightarrow K\hookleftarrow K_2$ such that the two monoidal functors $(K\otimes_{K_i}-)\circ \theta_i:\ba\to \Vecc_K$ isomorphic.
\item Assume that $k$ is algebraically closed. Every two non-isomorphic faithful symmetric monoidal functors $\ba\to\Vecc_k$ have different (and hence incomparable) homological kernel. 

\end{enumerate}
\end{corollary}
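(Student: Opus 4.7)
The plan is to apply the symmetric analogue of Theorem~\ref{ThmUniTensor} (valid by~\ref{RemBraid}) to reduce both parts to a classical Tannakian comparison theorem for fiber functors on symmetric tensor categories.

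The forward direction of~(1) is immediate: the base change $K\otimes_{K_i}-:\Vecc_{K_i}\to\Vecc_K$ is a faithful exact symmetric tensor functor, so Remark~\ref{RemDefSigma}(2) gives $\Sigma^{\ba}\theta_i=\Sigma^{\ba}((K\otimes_{K_i}-)\circ\theta_i)$, and an isomorphism of the two base-change composites forces $\Sigma^{\ba}\theta_1=\Sigma^{\ba}\theta_2$.

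For the reverse direction of~(1), I would set $\gamma:=\Sigma^{\ba}\theta_1=\Sigma^{\ba}\theta_2$ and apply the symmetric analogue of Theorem~\ref{ThmUniTensor}(2) to obtain a universal symmetric tensor category $\bT_\gamma$ together with a functor $\eta:\ba\to\bT_\gamma$ such that each $\theta_i$ factors essentially uniquely as $F_i\circ\eta$ for a faithful symmetric tensor functor $F_i:\bT_\gamma\to\Vecc_{K_i}$. The $F_i$ are fiber functors on $\bT_\gamma$, and the conclusion then reduces to the Tannakian statement that any two fiber functors from a symmetric tensor category into $\Vecc_{K_i}$ become isomorphic after base change to a common extension $K\supset K_1,K_2$, cf.~\cite{Del90}. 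For part~(2), I apply~(1) with $K_1=K_2=k$: the scheme $\mathrm{Isom}^\otimes(F_1,F_2)$ of symmetric tensor isomorphisms between the resulting fiber functors $F_i:\bT_\gamma\to\Vecc_k$ is a torsor under $\Aut^\otimes(F_1)$ over~$k$, and the $K$-point supplied by~(1) descends to a $k$-point by cohomological triviality of such torsors over the algebraically closed field~$k$, yielding $\theta_1\simeq\theta_2$.

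The main obstacle is to invoke the Tannakian comparison of fiber functors in adequate generality: classical formulations apply to artinian symmetric tensor categories, whereas the $\bT_\gamma$ produced by Theorem~\ref{ThmUniTensor} need not be artinian. One must therefore either use a gerbe-theoretic version of Deligne's theorem or first reduce the problem to the rigid (and hence more tractable) part of $\bT_\gamma$ generated by the image of $\eta$, on which the classical Tannakian arguments apply.
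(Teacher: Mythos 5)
Your proposal matches the paper's proof: both directions of (1) are handled exactly as in the paper (Remark~\ref{RemDefSigma}(2) for the easy implication; factorisation through $\bT_\gamma$ via the symmetric analogue of Theorem~\ref{ThmUniTensor}(2) plus Deligne's comparison of fibre functors \cite[1.10--1.13]{Del90}, followed by passing to a quotient field of the faithfully flat algebra, for the other), and your torsor argument for (2) is exactly the content of the results of Deligne that the paper cites (\cite[Theorem~6.4.1]{Tann}, \cite[Corollaire~6.20]{Del90}). Your closing worry is unnecessary: $\bT_\gamma$ is already rigid by definition of a tensor category here, and Deligne's comparison results do not require artinianity, only the existence of the fibre functors you have already produced.
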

\begin{proof}
For part (1), assume first that $\theta_1$ and $\theta_2$ have the same homological kernel $\gamma$. It follows from Theorem~\ref{ThmUniTensor}(2) that $\theta_1$ and $\theta_2$ are obtained by composition of $\ba\to\bT_\gamma$ with two tensor functors $\bT_\gamma\to\Vecc_{K_i}$. It follows from \cite[1.10-1.13]{Del90} that there exists a faithfully flat commutative $K_1\otimes_k K_2$-algebra $R$ such that the two resulting monoidal functors $\bT_\gamma\to \mathsf{Mod}_R$ are isomorphic. We take $K$ to  be the quotient of $R$ with respect to a maximal ideal. Again, the two resulting monoidal functors $\bT_\gamma\to \Vecc_K$ are isomorphic and the same follows for the two functors $\ba\to\Vecc_K$.
The other direction in part (1) follows from Remark~\ref{RemDefSigma}(2).

For part (2) assume that two such functors have the same homological kernel $\gamma$. By Theorem~\ref{ThmUniTensor}(2) and~\ref{RemBraid}, this leads to two non-isomorphic symmetric tensor functors $\bT_\gamma\to\Vecc$. The latter is contradicted by work of Deligne, see \cite[Theorem~6.4.1]{Tann} and \cite[Corollaire~6.20]{Del90}.\end{proof}


Recall from \cite[\S 8]{Del90} that to a symmetric monoidal functor $\theta:\ba\to\bT$ we can associate an affine group scheme $\underline{\mathrm{Aut}}^{\otimes}(\theta)$ in $\bT$. This is a representable functor from the category of commutative algebras in $\Ind\bT$ to the category of groups which sends an algebra $R$ to the automorphism group of the monoidal functor from $\ba$ to the category of $R$-modules in $\Ind\bT$ obtained by composing $\theta$ with extension of scalars. As an example of this we have the fundamental group $\pi(\bT)$ of $\bT$. We refer to \cite{Del90} for details.

\begin{theorem}
Assume that $k$ is perfect and that $\ba$ is symmetric and admits a faithful symmetric monoidal functor $\theta:\ba\to\bT$ of homological kernel $\gamma$ to an artinian symmetric tensor category $\bT$ over $k$. Consider the affine group scheme $G:=\underline{\mathrm{Aut}}^{\otimes}(\theta)$ in $\bT$ equipped with the natural transformation $\epsilon:\pi(\bT)\rightarrow G$ sending $\eta:\id_{\bT}\Rightarrow\id_{\bT}$ to $\eta_{\theta}$. Then there is an equivalence of symmetric tensor categories
$$\bT_\gamma\;\simeq\;\Rep_{\bT}(G,\epsilon).$$
\end{theorem}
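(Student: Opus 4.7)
By the symmetric version of Theorem~\ref{ThmUniTensor}(2) (see~\ref{RemBraid}), there is an essentially unique faithful symmetric tensor functor $\theta':\bT_\gamma\to\bT$ with $\theta\simeq\theta'\circ z_\gamma$, where $z_\gamma:\ba\to\bT_\gamma$ is the universal functor. Because faithful exact tensor functors reflect finite length (a proper chain of subobjects of $X\in\bT_\gamma$ would yield a proper chain of subobjects of $\theta'(X)$), the category $\bT_\gamma$ is itself artinian; and since $\End_{\bT_\gamma}(\unit)\hookrightarrow\End_{\bT}(\unit)=k$, it is artinian over $k$. The goal is to reinterpret $\theta'$ as a fibre functor in the sense of~\cite[\S 8]{Del90} and invoke Deligne's Tannakian reconstruction.

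The second step is to identify
\[
\underline{\mathrm{Aut}}^\otimes(\theta)\;\xrightarrow{\sim}\;\underline{\mathrm{Aut}}^\otimes(\theta')
\]
as affine group schemes in $\bT$, compatibly with the maps from $\pi(\bT)$. For a commutative algebra $R$ in $\Ind\bT$, the composite $\theta_R:\ba\to R\text{-}\mathsf{Mod}(\Ind\bT)$ of $\theta$ with extension of scalars is symmetric monoidal, and when $R$ is faithfully flat it is moreover faithful with the same homological kernel $\gamma$ (using Remark~\ref{RemDefSigma}(2)). By the universal property of $\bT_\gamma$ applied internally to $R\text{-}\mathsf{Mod}(\Ind\bT)$ (which is a biclosed symmetric monoidal Grothendieck category), such functors correspond bijectively to exact symmetric tensor extensions of $\theta'$ along $\bT\to R\text{-}\mathsf{Mod}(\Ind\bT)$, and this bijection lifts to monoidal automorphisms. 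Functoriality in $R$ gives the desired isomorphism of affine group schemes, and tracing through the construction shows it intertwines the two natural transformations from $\pi(\bT)$.

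Finally, I would apply Deligne's reconstruction theorem~\cite[Th\'eor\`eme~8.17]{Del90} to the faithful exact symmetric tensor functor $\theta':\bT_\gamma\to\bT$. The hypotheses (perfect $k$, artinian target, faithful tensor functor) yield an equivalence of symmetric tensor categories
\[
\bT_\gamma\;\simeq\;\Rep_\bT\!\bigl(\underline{\mathrm{Aut}}^\otimes(\theta'),\epsilon'\bigr),
\]
and combining with the identification from the previous step gives the stated equivalence $\bT_\gamma\simeq\Rep_\bT(G,\epsilon)$.

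The main obstacle is the second step: the universal property in Theorem~\ref{ThmUniTensor} is formulated for functors to honest tensor categories, whereas to recover the affine group scheme $G$ one must evaluate automorphism functors on arbitrary commutative algebras $R$ in $\Ind\bT$, whose module categories are generally not tensor categories. The plan is to promote the universal property to this enlarged setting by going through $\HSh(\ba,\cR)\simeq\Ind\bT_\gamma$ (cf.\ Theorem~\ref{ThmUniTensor}(2)), where the relevant Grothendieck-categorical universal property of Proposition~\ref{PropMon1} applies directly; the cost is that one must check that faithful flatness of $R$ over $\bT$ is enough to preserve the homological kernel $\gamma$ under base change, which in turn reduces to the exactness criteria of Section~\ref{Crit}.
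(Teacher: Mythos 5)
Your overall architecture coincides with the paper's: factor $\theta\simeq\theta'\circ z_\gamma$ through the universal functor, apply Deligne's \cite[\S 8]{Del90} reconstruction to the tensor functor $\theta':\bT_\gamma\to\bT$, and then identify $\underline{\mathrm{Aut}}^{\otimes}(\theta')$ with $\underline{\mathrm{Aut}}^{\otimes}(\theta)$ compatibly with the maps from $\pi(\bT)$. The gap is in how you execute that identification. First, you only compare the two automorphism functors on \emph{faithfully flat} commutative algebras $R$ in $\Ind\bT$; an isomorphism of affine group schemes requires a natural isomorphism of the functors on \emph{all} commutative algebras $R$, and restricting to faithfully flat ones does not determine the representing object. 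Second, your plan aims at a \emph{bijection} between symmetric monoidal functors $\ba\to R\text{-}\mathsf{Mod}(\Ind\bT)$ of homological kernel $\gamma$ and tensor extensions of $\theta'$, i.e.\ at essential surjectivity of the comparison functor. This is both unobtainable in general and unnecessary. For non-flat $R$ the category $(\Ind\bT)_R$ has non-exact tensor product and the base-change functor $\bT\to(\Ind\bT)_R$ is neither exact nor faithful, so $R\otimes-\circ\theta$ need not be prexact, need not have homological kernel $\gamma$, and the universal property of $\bT_\gamma$ (even in its Grothendieck-categorical form, Proposition~\ref{PropMon1}) does not classify it. Your proposed reduction "to the exactness criteria of Section~\ref{Crit}" will not go through for such $R$.

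The point you are missing is that to compare $\Aut^{\otimes}(R\otimes-\circ\theta')$ with $\Aut^{\otimes}(R\otimes-\circ\theta)$ one only needs the comparison functor between functor categories to be \emph{fully faithful} on the single relevant object, since a fully faithful functor induces isomorphisms of automorphism groups. Concretely, for arbitrary commutative $R$ in $\Ind\bT$ the composite
$$\ABt^\otimes(\Ind\bT_\gamma,(\Ind\bT)_R)\;\to\; [\Noy\ba,(\Ind\bT)_R]^{\otimes}_{prex}\;\to\; [\ba,(\Ind\bT)_R]^{\otimes}$$
is fully faithful: the first functor by Proposition~\ref{PropMon1}, the second by Lemma~\ref{LemPrexMon}(1), which crucially requires no exactness of the tensor product on the target (that hypothesis is only needed for the essential surjectivity statement in Lemma~\ref{LemPrexMon}(2), which you do not need). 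Applying this to the object $R\otimes-\circ\omega$ for $\omega:\Ind\bT_\gamma\to(\Ind\bT)_R$ induced by $\theta'$ gives the isomorphism $\Aut^{\otimes}(R\otimes-\circ\theta')\xrightarrow{\sim}\Aut^{\otimes}(R\otimes-\circ\theta)$ naturally in all $R$, which is what the identification of group schemes actually requires.
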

\begin{proof}
By construction of $\bT_\gamma$, there exists a tensor functor $\omega:\bT_\gamma\to\bT$ such that $\theta\simeq\omega\circ \phi$, for the universal functor $\phi:\ba\to\bT_\gamma$. By \cite[\S8]{Del90}, we have an equivalence between $\bT_\gamma$ and $\Rep_{\bT}(G',\epsilon')$. Here $G'$ is the affine group scheme $\underline{\mathrm{Aut}}^{\otimes}(\omega)$ and $\epsilon':\pi(\bT)\to G'$ the corresponding homomorphism. It thus suffices to show that for every commutative ind-algebra $R$ in $\bT$, composition with $\phi$ yields an isomorphism
$$\Aut^{\otimes}(R\otimes-\circ \omega)\;\to\;\Aut^\otimes(R\otimes-\circ \theta).$$
The latter follows since the functors
$$\ABt^\otimes(\Ind\bT_\gamma,(\Ind\bT)_R)\;\to\; [\Noy\ba,(\Ind\bT)_R]^\otimes_{prex}\;\to\; [\ba,(\Ind\bT)_R]^\otimes$$
are fully faithful by Proposition~\ref{PropMon1} and Lemma~\ref{LemPrexMon}(1).
\end{proof}


\section{Examples}\label{Examples}
For the entire section, let $\ba$ be a {\em symmetric} rigid monoidal category over $k$. Tensor categories are again assumed to be over $k$ or some field extension. We will freely use notation and results from Appendix~\ref{App}.
\subsection{Prexact monoidal functors}
Throughout this subsection we assume that $k$ is {\em algebraically closed}.

\begin{theorem}\label{ThmVecFlat}
Assume that $\charr(k)=0$ and algebraically closed. Every faithful symmetric monoidal functor $\ba\to\Vecc_k$ is prexact.
\end{theorem}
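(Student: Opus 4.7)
The plan is to produce a single faithful prexact symmetric monoidal functor from $\ba$ to a tensor category over $k$, and then invoke Corollary~\ref{CorFlatAll} to conclude that every faithful symmetric monoidal functor out of $\ba$---and in particular the given $\theta$---is prexact.

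Starting with the faithful fibre functor $\theta:\ba\to\Vecc_k$, I would set $G:=\underline{\mathrm{Aut}}^{\otimes}(\theta)$, an affine group scheme over $k$. Classical Tannakian reconstruction yields a factorisation $\ba\xrightarrow{\pi}\Rep(G)^{\mathrm{fd}}\xrightarrow{\omega}\Vecc_k$ of $\theta$, with $\pi$ faithful symmetric monoidal and $\omega$ the forgetful fibre functor (exact and faithful). I would then let $\bT\subseteq\Rep(G)^{\mathrm{fd}}$ be the smallest full tensor subcategory of $\Rep(G)^{\mathrm{fd}}$ containing the image of $\pi$ and closed under subquotients. By Deligne's theorem~\cite{Del90}, $\bT$ is a tensor category over $k$ in which every object is a subquotient of some $\pi(A)$ with $A\in\ba$; corestricting $\pi$ gives a faithful symmetric monoidal functor $\pi:\ba\to\bT$.

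The core step is to verify prexactness of $\pi$ via Corollary~\ref{CorFlat}: given $f:Y\to X$ in $\ba$, I must produce $g:Z\to Y$ in $\ba$ with $f\circ g=0$ and $\ker\pi(f)=\mathrm{im}\,\pi(g)$ inside $\pi(Y)$. Exploiting rigidity, morphisms $Z\to Y$ in $\ba$ inject into the space $\Hom_G(\pi(Z),\pi(Y))$ of $G$-equivariant maps, so the question becomes: is the $G$-subrepresentation $K:=\ker\pi(f)\subseteq\pi(Y)$ jointly generated by the images of $\ba$-morphisms that compose with $f$ to zero? Once this is established, Corollary~\ref{CorFlatAll} completes the proof.

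The main obstacle is precisely this density step. In $\Rep(G)$ the subrepresentation $K$ is certainly hit by some morphisms from objects of $\pi(\ba)$, but we need morphisms that come from $\ba$---that is, $G$-equivariant maps which descend along the faithful, but not necessarily full, functor $\pi$. I expect to resolve this via Tannakian descent, using the characteristic zero and algebraically closed hypotheses essentially: these ensure that the Tannakian subcategory $\bT$ is defined over $k$ (no field extension needed, in contrast to Corollary~\ref{CorSymm}(1)) and that Deligne's reconstruction supplies enough morphisms in $\ba$ via a suitable invariant-theoretic argument applied to the generating objects $\pi(A)$.
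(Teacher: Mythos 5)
Your reduction of the problem is correct as far as it goes: it suffices to produce one faithful prexact symmetric monoidal functor out of $\ba$ (then Corollary~\ref{CorFlatAll}, or just Example~\ref{ExPrex}(2) applied to $\theta=\omega\circ\pi$, finishes), and the whole difficulty is the ``density step'' you isolate at the end --- showing that $\ker\theta(f)$ is covered by images of morphisms $g:Z\to Y$ that actually live in $\ba$, not merely in $\Rep(G)$. But that step is the entire mathematical content of the theorem, and your proposal does not prove it: ``I expect to resolve this via Tannakian descent\dots via a suitable invariant-theoretic argument'' is precisely the assertion that needs an argument. Worse, in the global form you set it up, the required input is a first-fundamental-theorem statement for $G=\underline{\mathrm{Aut}}^{\otimes}(\theta)$, i.e.\ that $\ba\to\Rep(G)$ hits enough $G$-equivariant maps for the \emph{entire} automorphism group scheme of the whole fibre functor. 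No such general FFT is available, and nothing in \cite{Del90} supplies it; faithfulness of $\pi$ gives you no control over $\Hom_G(\pi(Z),\pi(Y))$ versus the image of $\ba(Z,Y)$.

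The paper avoids the global question entirely by localising at one morphism. By Remark~\ref{RemEnd} one may assume $f$ is an endomorphism of $X$; the criterion of Theorem~\ref{ThmFlat}/Corollary~\ref{CorFlat} only involves the single sequence attached to $f$. The universal property of the category $\EN(\Delta)$ on one endomorphism (Lemma~\ref{UniEN}, with $\Delta$ the sequence of categorical traces of $f^i$) gives $\psi:\EN(\Delta)\to\ba$ with $\psi(\varepsilon)=f$. For the composite to $\Vecc_k$, the relevant automorphism group is just the centraliser $G_A$ of the single matrix $A=\theta(f)$, and here the FFT \emph{is} known: by Kraft--Procesi \cite{KP}, $\EN(\Delta)\to\Rep G_A$ is full, so $\EN(\Delta)/\ker\psi\to\Vecc_k$ is prexact by Lemma~\ref{LemFullFlat}. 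The exact sequence this produces for $[\varepsilon]$ is indexed by morphisms $h:B\to\bullet$ in $\EN(\Delta)$, and these push forward along $\psi$ to genuine morphisms $\psi(h)$ in $\ba$ with $f\circ\psi(h)=0$ --- this is the mechanism your approach lacks for manufacturing morphisms in $\ba$ itself. The resulting subfamily already generates $\ker\theta(f)$, so the full sum in \ref{ThmFlat}(b) is exact. If you want to salvage your route, you would need to replace the appeal to ``Tannakian descent'' by exactly this kind of concrete invariant-theoretic fullness result, and the only one available is for the centraliser of a single endomorphism, which forces the one-morphism-at-a-time reduction.
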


Before preparing the proof of the theorem, we note the following consequence, which we obtain by combining the theorem with Corollary~\ref{CorFlatAll}.
\begin{corollary}\label{Corzero}
If $\charr(k)=0$ and there exists a faithful symmetric monoidal functor $\ba\to\Vecc_k$, then $\ba$ admits a weak abelian envelope.
\end{corollary}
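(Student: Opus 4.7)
The plan is to simply chain the two immediately preceding results. Fix a faithful symmetric monoidal functor $\theta:\ba\to\Vecc_k$. Theorem~\ref{ThmVecFlat} asserts that, under the current standing hypotheses ($\charr(k)=0$ and $k$ algebraically closed), every such $\theta$ is prexact. Thus $\theta$ is a faithful prexact monoidal functor from $\ba$ to a tensor category over $k$, which is precisely the hypothesis of Corollary~\ref{CorFlatAll}. The conclusion of that corollary then yields the weak abelian envelope of $\ba$.

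To unpack the second step for clarity, Corollary~\ref{CorFlatAll} rests on Theorem~\ref{FlatMon}: applied to our $\theta$ the latter produces a tensor category $\bU$ (over an intermediate field extension, which is trivial here since $\Vecc_k$ is already over $k$) with $\Ind\bU\simeq\Sh(\ba,\cT_\theta)$ together with a monoidal functor $\theta':\ba\to\bU$ inducing
$$\Tens(\bU,\bT_1)\;\xrightarrow{\sim}\;\RMon(\ba/{\ker\theta},\bT_1)$$
for every tensor category $\bT_1$ over $k$. Since $\theta$ is faithful, $\ker\theta$ is trivial, so $\ba/{\ker\theta}=\ba$ and the equivalence above is exactly the defining property of a weak abelian envelope.

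The real work is of course carried out in Theorem~\ref{ThmVecFlat}; the corollary itself is an essentially tautological splicing of the two results, and the only thing to verify is that the hypotheses match (which they do). The main obstacle in the overall argument, therefore, is establishing Theorem~\ref{ThmVecFlat}: that any faithful symmetric fibre functor $\ba\to\Vecc_k$ satisfies the prexactness criterion of Corollary~\ref{CorFlat}, i.e.\ that for every morphism $f:Y\to X$ in $\ba$ one can find a morphism $g:Z\to Y$ with $f\circ g=0$ for which $\theta(g)$ is a kernel of $\theta(f)$. I expect this to be extracted from Tannakian considerations using the fibre functor to identify $\ba$ with a full subcategory of rigid representations of $\underline{\mathrm{Aut}}^{\otimes}(\theta)$, exploiting that $\charr(k)=0$ and $k$ algebraically closed to apply Deligne's reconstruction results. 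But that task lies beyond the proof of this corollary, whose content is purely the combination above.
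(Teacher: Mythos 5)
Your proof is correct and is exactly the paper's argument: the paper derives Corollary~\ref{Corzero} precisely by combining Theorem~\ref{ThmVecFlat} with Corollary~\ref{CorFlatAll}. Your unpacking of the second step via Theorem~\ref{FlatMon} (with $\ker\theta=0$ by faithfulness and the field extension trivial) matches how the paper itself justifies Corollary~\ref{CorFlatAll}, and you correctly defer the real work to Theorem~\ref{ThmVecFlat}.
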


\begin{lemma}\label{LemFullFlat}
Consider an affine group scheme $G/k$. Every fully faithful symmetric monoidal functor $\ba\to\Rep_k G$ is prexact.
\end{lemma}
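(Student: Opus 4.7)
\smallskip
\noindent\emph{Proof proposal.}
The plan is to invoke Example~\ref{TrivEx}, which yields prexactness of any fully faithful monoidal functor $\ba\to\bT$ as soon as every object of $\bT$ arises as a quotient of an object in the image. My task thus reduces to establishing this quotient property for $\phi$, possibly after first replacing $G$ by a quotient.

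First, I would perform a Tannakian reduction. Let $\bb\subseteq\Rep_kG$ denote the smallest full abelian subcategory closed under tensor products and containing $\phi(\ba)$. Since $\ba$ is rigid and $\phi$ is symmetric monoidal, $\phi(\ba)$, and hence $\bb$, is closed under duals. As $k$ is algebraically closed, Tannakian reconstruction identifies $\bb\simeq\Rep_kH$ for some affine group scheme quotient $G\twoheadrightarrow H$. The inclusion $\bb\hookrightarrow\Rep_kG$ is exact and faithful, so by Example~\ref{ExPrex}(2) the prexactness of $\phi$ is equivalent to that of its corestriction $\ba\to\bb$. Hence I may replace $G$ by $H$ and assume $\phi(\ba)$ tensor-generates $\Rep_kG$.

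Second, I would exhibit every finite-dimensional representation $W$ of $G$ as a subobject of an object in $\phi(\ba)$ by a Chevalley-type embedding. Any such $W$ factors through some finite-type quotient $G\twoheadrightarrow G'$; under the tensor-generation assumption, the objects of $\phi(\ba)$ factoring through $G'$ still tensor-generate $\Rep_kG'$ and in particular include a faithful representation $U$ of $G'$. The classical matrix-coefficient embedding then realises $W$ as a subobject of a finite direct sum of tensor products of $U$ and $U^*$, all of which lie in $\phi(\ba)$ by its closure under tensor products, duals and finite direct sums.

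Finally, I would dualise. If $W\hookrightarrow V\in\phi(\ba)$ then $W^*$ is a quotient of $V^*\in\phi(\ba)$. As every object of $\Rep_kG$ has the form $W^*$ (take $W$ to be its own dual), every object of $\Rep_kG$ is a quotient of an object in $\phi(\ba)$, and Example~\ref{TrivEx} delivers the prexactness of $\phi$.

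The main obstacle is the second step in the stated generality: when $G$ is a pro-algebraic affine group scheme without a globally faithful representation, one must descend to individual finite-type quotients $G'$ and verify that the tensor-generation property percolates through the pull-back $\Rep_kG'\hookrightarrow\Rep_kG$. Delicate bookkeeping is needed to ensure that a faithful representation of each such $G'$ is always available inside $\phi(\ba)$, rendering the classical embedding applicable.
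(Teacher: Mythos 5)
Your overall strategy (reduce to \mbox{Example~\ref{TrivEx}} by establishing the quotient property) is the right one, and your first reduction --- corestricting to the Tannakian subcategory generated by the image and using Example~\ref{ExPrex}(2) --- is unobjectionable. The gap is in your second step. The classical matrix-coefficient argument does \emph{not} realise $W$ as a subobject of a finite direct sum of objects $U^{\otimes m}\otimes(U^\ast)^{\otimes n}$: it embeds $W$ into $\mathcal{O}(G')^{\oplus\dim W}$, and $\mathcal{O}(G')$ is only a \emph{quotient} of (a localisation of) $\mathrm{Sym}(U\otimes U^\ast\oplus U^\ast\otimes U)$, so what one gets is that $W$ is a \emph{subquotient} of direct sums of tensor products of $U$ and $U^\ast$ (this is the statement in Deligne--Milne). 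Since $\phi(\ba)$ is closed under neither subobjects nor quotients, ``subquotient of an object in $\phi(\ba)$'' does not dualise to ``quotient of an object in $\phi(\ba)$'', and your final step collapses. Outside characteristic zero this is not a repairable bookkeeping issue: the passage from subquotient-generation to the quotient property can genuinely fail for the corestricted functor $\ba\to\bb$ (this failure is the raison d'\^etre of the ``quotient property'' studied in \cite{CEOP}), so no argument working only with the tensor subcategory $\bb\subset\Rep_kG$ generated by the image can succeed in general.

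The paper's proof sidesteps exactly this point by citing \cite[Theorem~4.2.1(1),(2)]{CEOP}, which produces a possibly \emph{different} affine group scheme $H$ together with a new fully faithful monoidal functor $\ba\to\Rep H$ that does enjoy the quotient property ($H$ need not be a quotient of $G$, and the functor need not be a corestriction of $\phi$). Example~\ref{TrivEx} then makes that auxiliary functor prexact, and Corollary~\ref{CorFlatAll} transfers prexactness to the original functor $\ba\to\Rep_kG$. This last transfer step is essential and is missing from your argument; you would need it even if you only managed to verify the quotient property for some replacement target rather than for $\bb$ itself.
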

\begin{proof}
By \cite[Theorem~4.2.1(1) and (2)]{CEOP}, there exists an affine group scheme $H/k$ and a fully faithful monoidal functor $\ba\to\Rep H$ such that every object in $\Rep H$ is a quotient of one in the image. That $\ba\to\Rep H$ is prexact thus follows from Example~\ref{TrivEx}. That the original functor $\ba\to\Rep G$ is also prexact follows from Corollary~\ref{CorFlatAll}.
\end{proof}

\begin{lemma}\label{UniFlat}
Assume that $\charr(k)=0$ and $n\in\mN$. For any $\Delta\in k^{\mN}$ and any symmetric monoidal functor $\phi:\EN(\Delta)\to\Vecc_k$, the faithful functor $\EN(\Delta)/{\ker\phi}\to\Vecc_k$ is prexact.
\end{lemma}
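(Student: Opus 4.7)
The strategy is to factor the induced faithful functor $\bar\phi : \bb \to \Vecc_k$, where $\bb := \EN(\Delta)/\ker\phi$, through a Tannakian category and then invoke Corollary~\ref{CorFlatAll}. Associate to $\bar\phi$ the affine group scheme $G := \underline{\mathrm{Aut}}^\otimes(\bar\phi)$ over $k$. By standard Tannakian formalism \cite{Del90}, $\bar\phi$ admits a canonical symmetric monoidal factorisation
$$\bb \xrightarrow{\ \psi\ } \Rep_k G \xrightarrow{\ \omega\ } \Vecc_k,$$
where $\omega$ is the tautological faithful exact fibre functor. Since $\omega$ is faithful exact and $\bar\phi$ is faithful, $\psi$ is faithful as well.

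The decisive step is to show that $\psi$ is \emph{fully} faithful. Once this is established, Lemma~\ref{LemFullFlat} yields that $\psi$ is prexact, so $\bb$ admits a faithful prexact symmetric monoidal functor to the tensor category $\Rep_k G$ over $k$. Corollary~\ref{CorFlatAll} then immediately forces every faithful symmetric monoidal functor out of $\bb$ into a tensor category over $k$ to be prexact; in particular $\bar\phi$ is prexact, as required.

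The main obstacle is the full faithfulness of $\psi$, since the usual Tannakian reconstruction is formulated for abelian rigid source categories, whereas $\bb$ is rigid but need not be abelian. I would establish full faithfulness by combining the explicit universal description of $\EN(\Delta)$ furnished in Appendix~\ref{App} --- which pins down morphism spaces in $\bb$ in terms of symmetric monoidal data coinciding with $G$-equivariance --- with the characteristic zero hypothesis, which via Deligne's moderate growth and semisimplicity results (see \cite{Del90}) guarantees that $\Rep_k G$ is genuinely Tannakian and that no Hom spaces are lost. As an alternative route to fullness, one can invoke \cite[Theorem~4.2.1]{CEOP} to produce a fully faithful symmetric monoidal embedding $\bb \hookrightarrow \Rep_k H$ for some affine $H$ and then compare it with $\psi$ through the universal properties of $G$ and $H$, both of which parametrise the same fibre functor.
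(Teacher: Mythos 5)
Your overall strategy is the same as the paper's: factor the faithful functor $\bb:=\EN(\Delta)/\ker\phi\to\Vecc_k$ through $\Rep_k G$ for the automorphism group scheme $G$ of the fibre functor (in the paper, $G$ is realised concretely as the centraliser $G_A$ of the matrix $A=\phi(\varepsilon)$ in $GL(V)$, which is the same group scheme), prove that $\bb\to\Rep_k G$ is fully faithful, and then apply Lemma~\ref{LemFullFlat} followed by Example~\ref{ExPrex}(2)/Corollary~\ref{CorFlatAll}. That reduction is correct.

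The gap is at the step you yourself flag as decisive: fullness of $\psi:\bb\to\Rep_k G$. For a rigid symmetric monoidal category $\bb$ with a faithful but non-exact, non-abelian source, the canonical functor to $\Rep_k\underline{\mathrm{Aut}}^\otimes(\bar\phi)$ being full is a double-commutant statement --- it says exactly that the image of $\bb(w,w')$ exhausts $\Hom_{G}(\bar\phi(w),\bar\phi(w'))$, i.e.\ a first fundamental theorem of invariant theory for $G$. Neither of your proposed justifications delivers this. Deligne's semisimplicity and moderate-growth results are irrelevant here: $G_A$ is in general non-reductive (take $A$ nilpotent), $\Rep_k G_A$ is not semisimple, and in any case those results concern the structure of the target, not surjectivity of $\psi$ on Hom spaces. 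The alternative route via \cite[Theorem~4.2.1]{CEOP} is circular: that theorem \emph{assumes} a fully faithful monoidal embedding of $\bb$ into some $\Rep_k H$ and improves it; it cannot manufacture one. The paper closes precisely this gap by citing Kraft--Procesi \cite[Theorem~6.1]{KP}, which supplies the needed fullness of $\EN(\Delta)/\ker\phi\to\Rep G_A$ (invariant theory for the centraliser of a matrix); this is a genuine, non-formal input, and without it or an equivalent your argument does not go through.
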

\begin{proof}
By Lemma~\ref{UniEN}, choosing the functor $\phi$ corresponds to choosing an appropriate endomorphism $A$ of a vector space $V$. Denote by $G{\hspace{-0.7mm}}_A$ the centraliser of $A$ in $GL(V)$. Clearly~$\phi$ lifts through the forgetful functor $\Rep G{\hspace{-0.7mm}}_A \to\Vecc$. It follows from \cite[Theorem~6.1]{KP} that the latter functor is full. Hence, by Lemma~\ref{LemFullFlat}, the functor $\EN(\Delta)/{\ker\phi}\to\Rep G{\hspace{-0.7mm}}_A$ is prexact, which implies that $\EN(\Delta)/{\ker\phi}\to\Vecc$ is prexact.
\end{proof}

\begin{proof}[Proof of Theorem~\ref{ThmVecFlat}]
Consider a functor $\theta:\ba\to\Vecc$ as in the theorem. For a given $f:Y=X\to X$ in $\ba$ we need to prove exactness of the sequence~\ref{ThmFlat}(b), by Remark~\ref{RemEnd}. We can consider the monoidal functor $\psi:\EN(\Delta)\to\ba$, corresponding to this endomorphism $f\in\ba(X,X)$ under the equivalence in Lemma~\ref{UniEN}. Set $n=\delta_0=\dim X$. By Lemma~\ref{UniFlat}, the composite
$$\EN(\Delta)/\ker{\psi}\;\to\; \ba\;\xrightarrow{\theta}\;\Vecc$$
is prexact. We denote the equivalence class of a morphism $m$ in $\EN(\Delta)$ in the quotient by $[m]$. By Theorem~\ref{ThmFlat} we find
$$\bigoplus_{h:B\to \bullet, \;[\varepsilon]\circ [h]=0}\theta(\psi(B))\;\to\; \theta(X)\;\to \;\theta(X)$$
is exact. In particular, we can rewrite the exact sequence as
$$\bigoplus_{h:B\to \bullet, \; f\circ \psi(h)=0}\theta(\psi(B))\;\to\; \theta(X)\;\to \;\theta(X).$$
This shows that also the sequence~\ref{ThmFlat}(b) must be exact, which concludes the proof.
\end{proof}

\begin{remark}
The above results are also obtained in much greater generality by O'Sullivan in \cite[Section~10]{OS}. We reformulate his results in the language of \cite{CEOP}. Let $k$ be a field of characteristic zero and $\ba$ such that every object in $\ba$ is annihilated by some Schur functor and $\cU(\ba)=\cU^{ep}(\ba)$, then $\Sh(\ba,\cT_{\cU^{ep}})$ is the ind-completion of a (super tannakian) tensor category. Hence, by the above and \cite[Corollary~2.3.3(1)]{CEOP}, Theorem~\ref{ThmVecFlat} (and Corollary~\ref{Corzero}) remains true if we replace $\Vecc_k$ by $\mathsf{sVec}_K$ for some field extension $K/k$
\end{remark}

\begin{prop}\label{PropOB} 
For every $n\in\mN$ and every symmetric monoidal functor $\theta:\OB_k(n)\to\Vecc_k$, the faithful functor $\OB_k(n)/{\ker\theta}\to\Vecc_k$ is prexact.
\end{prop}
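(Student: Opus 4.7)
The plan is to imitate the strategy used to prove Lemma~\ref{UniFlat}, replacing the role played there by the endomorphism category $\EN(\Delta)$ and its centraliser group $G\hspace{-0.7mm}_A$ with $\OB_k(n)$ and the orthogonal group $O_n/k$. By the universal property of $\OB_k(n)$ (recorded in Appendix~\ref{App}), giving a symmetric monoidal functor $\theta:\OB_k(n)\to\Vecc_k$ is equivalent to specifying the generating object's image $V:=\theta(\bullet)$, which is an $n$-dimensional vector space, together with a non-degenerate symmetric bilinear form arising from the images of the cup/cap morphisms. The group scheme preserving this datum is the orthogonal group $O_n$, and consequently $\theta$ lifts canonically along the forgetful functor $\omega:\Rep_k O_n\to\Vecc_k$ to a symmetric monoidal functor $\tilde\theta:\OB_k(n)\to\Rep_k O_n$ with $\omega\circ\tilde\theta\simeq\theta$.

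Since $\omega$ is exact and faithful, we have $\ker\theta=\ker\tilde\theta$, so it suffices by Example~\ref{ExPrex}(2) to prove that the induced faithful functor
\[
\tilde\theta\,:\;\OB_k(n)/\ker\theta\;\to\;\Rep_k O_n
\]
is prexact. By Lemma~\ref{LemFullFlat} it is enough to show that this functor is fully faithful. Unravelling the definitions, full faithfulness amounts to surjectivity, for every pair of objects $X,Y\in\OB_k(n)$, of the canonical map
\[
\OB_k(n)(X,Y)\;\to\;\Hom_{\Rep_k O_n}(\tilde\theta X,\tilde\theta Y)\;=\;\Hom_k(\theta X,\theta Y)^{O_n}.
\]

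The final ingredient is thus the first fundamental theorem of invariant theory for the orthogonal group: in characteristic zero every $O_n$-invariant in the mixed tensor power $V^{\otimes r}\otimes (V^\ast)^{\otimes s}$ is a linear combination of diagrams obtained from the bilinear form and its inverse, which is precisely the image of Brauer/oriented-Brauer diagrams under $\theta$. Combined with the description of morphisms in $\OB_k(n)$ in the Appendix, this gives the required surjectivity and hence full faithfulness of $\tilde\theta$ modulo kernel. Applying Lemma~\ref{LemFullFlat} then yields prexactness of $\OB_k(n)/\ker\theta\to\Rep_k O_n$, and postcomposing with $\omega$ gives prexactness of $\OB_k(n)/\ker\theta\to\Vecc_k$.

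The main obstacle I expect is reconciling the presentation of $\OB_k(n)$ from Appendix~\ref{App} with the classical FFT statement — in particular accounting for the additive/idempotent closure built into $\OB_k(n)$, and checking that every hom space one needs to hit is indeed of the form $\Hom_k(V^{\otimes r},V^{\otimes s})^{O_n}$ (rather than, say, of $SO_n$-invariants, which would occur if the universal object in $\OB_k(n)$ had been oriented and would require extra argument in even $n$).
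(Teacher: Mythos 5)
Your overall strategy is the same as the paper's: lift $\theta$ through a forgetful functor $\Rep G\to\Vecc_k$, prove that $\OB_k(n)/\ker\theta\to\Rep G$ is full, and conclude via Lemma~\ref{LemFullFlat} together with Example~\ref{ExPrex}(2). The paper does this with $G=GL(n)$, invoking the first fundamental theorem of invariant theory for $GL(n)$ acting on mixed tensor spaces (\cite[Theorem~1.2]{KSX}).

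However, your identification of the group as the orthogonal group $O_n$ is incorrect, and this breaks the fullness step. The category $\OB_k(n)$ is the \emph{oriented} Brauer category: it is generated by one object $\bullet$ together with its dual $\circ$, and the cup/cap morphisms pair $\bullet$ only with $\circ$ on the same line, never $\bullet$ with $\bullet$. A symmetric monoidal functor $\theta:\OB_k(n)\to\Vecc_k$ therefore determines only an $n$-dimensional space $V$ with its canonical pairing $V^\ast\otimes V\to k$ --- there is no symmetric bilinear form on $V$, and the automorphism group of the resulting fibre functor is $GL(V)$, not $O_n$. If you nevertheless restrict along $O_n\subset GL(V)$ after choosing a form, the functor $\OB_k(n)/\ker\theta\to\Rep_k O_n$ is \emph{not} full: for instance $\OB_k(n)(\unit,\bullet\otimes\bullet)=0$ because $\bullet$ cannot be paired with $\bullet$, whereas $\Hom_{\Rep_k O_n}(\unit,V\otimes V)\cong k$ is spanned by the copairing of the chosen form. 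So Lemma~\ref{LemFullFlat} cannot be applied to your lift. Replacing $O_n$ by $GL(n)$, and the orthogonal FFT by the first fundamental theorem for $GL(n)$ on mixed tensors $V^{\otimes r}\otimes(V^\ast)^{\otimes s}$ (which says precisely that the $GL(V)$-equivariant maps are spanned by the images of oriented Brauer diagrams), repairs the argument and recovers the paper's proof.
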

\begin{proof}
It follows from the first fundamental theorem of invariant theory, see for instance \cite[Theorem~1.2]{KSX}, that the corresponding functor $\OB(n)\to\Rep GL(n)$ is full. That $\OB(n)/{\ker\theta}\to\Rep GL(n)$, and therefore also $\OB(n)/{\ker\theta}\to\Vecc$, is prexact thus follows from Lemma~\ref{LemFullFlat}.
\end{proof}
\begin{remark}
Alternatively, it follows also easily from \cite{PreTop, CEH, CEOP} that $\Rep GL(n)$ is the abelian envelope of the quotient in Proposition~\ref{PropOB}.
\end{remark}

\subsection{The universal case in prime characteristic}
We assume that char$(k)=p>0$ and show that $\OB_k(\delta)$ admits monoidal functors of many different homological kernels.

Recall from \cite{Tann} that for a symmetric tensor category $\bT$ over $k$ and $X\in\bT$, we define $\Fr_+X$ as the image of the composite
$$\Gamma^pX=H^0(S_p,\otimes^p X)\hookrightarrow \otimes^p X\tto H_0(S_p,\otimes^p X)=\Sym^p X.$$

We will also use the $p$-adic dimension $\Dim_+$ from \cite{EHO}. For $X\in\bT$, the $p$-adict integer $\Dim_+ X\in\mZ_{p}$ contains the information of all  categorical dimensions $\dim\Sym^jX$, $j\in\mN$, and $\dim X$ is the image of $\Dim_+X$ under $\mZ_p\tto\mF_p$.


\begin{theorem}\label{Thmp}
Set $\ba:=\OB_k(\delta)$ for some $\delta\in\mF_p\subset k$. Assume $p>2$.
\begin{enumerate}
\item There is a surjective function
$$\Sw(\ba)\;\tto\;\mZ_p\times\{0,1\},$$ 
which sends $\gamma$ to the following pair. With $F_\gamma:\ba\to\bT_\gamma$ the universal functor of homological kernel $\gamma$, the element of $\mZ_p$ is the image of $\Dim_+F_\gamma(V_\delta)$ under $\mZ_p\tto\mZ_p$, $\sum_{i\ge 0}a_ip^i\mapsto \sum_{i>0}a_ip^{i-1}$. For the second factor, we take $0$ when $\Fr_+(F_\gamma(V_\delta))=0$ and $1$ otherwise.
\item If $k$ is finite or algebraically closed, the above restricts to a surjective function 
$$\Sw_0(\ba)\tto\mZ_p\times \{0,1\}.$$
\end{enumerate}
For $p=2$ the statements remain true if we just consider $\Sw(\ba)\to\mZ_2$.
\end{theorem}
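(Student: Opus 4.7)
The plan is to separate the argument into well-definedness of the prescribed map and its surjectivity. For well-definedness, given $\gamma\in\Sw(\ba)$, Theorem~\ref{ThmUniTensor}(2) supplies a universal symmetric tensor category $\bT_\gamma$ with functor $F_\gamma:\ba\to\bT_\gamma$, whose image $X_\gamma:=F_\gamma(V_\delta)$ is a dualizable object of categorical dimension $\delta$. Both $\Dim_+X_\gamma\in\delta+p\mZ_p$ and the vanishing of $\Fr_+X_\gamma$ are intrinsic symmetric-tensor-categorical invariants of $(\bT_\gamma,X_\gamma)$; moreover, since any faithful representative $\theta\in\RMon_\gamma^{\uparrow}(\ba,\bT)$ of $\gamma$ factors as $\omega\circ F_\gamma$ for some symmetric tensor functor $\omega$, and such $\omega$ preserves both $\Dim_+$ and $\Fr_+$, the pair extracted from $\theta$ coincides with that extracted from $F_\gamma$. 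Hence the map is well-defined.

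For surjectivity, via the universal property of $\OB_k(\delta)$ recalled in Appendix~\ref{App}, it suffices to construct for each prescribed pair $(d,\epsilon)$ a symmetric tensor category $\bT$ (over $k$ for part~(2)) with a dualizable object $X$ satisfying $\dim X=\delta$, $\Dim_+X=\delta+pd$, and $\Fr_+X=0$ iff $\epsilon=0$, such that the induced symmetric monoidal functor $\OB_k(\delta)\to\bT$ is faithful. For $\epsilon=0$ I would use the Verlinde-type variants of Deligne's universal categories from Appendix~\ref{App}: these produce, for arbitrary $d\in\mZ_p$, a symmetric tensor category generated by a dualizable object of prescribed $p$-adic dimension $\delta+pd$ and vanishing Frobenius twist, and the construction is universal so that the resulting functor out of $\OB_k(\delta)$ is faithful by design. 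For $\epsilon=1$ I would use the classical Tannakian model $\Rep_k GL_n$ with positive integers $n\equiv\delta\pmod p$, where the standard representation $V$ satisfies $\Dim_+V=n$ and $\Fr_+V\neq 0$; arbitrary $d\in\mZ_p$ is then reached via a $p$-adic (ultra)limit of a sequence $n_i\to\delta+pd$, or equivalently via a mixed Tannakian--Verlinde variant from Appendix~\ref{App}.

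The main obstacle is the joint realization problem: simultaneously controlling $\Dim_+$, $\Fr_+$, and faithfulness, and, for part~(2), keeping $\bT$ a tensor category over $k$ itself. The functors $\OB_k(\delta)\to\Rep_k GL_{n_i}$ are not individually faithful, but they become faithful on each fixed-rank morphism space once $n_i$ exceeds the Schur--Weyl stable range; a $p$-adic (ultra)limit construction then assembles them into a single globally faithful functor. For part~(2), descent to $k$ succeeds under the hypothesis on $k$ because both finite and algebraically closed fields admit $k$-rational fiber functors on the relevant (super-)Tannakian pieces, and the Verlinde variants are already defined over the prime field. Finally, the restriction $p>2$ is structural: in characteristic two the composite $\Gamma^2X\to\Sym^2 X$ defining $\Fr_+$ degenerates because $|S_2|=p$, so this second invariant no longer varies independently of the first and the $\{0,1\}$ factor collapses, leaving only the $\mZ_2$-valued component.
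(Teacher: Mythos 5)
Your skeleton is right and matches the paper: well-definedness follows because $\Dim_+$ and $\Fr_+$ are preserved by symmetric tensor functors and every $\theta$ of homological kernel $\gamma$ factors through $F_\gamma$ (Theorem~\ref{ThmUniTensor}), so surjectivity reduces to realising each pair $(d,\epsilon)$ by a faithful symmetric monoidal functor out of $\OB_k(\delta)$. Your $\epsilon=1$ construction is essentially the paper's: an ultraproduct $\bV(\cU)=\prod_{\cU}\Vecc_k$ containing $W=(k^{n})_n$ with $\Dim_+W=\lim_{\cU}n$ equal to an arbitrary prescribed lift of $\delta$, and with $\Fr_+W\simeq W\neq 0$.

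The $\epsilon=0$ case, however, has a genuine gap. You invoke ``Verlinde-type variants of Deligne's universal categories from Appendix~\ref{App}'', but Appendix~\ref{App} contains no such categories (only $\OB$, $\MO$, $\EN$, $\Seq$), and no single artinian tensor category could realise uncountably many $p$-adic dimensions in any case. The mechanism the paper uses to kill $\Fr_+$ is the ultraproduct of \emph{supervector space} categories, $\bV'(\cU)=\prod_{\cU}\mathsf{sVec}_k$, with the purely odd object $W'=(k^{0|n})_n$: then $\Fr_+W'=0$ and $\Dim_+W'=-\lim_{\cU}n$ ranges over all lifts of $\delta$ as $\cU$ varies. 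Relatedly, ``faithful by design'' is false: universality of $\OB_k(\delta)$ produces a functor to any target containing an object of dimension $\delta$, but such functors are typically not faithful (e.g.\ $\OB_k(n)\to\Vecc_k$ for $n\in\mN$). Faithfulness is precisely the nontrivial point, and in both cases it comes from the injectivity of $kS_n\to\End_k(\otimes^nV)$ for $V=k^m$ or $V=k^{0|m}$ with $m\ge n$, which holds eventually along the ultrafilter; your ``stable range'' remark covers this only for $\epsilon=1$. This also corrects your reading of the hypothesis $p>2$: the restriction is there because $\mathsf{sVec}_k$ is not available in characteristic $2$, so the $\epsilon=0$ branch disappears, not because $\Gamma^2X\to\Sym^2X$ degenerates. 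Finally, for part (2) the relevant point is simply that $K=k^{\cU}$ equals $k$ when $k$ is finite and is isomorphic to $k$ when $k$ is algebraically closed; no argument about $k$-rational fibre functors is needed.
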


\begin{remark}
For $\delta\in k\backslash \mF_p$ we have $\Sw(\OB(\delta))=\varnothing$, by \cite[Lemma~2.2]{EHO}.
\end{remark}

The remainder of this section is devoted to the proof of the theorem. 
\subsubsection{}\label{SecU} We refer to \cite{Harman} for details on all of the constructions in this paragraph. Let $\cU$ be a non-principal ultrafilter on $\mN$. We have the ultrapower $K:=k^{\cU}$ which is a field extension of~$k$. If $k$ is a finite field, then $ k=K$ and if $k$ is algebraically closed then $K\simeq k$ non-canonically by Steinitz' theorem. 

Since the set of $p$-adic integers $\mZ_p$ is compact for the $p$-adic topology, we can define the ultralimit $\lim_{\cU}a_n$ of any sequence $\{a_n\in\mZ_p\,|\, n\in\mN\}$. This is $a\in\mZ_p$ for which, for every $l\in\mN$, the set of $n\in\mN$ for which $p^l| (a-a_n)$ is in $\cU$. We set $t(\cU)=\lim_{\cU}n\in\mZ_p$ and let $d(\cU)\in\mF_p\subset K$ denote the class of $(0,1,2,\cdots)\in k^{\mN}$ in $K=k^{\mN}/\sim$. This is the same as the image of $t(\cU)$ under $\mZ_p\tto \mF_p$. 

We define the ultrapower  $\bV(\cU):=\prod_{\cU}\Vecc_k$, which is a (non-artinian) symmetric tensor category over $K$. We let $W\in\bV(\cU)$ be the object corresponding to $(k^n)_n\in\Vecc^{\times\mN}_k$. In particular $\dim W=d(\cU)$.
As observed in \cite[\S 3.1]{EHO}, we have $\Dim_+W=t(\cU)$ and by construction $\Fr_+W\simeq W$.

We also define the ultrapower of the category of supervector spaces  $\bV'(\cU):=\prod_{\cU}\mathsf{sVec}_k$. We let $W'\in\bV'(\cU)$ be the object corresponding to $(k^{0|n})_n\in\mathsf{sVec}^{\times\mN}_k$. In particular $\dim W'=-d(\cU)$ and $\Dim_+W=-t(\cU)$ and by construction $\Fr_+W=0$.

\begin{proof}[Proof of Theorem~\ref{Thmp}]
Since $\Fr_+$ and $\Dim_+$ are preserved by symmetric tensor functors, by Theorem~\ref{ThmUniTensor} it suffices to show that $\OB_k(\delta)$ admits faithful symmetric monoidal functors to tensor categories which send $V_\delta$ to objects with the appropriate properties.

For part (1), by the discussion in \ref{SecU}, it thus suffices to observe the following two facts. Firstly, via the axiom of choice, for any $t\in\mZ_p$ we can choose an ultrafilter $\cU$ with $t(\cU)=t$. Secondly, for $d=d(\cU)$, the symmetric monoidal functors $\OB_k(d)\to \bV(\cU)$, $V_d\mapsto W$  and $\OB_k(-d)\to \bV'(\cU)$, $V_{-d}\mapsto W'$ are faithful. The latter follows from the observation that $kS_n\to \End_{k}(\otimes^n V)$ is injective if $V=k^{m}$ or $V=k^{0|m}$ with $m\ge n$. 
\end{proof}


\subsubsection{} It seems difficult to show directly that the homological kernel of a faithful symmetric monoidal functor $F$ from $\OB_k(\delta)$ determines the $p$-adic dimension of $F(V_\delta)$. On the other hand, we can easily show that it determines whether $\Fr_+(F(V_\delta))$ vanishes as follows. 

Consider the following morphism in $\ba:=\OB_k(\delta)$:
$$f:(\otimes^p V_\delta^\ast )\otimes (\otimes^p V_\delta)\;\to\; ((\otimes^p V_\delta^\ast )\otimes (\otimes^p V_\delta))^{2p-2} $$
where the $2p-2$ endomorphisms of $(\otimes^p V_\delta^\ast )\otimes (\otimes^p V_\delta)$ which make up $f$ are given by $(\id-s_i)\otimes \id$ and $\id\otimes (\id- s_i)$, with $\{s_i\,|\,1\le i<p\}$ the generators of $S_p$ acting via the symmetric braiding.


\begin{prop}\label{PropFr}
For $f$ as above, we have $\Sigma^{\ba}(f)\simeq k$. For a faithful symmetric monoidal functor $\theta:\ba\to\bT$, $V_\delta\mapsto X$ to a symmetric tensor category $\bT$, we have
$$
\Sigma^{\ba}\theta(f)\;=\; \begin{cases}
0&\mbox{ if } \Fr_+(X)\not=0,\\
k&\mbox{ if } \Fr_+(X)=0.\\
\end{cases}
$$
\end{prop}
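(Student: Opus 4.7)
My plan is to compute the homology of the three-term sequence defining $\Sigma^{\ba}(f)$ (and $\Sigma^{\ba}\theta(f)$) directly, and to recognise the rightmost arrow as the canonical pairing whose adjoint is the map $\Gamma^p X\to\Sym^p X$ defining $\Fr_+(X)$.

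First I would identify $\ba(X^0,\unit)$. By rigidity and freeness of $\OB_k(\delta)$, morphisms $(V_\delta^*)^{\otimes p}\otimes V_\delta^{\otimes p}\to\unit$ are spanned by oriented Brauer diagrams pairing each $V_\delta^*$-strand with a $V_\delta$-strand; this gives a canonical basis $\{\pi_\sigma\mid \sigma\in S_p\}$ identifying $\ba(X^0,\unit)\cong k[S_p]$. The two families of endomorphisms of $X^0$ assembling $f$, namely $\{(\id-s_i)\otimes\id\}$ and $\{\id\otimes(\id-s_i)\}$, correspond under the basis to left and right multiplication by $e-s_i$. Since the $s_i$ generate $S_p$, the image of $\ba(X^1,\unit)\to\ba(X^0,\unit)$ is the two-sided augmentation ideal, so the cokernel is one-dimensional, generated by the class $[\pi_e]$; in particular $\Sigma^{\ba}(f)$ and $\Sigma^{\ba}\theta(f)$ are each either $0$ or $k$.

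Next, for any faithful symmetric monoidal $\theta:\ba\to\bT$, the kernel $\ker\theta(f)\subseteq (X^*)^{\otimes p}\otimes X^{\otimes p}$ is the simultaneous equaliser of the $s_i\otimes\id$ and $\id\otimes s_i$, i.e.\ $\Gamma^p X^*\otimes\Gamma^p X$. Writing $\pi_\sigma=\pi_e\circ(\id\otimes \sigma_X)$ and using that $\sigma_X$ acts as the identity on the invariants $\Gamma^p X$, every $\pi_\sigma$ restricts to a \emph{single} common morphism $p:\Gamma^p X^*\otimes\Gamma^p X\to\unit$. Under the canonical duality $\Gamma^p X^*\simeq (\Sym^p X)^{\vee}$, this $p$ is adjoint to the composite $\Gamma^p X\hookrightarrow X^{\otimes p}\twoheadrightarrow\Sym^p X$, whose image is $\Fr_+(X)$ by definition. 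Thus $p=0$ if and only if $\Fr_+(X)=0$, which together with the preceding paragraph gives part (b): $\Sigma^{\ba}\theta(f)=k$ precisely when $\Fr_+(X)=0$, and is zero otherwise.

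For part (a) on $\Sigma^{\ba}(f)$, I would invoke Lemma~\ref{LemSigmaMon}(1), which yields $\Sigma^{\ba}\theta(f)\subset\Sigma^{\ba}(f)$ for any faithful $\theta$. It therefore suffices to exhibit a single faithful symmetric monoidal functor with $\Fr_+(\theta(V_\delta))=0$, and the ultrapower functor $\ba\to\bV'(\cU)$ from the proof of Theorem~\ref{Thmp} (where $V_\delta$ is sent to the purely odd object $W'$, with $\Fr_+(W')=0$) does the job. The main obstacle is the identification in the preceding paragraph of the restricted pairing $p$ with the categorical map adjoint to $\Gamma^p X\to \Sym^p X$: it requires a careful diagram chase through the rigidity structure to confirm that both the duality $\Gamma^p X^*\simeq(\Sym^p X)^{\vee}$ and the factorisation of the $p$-fold evaluation through the invariants behave as claimed; once this categorical bookkeeping is in place, the rest is routine.
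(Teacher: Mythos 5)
Your argument is correct and follows essentially the same route as the paper: identify $\ba(X^0,\unit)$ with $kS_p$ so that composition with $f$ has image the augmentation ideal, note $\ker\theta(f)=\Gamma^p(X^\ast)\otimes\Gamma^pX$, and observe that the common restriction of the $\pi_\sigma$ to this kernel is the pairing whose vanishing is equivalent to $\Fr_+(X)=0$. Your explicit derivation of $\Sigma^{\ba}(f)\simeq k$ from Lemma~\ref{LemSigmaMon}(1) together with the ultrapower functor into $\bV'(\cU)$ supplies a lower bound that the paper's proof leaves implicit (it only records $\dim_k\Sigma^{\ba}(f)\le 1$), so this is a welcome completion rather than a deviation.
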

\begin{proof}
By adjunction, the homomorphism $\ba(f,\unit)$
in the definition of $\Sigma^{\ba}(f)$ can be rewritten as $(kS_p)^{\times 2(p-1)} \to kS_p$, where the $2(p-1)$ maps $kS_p\to kS_p$ are given by left and right multiplication with $1-s_i$. The latter has one-dimensional cokernel, represented by the identity morphism, so $\dim_k\Sigma^{\ba}(f)\le 1$. 

We have $\ker\theta(f)=\Gamma^p (X^\ast)\otimes \Gamma^p X$. By the above (and $\Sigma^{\ba}\theta(f)\subset\Sigma^{\ba}(f)$) it now follows that $\Sigma^{\ba}\theta(f)\not=0$ if and only if the composition
$$ \Gamma^p (X^\ast)\otimes \Gamma^p X\;\hookrightarrow\; \otimes^p X^\ast\otimes\otimes^p X\;\xrightarrow{\ev_{\otimes^p X}}\; \unit,$$
is zero. The latter is equivalent with $\Fr_+X=0$. \end{proof}
%
%
%
%
%

\begin{remark}In \cite[Theorem~2.3.1]{AbEnv} it is proved that for a field $F$ of characteristic zero and $d\in F$, the category $\OB_F(d)$ is self-splitting. By Examples~\ref{ExSelf} and~\ref{ExPrexHK}, the same is thus {\bf not} true for $\OB_k(\delta)$ with $\delta\in \mF_p\subset k$.
\end{remark}

\subsection{More strictly subcanonical homological kernels}\label{SecMono}

\begin{lemma}
Consider a faithful monoidal functor $\theta:\bb\to \bT$ from a rigid monoidal category $\bb$ over $k$ to a tensor category $\bT$. If there is a monomorphism $\iota$ in $\bb$ which is not sent to a monomorphism in $\bT$, then the homological kernel of $\theta$ is not canonical.
\end{lemma}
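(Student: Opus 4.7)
My plan is to produce, using rigidity of $\bb$, a single object $f$ of $\Noy\bb$ at which the sieves $\Sigma^{\bb}\theta\subset\Sigma^{\bb}$ (the inclusion coming from Lemma~\ref{LemSigmaMon}(1)) differ, which then gives $\Sigma^{\bb}\theta\subsetneq\Sigma^{\bb}$, i.e.\ the homological kernel is not canonical.

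Writing $\iota:X\to Y$ and $K:=\ker\theta(\iota)\neq 0$, I would take
$$f\;:=\;X^*\otimes\iota:\;X^*\otimes X\,\longrightarrow\, X^*\otimes Y\qquad\text{and}\qquad \alpha\;:=\;\ev_X:X^*\otimes X\to\unit.$$
First I would check that $f$ is monic in $\bb$: since $X^*$ is dualisable in the rigid category $\bb$, the functor $X^*\otimes-$ admits both a left and a right adjoint, hence is exact and in particular preserves monomorphisms. By the remark following Definition~\ref{DefHK1}, monicity of $f$ gives $\Sigma^{\bb}(f)=\Noy\bb(f,\tN\unit)$, so $[\alpha]\in\Sigma^{\bb}(f)$ automatically.

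The main step is then to show $[\alpha]\notin\Sigma^{\bb}\theta(f)$, for which by the equivalent description in Example~\ref{ExNotMax}(2) it is enough to verify that $\theta(\alpha)$ does not vanish on $\ker\theta(f)$. Monoidality of $\theta$ and rigidity of $\bT$ give canonical identifications $\theta(X^*)\simeq\theta(X)^*$ and $\theta(\alpha)=\ev_{\theta(X)}$, while biexactness of $\otimes$ in $\bT$ gives $\ker\theta(f)\simeq\theta(X)^*\otimes K$ with inclusion $\id\otimes\iota_K$, where $\iota_K:K\hookrightarrow\theta(X)$ denotes the kernel inclusion. The restriction of $\ev_{\theta(X)}$ along $\id\otimes\iota_K$ is the image of $\iota_K$ under the standard rigidity bijection $\bT(K,\theta(X))\xrightarrow{\sim}\bT(\theta(X)^*\otimes K,\unit)$, $\beta\mapsto \ev_{\theta(X)}\circ(\id\otimes\beta)$; as $\iota_K\neq 0$ this restriction is non-zero, completing the argument.

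The only genuine choice in the proof is the pair $(f,\alpha)$: tensoring the original monic on the left with $X^*$ simultaneously preserves monicity (so that the canonical sieve $\Sigma^{\bb}(f)$ is the full Hom-set) and manufactures a canonical non-trivial test morphism $\ev_X:X^*\otimes X\to\unit$ whose image under $\theta$ detects the non-zero kernel $K$. Everything after that is a routine computation inside the rigid tensor category $\bT$.
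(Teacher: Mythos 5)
Your proposal is correct and follows essentially the same route as the paper: both take $f=X^*\otimes\iota$ and $\alpha=\ev_X$, invoke Example~\ref{ExNotMax}(2) via monicity of $f$, and use the rigidity adjunction to see that $\theta(\ev_X)$ restricted to $\ker\theta(f)\simeq\theta(X)^*\otimes\ker\theta(\iota)$ corresponds to the non-zero kernel inclusion. The only cosmetic quibble is calling $X^*\otimes-$ ``exact'' on the non-abelian category $\bb$; what you actually need and what your adjoint argument delivers is that it preserves monomorphisms.
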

\begin{proof}
Let $m:X\to Y$ denote the image of $\iota:A\to B$ under $\theta$. Since $f:=A^\ast\otimes \iota$ is also a monomorphism, by Example~\ref{ExNotMax}(2) it suffices to show that there exists a morphism $\alpha:A^\ast\otimes A\to\unit$ in $\ba$ for which $\theta(\alpha)$ does not restrict to zero on $\ker(X^\ast\otimes m)$. We take $\alpha=\ev_A$, which $\theta$ sends to $\ev_X$. By adjunction, the relevant restriction is zero if and only if $\ker(m)\to X$ (the inclusion of the kernel) is zero. This proves the lemma.
\end{proof}

We give concrete examples of the above situation.

\begin{example}
In the following two cases, the monomorphism $\mu$ from Lemma~\ref{mumono} is not sent to a monomorphism.
\begin{enumerate}
\item Assume that char$(k)=0$ and take $\delta\not\in\mZ$. The symmetric monoidal functor $\theta:\MO(2\delta,2\delta)\to \OB^1(\delta)$, to the semisimple tensor category $\OB^1(\delta)$, which sends $\mu$ to $\id_{\bullet}\oplus 0: \bullet\oplus\bullet \to \bullet\oplus\bullet$ is faithful.
\item Let $k$ be arbitrary and let $\cU$ be a non-principal ultrafilter on $\mN$. We use the notation of \ref{SecU}. For $\delta\in K=k^{\cU}$ given by the class of $(n)_n\in k^{\times \mN}$, the symmetric monoidal functor $\MO_k(\delta,\delta-1)\to \bV(\cU)$ which sends $\mu$ to a morphism represented by some surjections $\{k^n\tto k^{n-1}|n\in\mN\}$ is faithful.
\end{enumerate}
\end{example}

\begin{example}
The monomorphism $\iota$ in $\Seq$ from Lemma~\ref{Lemiota} is never sent to a monomorphism under any faithful monoidal functor to a tensor category (as it is sent to a morphism of the form $(a,b):A\oplus B\to C$ for an epimorphism $b:B\to C$ and a non-zero morphism $a:A\to C$).

Concrete examples of such faithful monoidal functors are given by the obvious functor $\Seq\to\OB^1(\delta)$, for  $\charr(k)=0$ and $\delta\not\in\mZ$, or can be constructed for arbitrary $k$ via ultrafilters.
\end{example}

\subsection{Motives}
Let $K$ be a field of characteristic zero and consider a Weil cohomology theory~$H^\bullet$, on the category of smooth projective varieties over a field $F$, with values in $K$.
Let $\Mot_H(F)$ denote the category of Chow motives modulo homological equivalence. This is a rigid symmetric monoidal category over $k=\mQ$ such that $H^\bullet$ extends to a faithful $\mQ$-linear symmetric monoidal functor $H:\Mot_H(F)\to\Vecc^{\mZ}_K$. 
In \cite[\S 3.2]{SchMot}, Sch\"appi constructed a tensor category $\mathbf{M}_H(F)$ yielding a commutative diagram
$$
\xymatrix{
\Mot_H(F)\ar[rr]\ar[rd]_H&&\mathbf{M}_H(F)\ar[ld]\\
&\Vecc^{\mZ}_K
}
$$
of symmetric monoidal functors where the right downwards arrow is a tensor functor.

\begin{lemma}
The tensor category $\mathbf{M}_H(F)$ is the universal tensor category associated to $\Mot_H(F)$ and the homological kernel of $H:\Mot_H(F)\to\Vecc_K^{\mZ}$ in Theorem~\ref{ThmUniTensor}. 
\end{lemma}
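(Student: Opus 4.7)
The plan is to identify $\mathbf{M}_H(F)$ with the universal tensor category $\bT_\gamma$ supplied by Theorem~\ref{ThmUniTensor}(2), where $\gamma := \Sigma^{\Mot_H(F)} H$ is the homological kernel of $H$.

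First I would verify that $\gamma\in\Sw(\Mot_H(F))$ and that the functor $\phi:\Mot_H(F)\to\mathbf{M}_H(F)$ also has homological kernel $\gamma$. Faithfulness of $\phi$ is forced by the factorisation $H\simeq(\mathbf{M}_H(F)\to\Vecc_K^{\mZ})\circ\phi$ together with faithfulness of $H$. Applying Remark~\ref{RemDefSigma}(2) to the faithful exact tensor functor $\mathbf{M}_H(F)\to\Vecc_K^{\mZ}$ then gives $\Sigma^{\Mot_H(F)}\phi = \Sigma^{\Mot_H(F)} H=\gamma$, so indeed $\gamma\in\Sw(\Mot_H(F))$.

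Next, by Theorem~\ref{ThmUniTensor}(2) applied to the universal functor $F_\gamma:\Mot_H(F)\to\bT_\gamma$, there is an essentially unique tensor functor $T:\bT_\gamma\to\mathbf{M}_H(F)$ with $T\circ F_\gamma\simeq\phi$, and by uniqueness the triangle of symmetric monoidal functors to $\Vecc_K^{\mZ}$ commutes. It remains to show that $T$ is an equivalence. For this I would invoke the concrete construction of $\mathbf{M}_H(F)$ in \cite[\S 3.2]{SchMot}, which (translated into the language of the present paper) realises $\Ind\mathbf{M}_H(F)$ as a biclosed Grothendieck category into which $\Mot_H(F)$ maps via a prexact functor whose image is AB3-tight. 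Since Theorems~\ref{ThmUniTensor}(2) and~\ref{ThmUniHSh} identify $\Ind\bT_\gamma$ with $\HSh(\Mot_H(F),\cR)$ — a category that also receives $\Mot_H(F)$ AB3-tightly via a prexact functor of the same homological kernel — Lemma~\ref{LemRec} applies and forces the ind-extension of $T$ to be an equivalence, hence so is $T$ itself.

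The main obstacle lies in the last step: one needs to check, from the details of Sch\"appi's construction in \cite{SchMot}, that $\Ind\mathbf{M}_H(F)$ meets the hypotheses of Lemma~\ref{LemRec}, namely that the structure functor from $\Mot_H(F)$ is prexact and AB3-tight. Once those two properties are extracted, the comparison is purely formal from the universal properties already established in Section~\ref{LocAbEnvTheory}.
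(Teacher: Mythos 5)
The first half of your argument is sound: faithfulness of $\phi:\Mot_H(F)\to\mathbf{M}_H(F)$ and the identity $\Sigma^{\ba}\phi=\Sigma^{\ba}H$ do follow from Remark~\ref{RemDefSigma}(2) applied to the tensor functor $\mathbf{M}_H(F)\to\Vecc_K^{\mZ}$, and Theorem~\ref{ThmUniTensor}(2) then produces the comparison functor $T:\bT_\gamma\to\mathbf{M}_H(F)$. The gap is in the last step, which is where all the content lies. First, the functor $\tHZ:\ba\to\HSh(\ba,\cR)$ is \emph{not} prexact in general: by Theorem~\ref{ThmIota}(3) it is prexact precisely when $\cR$ lies in the image of $\iota$, i.e.\ when $\HSh(\ba,\cR)$ is already a sheaf category on $\ba$ itself. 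Likewise, if the structure functor $\Mot_H(F)\to\Ind\mathbf{M}_H(F)$ were prexact (and it is faithful), then Corollary~\ref{CorFlatAll} and Example~\ref{ExPrexHK} would force $\Sw(\Mot_H(F))$ to be a singleton and Sch\"appi's construction would collapse to a sheaf category on $\Mot_H(F)$; the whole point of his working inside sheaves on $K^b\Mot_H(F)$ is that this prexactness is unavailable. Second, even granting the correct homological topology and AB3-tightness, Lemma~\ref{LemRec} has a third hypothesis you do not address: essential surjectivity of $-\circ\phi:\ABf(\bA,\bC)\to[\ba,\bC]_{\cR}$ for all $\bC\in\ABf$. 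That hypothesis is the universal property of $\Ind\mathbf{M}_H(F)$, i.e.\ essentially the statement being proved; AB3-tightness alone only lets you invoke Lemma~\ref{LemTight} once you already know $\Ind T$ is \emph{fully} faithful, and faithful exact cocontinuous is not enough for that.

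What is actually needed, and what the paper does, is to open up Sch\"appi's construction: $\mathbf{M}_H(F)$ is the smallest full subcategory of compact objects of $\Sh(K^b\ba,\cT)$, for $\cT$ the topology of $H^{\mZ}_\Delta$, containing the image of $\ba$ and closed under finite sums, (co)kernels, tensor products and duals. Lemma~\ref{LemMinTensor} shows the monoidal closure conditions are automatic, so one is reduced to the minimal abelian (equivalently, after passing to ind-completions, minimal AB3) subcategory containing the image of $\ba$; and Theorem~\ref{ThmAlternative}(2) identifies exactly this minimal AB3 subcategory of $\Sh(K^b\ba,\hat{\cR})$ with $\Sh(K^b\ba,\cR)\simeq\Ind\bT_\gamma$, where $\cR$ is the topology of $H^0_\Delta$. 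Your proposal never engages with this comparison between the topologies of $H^{\mZ}_\Delta$ and $H^{0}_\Delta$, which is the crux of the lemma.
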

\begin{proof}Set $\ba:=\Mot_H(F)$.
Tracing through the construction of $\mathbf{M}_H(F)$ in \cite[\S 2.2 and \S 3.1]{SchMot}, we can conclude the following. In the notation of the current paper, $\mathbf{M}_H(F)$ is the smallest full subcategory of compact objects in $\Sh(K^b\ba,\cT)$, for $\cT$ the Grothendieck topology associated to $H_{\Delta}^{\mZ}:K^b\ba\to (\Vecc^{\mZ}_K)^{\mZ}$, which contains the image of $\ba$ and is closed under finite direct sums, (co)kernels, tensor products and duals. 

By Lemma~\ref{LemMinTensor} below, we can leave out the last two requirements. That $\mathbf{M}_H(F)$ is equivalent to the category of compact objects in $\Sh(K^b\ba,\cR)$, with $\cR$ the Grothendieck topology of $H_{\Delta}^{0}:K^b\ba\to \Vecc^{\mZ}_K$ then follows quickly from Theorem~\ref{ThmAlternative}(2).
\end{proof}

\begin{remark}
The new construction of $\mathbf{M}_H(F)$ as an instance of Theorem~\ref{ThmUniTensor} reveals the following:
\begin{enumerate}
\item $\mathbf{M}_H(F)$ has a universal property which extends the existence of $\mathbf{M}_H(F)\to\Vecc^{\mZ}_K$ from \cite{SchMot}.
\item We can describe (the ind-completion of) $\mathbf{M}_H(F)$ directly as a category of sheaves on $\Noy \Mot_H(F)$ or $K^b\Mot_H(F)$, rather than as a certain maximal subcategory of a category of sheaves on $K^b\Mot_H(F)$.
\end{enumerate}
\end{remark}

\begin{remark} The observations from this section extend to any `basic tannakian factorisation' as in \cite[\S 3.1]{SchMot}. 
\end{remark}

\begin{lemma}\label{LemMinTensor}
Consider a monoidal functor $\theta:\bb\to\bT$ from a rigid monoidal category $\bb$ to a tensor category $\bT$. Let $\bA\subset\bT$ be the minimal abelian subcategory that contains the image of $\theta$. Then $\bA$ is a tensor category, with tensor product inherited from $\bT$.
\end{lemma}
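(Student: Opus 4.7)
The plan is to verify that $\bA$ inherits enough of the monoidal structure of $\bT$ to be a tensor category. Since $\theta(\unit_\bb)\simeq\unit$ lies in $\theta(\bb)\subseteq\bA$, we have $\End_\bA(\unit)=\End_\bT(\unit)=k$, and $\bA$ is abelian by construction. It is therefore enough to show $\bA$ is closed in $\bT$ under the tensor product and under the left and right dual functors, as the rigidity data (evaluations and coevaluations) are then already present in $\bA$, being morphisms of $\bT$ between objects of $\bA$ and hence of the full subcategory $\bA$.

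For closure under duality, I would first recall that in the rigid monoidal category $\bT$ the assignments $X\mapsto X^\ast$ and $X\mapsto{}^\ast X$ are mutually inverse contravariant auto-equivalences; being equivalences of abelian categories, each is exact, and in particular converts a short exact sequence $0\to K\to X\to Q\to 0$ in $\bT$ to a short exact sequence $0\to Q^\ast\to X^\ast\to K^\ast\to 0$ (and analogously for ${}^\ast(-)$). Now set $\bA^\vee:=\{X\in\bA\mid X^\ast\in\bA \text{ and } {}^\ast X\in\bA\}$. For $f\colon X\to Y$ in $\bA^\vee$, the morphism $f^\ast\colon Y^\ast\to X^\ast$ lies in $\bA$, and exactness of $(-)^\ast$ gives $(\ker f)^\ast\simeq\coker(f^\ast)$ and $(\coker f)^\ast\simeq\ker(f^\ast)$, both in $\bA$; symmetrically for ${}^\ast(-)$. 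Hence $\bA^\vee$ is an abelian subcategory of $\bT$. As $\theta$ is a monoidal functor between rigid monoidal categories it preserves left and right duals, so $\theta(\bb)\subseteq\bA^\vee$; minimality of $\bA$ forces $\bA^\vee=\bA$.

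For closure under the tensor product I would proceed in two steps. Fix $X$ in the image of $\theta$ and set $\bC_X:=\{Y\in\bA\mid X\otimes Y\in\bA\}$. Since every object of $\bT$ has both a left and a right dual, the functor $X\otimes-$ admits both adjoints and is therefore exact, so $\bC_X$ is closed under kernels and cokernels in $\bA$. Because $\theta$ is monoidal, $\theta(\bb)\subseteq\bC_X$, and minimality gives $\bC_X=\bA$. Next, for arbitrary $Y\in\bA$ put $\bD_Y:=\{X\in\bA\mid X\otimes Y\in\bA\}$; the previous step shows $\theta(\bb)\subseteq\bD_Y$, exactness of $-\otimes Y$ makes $\bD_Y$ abelian in $\bA$, and minimality again yields $\bD_Y=\bA$. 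Thus $X\otimes Y\in\bA$ for all $X,Y\in\bA$.

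The only non-formal ingredient is the exactness of duality used in the second paragraph; everything else is a routine minimality-closure argument. I expect that to be the main thing worth writing out carefully, after which the assembly of the statement is immediate.
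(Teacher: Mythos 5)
Your proof is correct and takes essentially the same approach as the paper: a minimality-closure argument, with the two-step bootstrap (first tensoring with objects in the image of $\theta$, then with arbitrary objects of $\bA$) for closure under $\otimes$, and exactness of duality for closure under duals. The only cosmetic difference is in the duality step, where the paper considers the full subcategory of all duals of objects of $\bA$ and shows it contains $\bA$, whereas you consider the subcategory $\bA^\vee$ of objects of $\bA$ whose duals lie in $\bA$; the two arguments are interchangeable.
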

\begin{proof}
For some $B\in\bb$, we define $\bA_0\subset\bA$ the full subcategory of objects $X\in\bA$ for which $X\otimes\theta( B)\in\bA$. This is an abelian subcategory of $\bT$ which contains the image of $\theta$ and so $\bA_0=\bA$. This way we can prove that $\bA$ is closed under tensor product with objects in the image of $\theta$. Subsequently we can prove similarly that $\bA$ is a monoidal subcategory of $\bT$.
Finally, that $\bA$ is closed under taking duals follows from considering the full subcategory of all (left or right) duals of the objects in $\bA$. It is an abelian subcategory of $\bT$ containing the image of $\theta$, so it contains $\bA$, and it follows easily that this must in fact be an equality.
\end{proof}

\appendix

\section{Some universal monoidal categories}\label{App}

The {\em oriented Brauer category} $\OB$ from \cite{BCNR} was introduced as the universal rigid symmetric monoidal category on one object in \cite{Deligne}. We pretend that $\OB$ stands for `object', and introduce the universal rigid symmetric monoidal categories on one morphism and on one endomorphism as $\MO$ and $\EN$. Consider a field extension $K/k$ (potentially $K=k$).

For any category $\bC$ and a class of objects $C\subset\Ob\bC$, the {\bf groupoid corresponding to~$C$} is the subcategory of $\bC$ with objects $C$ and morphisms given by the isomorphisms in $\bC$.

\subsection{One object}\label{SecOB} Fix $\delta\in k$.
We denote by $\OB^0_k(\delta)$, the specialisation of the oriented Brauer category $\OB$ of \cite[\S 1]{BCNR} at $\delta$. Concretely, objects in $\OB^0_k(\delta)$ are words in the alphabet $\{\bullet,\circ\}$, and the morphism space between two such words is spanned by all pairings of the letters such that we only pair the same colour on different lines and different colours on the same line, when we place the target on a horizontal line above the source. An example of a morphism from $\bullet\circ\bullet\circ\bullet\circ$ to $\bullet\circ\circ\bullet\bullet\circ\circ\bullet$ is
$$
\begin{tikzpicture}[scale=1,thick,>=angle 90]
\begin{scope}[xshift=4cm]
\draw  (2.8,-0.5) -- +(0,1.5);
\draw (4,-0.5) to [out=90, in=180] +(0.3,0.3);
\draw (4.6,-0.5) to [out=90, in=0] +(-0.3,0.3);

\draw (3.4,-0.5) to [out=90, in=180] +(0.9,0.6);
\draw (5.2,-0.5) to [out=90, in=0] +(-0.9,0.6);

\draw (3.4,1) to [out=-90, in=-90] +(1.2,0);

\draw (5.2,1) to [out=-90, in=-90] +(1.2,0);

\draw (5.8,-0.5) to [out=120, in=-60] +(-1.8,1.5);

\draw (5.8,1) to [out=-90, in=-90] +(1.2,0);

\draw (3.4,-0.5) node[]{$\circ$};
\draw (4.6,-0.5) node[]{$\circ$};
\draw (5.8,-0.5) node[]{$\circ$};

\draw (3.4,1) node[]{$\circ$};
\draw (4,1) node[]{$\circ$};
\draw (5.8,1) node[]{$\circ$};
\draw (6.4,1) node[]{$\circ$};

\draw (2.8,-0.5) node[]{$\bullet$};
\draw (4,-0.5) node[]{$\bullet$};
\draw (5.2,-0.5) node[]{$\bullet$};

\draw (2.8,1) node[]{$\bullet$};
\draw (4.6,1) node[]{$\bullet$};
\draw (5.2,1) node[]{$\bullet$};
\draw (7,1) node[]{$\bullet$};
\end{scope}
\end{tikzpicture}
$$

Composition of morphisms corresponds to concatenation of the diagrams with evaluation of loops at $\delta$. The tensor product corresponds to juxtaposition of diagrams. We refer to \cite{BCNR, Deligne} for a full description. We denote by  $\OB_k(\delta)$, or $\OB(\delta)$ if $k$ is clear from the context, the additive envelope of  $\OB^0_k(\delta)$. We will also typically denote $\bullet$ by $V_\delta$.

The following well-known lemma follows as in \cite[Proposition~10.3]{Deligne}.

\begin{lemma}\label{UniOB}
Let $\ba$ be a rigid symmetric monoidal category over $K$. Evaluation at $V_\delta=\bullet$ yields an equivalence between $[\OB_k(\delta),\ba]^{s\otimes}$ and the groupoid corresponding to the objects in $\ba$ of dimension $\delta$.\end{lemma}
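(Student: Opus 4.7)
The plan is to adapt the proof of \cite[Proposition~10.3]{Deligne}, which establishes the analogous universal property for the unparametrised oriented Brauer category as the free $k$-linear rigid symmetric monoidal category on one object; the specialisation at $\delta$ merely cuts the target groupoid down from all objects of $\ba$ to those of dimension $\delta$.

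First I would check that evaluation at $V_\delta$ takes values in $\mathcal{G}_\delta$, the groupoid of objects of $\ba$ of dimension $\delta$. By construction of the specialisation, the loop $\ev_\bullet\circ\co_\bullet\in\OB_k(\delta)(\unit,\unit)$ equals $\delta\cdot\id_\unit$, so any $F\in[\OB_k(\delta),\ba]^{s\otimes}$ sends $V_\delta$ to an object of categorical dimension $\delta$.

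For essential surjectivity, given $X\in\ba$ with $\dim X=\delta$, I would construct $F\colon\OB_k(\delta)\to\ba$ by sending each word in $\bullet,\circ$ to the corresponding tensor product of copies of $X$ and of a fixed dual $X^*$, and sending each diagrammatic morphism to the analogous composite built from the symmetric braiding of $\ba$ and the chosen duality data for $X$. That this assignment respects the defining relations among such diagrams is the content of \cite[Proposition~10.3]{Deligne}, and that it descends through the specialisation $\OB_k\to\OB_k(\delta)$ follows from $\dim X=\delta$.

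Finally, for full faithfulness on morphisms I would use rigidity of $\ba$. For $F,G\in[\OB_k(\delta),\ba]^{s\otimes}$ and a monoidal natural transformation $\eta\colon F\Rightarrow G$, the $\otimes$-compatibility of $\eta$ reduces it to the pair $(\eta_{V_\delta},\eta_{V_\delta^*})$, while compatibility with $F(\co_\bullet)$ and $F(\ev_\bullet)$ combined with the snake relations forces $\eta_{V_\delta^*}$ to be the transpose of $\eta_{V_\delta}^{-1}$; in particular $\eta_{V_\delta}$ is automatically invertible, so $[\OB_k(\delta),\ba]^{s\otimes}$ is a groupoid and evaluation at $V_\delta$ is fully faithful onto $\mathcal{G}_\delta$. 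Conversely, any isomorphism $F(V_\delta)\xrightarrow{\sim} G(V_\delta)$ extends uniquely to a monoidal natural isomorphism by the same formula. The only real obstacle is the coherence statement used in the construction of $F$, namely that every relation holding among oriented Brauer diagrams is forced by the axioms of a rigid symmetric monoidal category; this is classical and established in \cite[\S 10]{Deligne}.
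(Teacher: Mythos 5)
Your proposal is correct and follows the same route as the paper, which simply defers to \cite[Proposition~10.3]{Deligne}: the dimension constraint from the loop relation, essential surjectivity via the diagrammatic construction from chosen duality data, and full faithfulness from the fact that a monoidal natural transformation between symmetric monoidal functors out of a rigid category is determined by its value on the generator and is automatically invertible. No gaps.
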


\begin{lemma}\label{LemZ}\cite[Th\'eor\`eme~10.5]{Deligne}
Assume $\mathrm{char}(k)=0$ and $\delta\not\in\mZ$.
The Karoubi envelope (idempotent completion) $\OB^1(\delta)$ of $\OB(\delta)$ is a semisimple tensor category over $k$.
\end{lemma}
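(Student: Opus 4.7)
The plan is to reduce semisimplicity of $\OB^1(\delta)$ as an abelian $k$-linear category to the classical semisimplicity of each endomorphism algebra $\End_{\OB(\delta)}(X)$, and then to invoke the classification of when the walled Brauer algebra is semisimple.

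First, I would observe that the symmetric monoidal structure and the relations in $\OB^0(\delta)$ imply that every object is isomorphic to a direct sum of objects of the form $V_\delta^{\otimes r}\otimes (V_\delta^\ast)^{\otimes s}$, and that the endomorphism algebra of $V_\delta^{\otimes r}\otimes (V_\delta^\ast)^{\otimes s}$ in $\OB(\delta)$ is isomorphic to the walled Brauer algebra $B_{r,s}(\delta)$. In particular all hom spaces are finite-dimensional, and $\End_{\OB(\delta)}(\unit)\cong k$ since the endomorphisms of the empty word consist only of scalars (closed loops evaluate to powers of $\delta$).

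Second, I would establish the crucial classical input: for $\charr(k)=0$ and $\delta\notin\mZ$, the walled Brauer algebra $B_{r,s}(\delta)$ is semisimple over $k$ for every $(r,s)\in\mN^2$. The standard route is to use the tensor functor $\OB(N)\to\Rep GL(N)$ realising $V_N\mapsto k^N$ (whose existence follows from Lemma~\ref{UniOB}): for $N\ge r+s$ this map $B_{r,s}(N)\to\End_{GL(N)}((k^N)^{\otimes r}\otimes ((k^N)^\ast)^{\otimes s})$ is an isomorphism (the first fundamental theorem for $GL(N)$), so $B_{r,s}(N)$ is semisimple for all but finitely many integers $N$. A Gram-determinant computation then shows that the discriminant of $B_{r,s}(\delta)$ as a polynomial in $\delta$ has zeros contained in $\mZ$, so semisimplicity persists at all $\delta\notin\mZ$. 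The main obstacle is precisely this ``non-vanishing of the Gram determinant outside $\mZ$'' step, which is where the hypothesis $\delta\notin\mZ$ enters in an essential way.

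Third, I would pass to the Karoubi envelope. For an additive $k$-linear category with finite-dimensional hom spaces in which every endomorphism algebra is a (finite-dimensional) semisimple $k$-algebra, the Karoubi envelope is automatically an abelian semisimple $k$-linear category: idempotents split by construction, indecomposable objects correspond to isomorphism classes of primitive idempotents in endomorphism algebras, and any object decomposes as a finite direct sum of such indecomposables with $\End$ of each indecomposable a division algebra over $k$. Together with $\End(\unit)=k$ this gives an abelian category over $k$. Rigidity of $\OB^1(\delta)$ is inherited from $\OB(\delta)$ since duals of summands are cut out by the transpose idempotents. Hence $\OB^1(\delta)$ is a semisimple tensor category over $k$, as required.
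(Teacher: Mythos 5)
The paper offers no proof of this lemma at all: it is stated purely as a citation of Deligne's Th\'eor\`eme~10.5, so there is no internal argument to compare against. Your outline is, in essence, a reconstruction of the standard proof of that cited result, and its structure is sound: (i) every object of $\OB(\delta)$ is a direct summand of a direct sum of words, each word is isomorphic via the braiding to some $V_\delta^{\otimes r}\otimes(V_\delta^\ast)^{\otimes s}$ with endomorphism algebra the walled Brauer algebra $B_{r,s}(\delta)$, and $\End(\unit)=k$; (ii) an idempotent-complete $k$-linear additive category with finite-dimensional hom spaces and semisimple endomorphism algebras is semisimple abelian, and rigidity passes to the Karoubi envelope; so everything reduces to semisimplicity of $B_{r,s}(\delta)$ for $\delta\notin\mZ$ in characteristic zero. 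The one point to be clear-eyed about is that your second step is where the entire content of the lemma lives: the assertion that the non-semisimple locus of $B_{r,s}(\delta)$ is contained in $\mZ$ is not a routine consequence of the generic-semisimplicity argument (which only gives a finite, a priori unknown, set of bad values of $\delta$); it requires the explicit Gram-determinant computations carried out by Deligne in \S 10 of the cited paper (equivalently, the classification of semisimplicity for walled Brauer algebras due to Cox--De~Visscher--Doty--Martin). You correctly flag this as the essential use of the hypothesis $\delta\notin\mZ$ and defer it, which is a reasonable level of detail given that the paper defers the whole statement to the literature, but as written your argument is an outline resting on that black box rather than a self-contained proof.
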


\subsection{One morphism}
Fix $\delta,t\in k$.
We denote by $\MO^0_k(\delta,t)$ the following strict rigid monoidal category. Objects in $\MO^0_k(\delta,t)$ are words in the alphabet $\{\bullet,\circ, \blacksquare,\square\}$, and the space of morphisms between two such words has basis given by all pairings of the letters such that:
\begin{itemize}
\item Any occurrence of a symbol can be paired with an occurrence of the same symbol on the other line.
\item We can pair $\bullet$ with $\circ$, and $\blacksquare$ with $\square$, if they are on the same line.
\item We can pair $\bullet$ on the lower line with $\blacksquare$ on the top line; and $\square$ on the lower line with $\circ$ on the top line.
\end{itemize}
An example of a diagram representing a morphism from $\bullet\circ\bullet\circ\bullet\square$ to $\bullet\circ\circ\bullet\bullet\square\circ\blacksquare$ is
$$
\begin{tikzpicture}[scale=1,thick,>=angle 90]
\begin{scope}[xshift=4cm]
\draw  (2.8,-0.5) -- +(0,1.5);
\draw (4,-0.5) to [out=90, in=180] +(0.3,0.3);
\draw (4.6,-0.5) to [out=90, in=0] +(-0.3,0.3);

\draw (3.4,-0.5) to [out=90, in=180] +(0.9,0.6);
\draw (5.2,-0.5) to [out=90, in=0] +(-0.9,0.6);

\draw (3.4,1) to [out=-90, in=-90] +(1.2,0);

\draw (5.2,1) to [out=-90, in=-90] +(1.2,0);

\draw (5.8,-0.5) to [out=120, in=-60] +(-1.8,1.5);

\draw (5.8,1) to [out=-90, in=-90] +(1.2,0);

\draw (3.4,-0.5) node[]{$\circ$};
\draw (4.6,-0.5) node[]{$\circ$};
\draw (5.8,-0.5) node[]{$\square$};

\draw (3.4,1) node[]{$\circ$};
\draw (4,1) node[]{$\circ$};
\draw (5.8,1) node[]{$\square$};
\draw (6.4,1) node[]{$\circ$};

\draw (2.8,-0.5) node[]{$\bullet$};
\draw (4,-0.5) node[]{$\bullet$};
\draw (5.2,-0.5) node[]{$\bullet$};

\draw (2.8,1) node[]{$\bullet$};
\draw (4.6,1) node[]{$\bullet$};
\draw (5.2,1) node[]{$\bullet$};
\draw (7,1) node[]{$\blacksquare$};
\end{scope}
\end{tikzpicture}
$$

Composition of morphisms corresponds to concatenation of the diagrams with evaluation of loops containing $\bullet$ (and hence also $\circ$) at $\delta$ and loops containing $\blacksquare$ at $t$. The tensor product corresponds to juxtaposition of diagrams. We denote by  $\MO_k(\delta,t)$, or $\MO(\delta,t)$ if $k$ is clear from the context, the additive envelope of  $\MO^0_k(\delta,t)$. We denote the morphism corresponding to the unique diagram from $\bullet$ to $\blacksquare$ by $\mu$.

\begin{lemma}\label{mumono}
The morphism $\mu:\bullet\to\blacksquare$ in $\MO(\delta,t)$ is a monomorphism.
\end{lemma}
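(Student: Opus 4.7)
The plan is to prove monicity directly by working with the diagram basis. Since $\MO_k(\delta,t)$ is the additive envelope of $\MO^0_k(\delta,t)$, and any morphism out of a formal direct sum is a row of morphisms out of its summands, it suffices to show that for every word $X$ in the alphabet $\{\bullet,\circ,\blacksquare,\square\}$, the $k$-linear map
$$\mu \circ (-)\;:\;\MO^0_k(\delta,t)(X,\bullet)\;\to\;\MO^0_k(\delta,t)(X,\blacksquare)$$
is injective.

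The hom-space $\MO^0_k(\delta,t)(X,\bullet)$ has a $k$-basis indexed by admissible pairings on the letters of $X$ (on the bottom line) together with the single $\bullet$ (on the top line). By the pairing rules listed in the definition, the solitary $\bullet$ on top cannot pair with anything on its own line, so it must be matched with some $\bullet$ occurring in $X$. The morphism $\mu:\bullet\to\blacksquare$ is itself represented by a single vertical strand. I would next observe that composing with $\mu$ amounts to stacking this single strand on top of any diagram $d$ representing a basis element of $\MO^0_k(\delta,t)(X,\bullet)$: the top $\bullet$ of $d$ is identified with the bottom $\bullet$ of $\mu$ and becomes an interior point of a longer strand, whose new endpoints are a $\bullet$ in $X$ and the $\blacksquare$ on top. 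This new pairing is one of the admissible types (a $\bullet$ on the lower line with a $\blacksquare$ on the upper line), and no closed loop is produced, so no scalar $\delta$ or $t$ is generated. Hence composition with $\mu$ sends basis elements to basis elements; the induced pairing on $X$ and the identity of the vertex in $X$ that was matched with the top vertex are preserved, so distinct basis diagrams have distinct images.

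Injectivity of $\mu\circ(-)$ on basis elements then gives injectivity on the whole hom-space by $k$-linearity, proving that $\mu$ is a monomorphism in $\MO^0_k(\delta,t)$, and therefore, by the reduction above, in $\MO_k(\delta,t)$. I do not expect a serious obstacle: the only subtle point is confirming that stacking $\mu$ on top of a diagram never produces a loop (and thus never rescales anything to zero via a specialisation like $\delta=0$ or $t=0$), which is immediate because $\mu$ consists of a single strand meeting the interface at exactly one vertex.
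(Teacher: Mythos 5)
Your proof is correct and follows essentially the same route as the paper: reduce to $\MO^0_k(\delta,t)$ and observe that composition with $\mu$ carries the diagram basis of $\Hom(X,\bullet)$ bijectively onto the basis diagrams of $\Hom(X,\blacksquare)$ in which the top $\blacksquare$ is paired with a $\bullet$ of $X$, with no loops created. The paper states this in one line; your version just spells out the details.
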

\begin{proof}
It suffices to show that $\mu$ is a monomorphism in $\MO^0(\delta,t)$. For any word $w$ in $\{\bullet,\circ, \blacksquare,\square\}$, composition with $\mu$ yields an isomorphism between $\Hom(w,\bullet)$ and the subspace of $\Hom(w,\blacksquare)$ spanned by diagrams in which $\blacksquare$ on the top row is paired with some $\bullet$.
\end{proof}

\begin{lemma}\label{UniMO}
Let $\ba$ be a rigid symmetric monoidal category over $K$. Evaluation at $\mu$ yields an equivalence between $[\MO_k(\delta,t),\ba]^{s\otimes}$ and the groupoid corresponding to the objects in $\Arr\ba$ which are morphisms in $\ba$ from an object of dimension $\delta$ to one of dimension $t$.
\end{lemma}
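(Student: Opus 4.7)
The plan is to follow the proof of Lemma~\ref{UniOB}, which in turn follows \cite[Proposition~10.3]{Deligne}, adapted to incorporate the new generating morphism $\mu$. The category $\MO^0_k(\delta,t)$ contains two monoidal subcategories isomorphic to $\OB^0_k(\delta)$ and $\OB^0_k(t)$ (generated by $\{\bullet,\circ\}$ and $\{\blacksquare,\square\}$ respectively), together with the extra generator $\mu:\bullet\to\blacksquare$ and its dual $\mu^\ast:\square\to\circ$. Diagrammatically, every morphism in $\MO^0_k(\delta,t)$ decomposes as a composition of tensor products of the atomic pieces: identity strands, symmetric braidings, evaluations $\bullet\otimes\circ\to\unit$ and $\blacksquare\otimes\square\to\unit$, the matching coevaluations, and $\mu$ (with $\mu^\ast$ determined by $\mu$ and the duality data via the snake).

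First I would verify that evaluation at $\mu$ lands in the stated groupoid. For any $F\in[\MO_k(\delta,t),\ba]^{s\otimes}$, restricting $F$ to the two $\OB$-subcategories and applying Lemma~\ref{UniOB} yields $\dim F(\bullet)=\delta$ and $\dim F(\blacksquare)=t$, so $F(\mu):F(\bullet)\to F(\blacksquare)$ is an object of the groupoid described in the statement. A monoidal natural isomorphism $\eta:F\Rightarrow G$ gives isos $\eta_\bullet,\eta_\blacksquare$, and naturality at $\mu$ reads $\eta_\blacksquare\circ F(\mu)=G(\mu)\circ\eta_\bullet$, which is exactly an isomorphism in the groupoid.

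For essential surjectivity, given $f:X\to Y$ with $\dim X=\delta$ and $\dim Y=t$, I would choose duals $(X^\ast,\ev_X,\co_X)$ and $(Y^\ast,\ev_Y,\co_Y)$ and define a strict symmetric monoidal functor $F_f:\MO^0_k(\delta,t)\to\ba$ by $\bullet\mapsto X$, $\circ\mapsto X^\ast$, $\blacksquare\mapsto Y$, $\square\mapsto Y^\ast$ on objects, and on morphisms by sending each atomic generator to the corresponding structure morphism in $\ba$ ($\id$, the braiding, $\ev_X$ or $\ev_Y$, $\co_X$ or $\co_Y$, and $f$ in place of $\mu$), then passing to the additive envelope. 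By construction $F_f(\mu)=f$. For fullness on the groupoid, a pair of isos $(\phi,\psi):f\to g$ satisfying $\psi\circ f=g\circ\phi$ extends to a monoidal natural iso $F_f\Rightarrow F_g$ by setting $\eta_\bullet=\phi$, $\eta_\blacksquare=\psi$, $\eta_\circ=(\phi^\ast)^{-1}$, $\eta_\square=(\psi^\ast)^{-1}$, extending monoidally, and checking naturality on the atomic generators (automatic except at $\mu$, where it is the hypothesis).

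The main obstacle is well-definedness of $F_f$ on morphisms, i.e., that the assignment on generating atoms respects the relations among diagrams. These relations are of two types: (a) topological isotopy (planar deformation combined with strand permutation) and (b) evaluation of closed $\bullet$-loops to $\delta\cdot\id_{\unit}$ and closed $\blacksquare$-loops to $t\cdot\id_{\unit}$. Type (a) relations hold in any rigid symmetric monoidal category via Mac Lane coherence and the snake identities for $(X,X^\ast)$ and $(Y,Y^\ast)$; type (b) relations hold precisely by the hypotheses $\dim X=\delta$ and $\dim Y=t$. No further relations are imposed on diagrams involving the strand corresponding to $\mu$, so the argument reduces to exactly the content of the proof of Lemma~\ref{UniOB} applied twice, glued by the free generator $\mu$.
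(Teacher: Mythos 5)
Your argument is correct and is exactly the intended one: the paper states Lemma~\ref{UniMO} without proof, implicitly relying on the same Deligne-style universal-property argument cited for Lemma~\ref{UniOB}, which you reproduce and correctly extend by the freely adjoined generator $\mu$. Your implicit observation that no closed beaded loops can arise (since $\mu$ points from the $\bullet$-colour to the $\blacksquare$-colour and nothing points back) is the key reason no trace conditions on $f$ appear in the statement, in contrast to Lemma~\ref{UniEN}.
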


\subsection{One endomorphism}
Fix  
$$\Delta=(\delta_0,\delta_1,\delta_2,\cdots)\in\prod_{i=0}^\infty k.$$ Denote by $\EN_k^0(\Delta)$ the specialisation of the associated graded of the affine oriented Brauer category (see \cite[\S 3.4]{BCNR}), where we specialise a loop with $i$ dots to $\delta_i$. Concretely, objects in $\EN_k^0(\Delta)$ are finite words in the alphabet $\{\bullet,\circ\}$ and the morphism space between two such words has as basis all `decorated' pairings, where the rules for the pairings are as in \ref{SecOB}, and each pairing is decorated with a natural number $i\in \mN$. We can depict this graphically by drawing an appropriate number of indicators on each strand in the diagram. Composition of morphisms corresponds to concatenation of the diagrams, where the number of indicators on a strand are simply added, with evaluation of loops with $i$ indicators at $\delta_i$. In this category we then have for instance
$$\End(\otimes^r \bullet)\;\simeq\; k[x_1,x_2, \cdots, x_r] \# S_r,$$
for the canonical action of the symmetric group $S_r$ on $k[x_1,x_2, \cdots, x_r]$. We denote the morphism $\bullet \to \bullet$ corresponding to a line with one indicator (`$x_1$' in the above equation) by~$\varepsilon$.

  The tensor product corresponds to juxtaposition of diagrams. We refer to \cite{BCNR} for a full description. We denote by  $\EN_k(\Delta)$, or $\EN(\Delta)$ if $k$ is clear from the context, the additive envelope of $\EN^0_k(\Delta)$.

The following is observed in \cite[Corollary~3.6]{BCNR}.
\begin{lemma}\label{UniEN}
Let $\ba$ be a rigid symmetric monoidal category over $K$. Evaluation at $\varepsilon$ yields an equivalence between $[\EN_k(\Delta),\ba]^{s\otimes}$ and the groupoid corresponding to the objects in $\Arr\ba$ which are endomorphisms $f$ in $\ba$ that satisfy $\mathrm{Tr}(f^i)=\delta_i$ for $i\in\mN$.
\end{lemma}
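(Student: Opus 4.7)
The plan is to follow the standard template for such universal properties, using the diagrammatic generators-and-relations description of $\EN^0_k(\Delta)$ provided by the affine oriented Brauer category of \cite{BCNR}, and then passing to the additive envelope.

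First I would verify that evaluation at $\varepsilon$ is well-defined, i.e.\ lands in the claimed groupoid. Given a symmetric monoidal functor $F:\EN_k(\Delta)\to\ba$, the morphism $F(\varepsilon)$ is an endomorphism of $F(\bullet)$. Its $i$-th iterated power closed off into a loop in $\EN(\Delta)$ is a strand with $i$ indicators forming a circle, which by the very specialisation defining $\EN(\Delta)$ equals $\delta_i\cdot\id_\unit$. Since $F$ is symmetric monoidal it preserves this scalar identity in $\End(\unit)$, so $\mathrm{Tr}(F(\varepsilon)^i)=\delta_i$ for every $i\in\mN$. In particular $\dim F(\bullet)=\delta_0$.

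Next I would establish full faithfulness of the evaluation functor on 2-morphisms. Every object of $\EN_k(\Delta)$ is (a direct summand of) a tensor product of $\bullet$ and $\circ$, and the rigid monoidal structure determines the action on $\circ=\bullet^\ast$ from the action on $\bullet$. Consequently, a monoidal natural isomorphism $\eta:F\Rightarrow G$ is determined by $\eta_\bullet$; the naturality condition with respect to $\varepsilon$ translates exactly into the requirement that $\eta_\bullet$ intertwines $F(\varepsilon)$ and $G(\varepsilon)$. Conversely, any such intertwiner extends uniquely to a monoidal natural isomorphism, because the symmetric monoidal structure and rigidity propagate $\eta_\bullet$ to every object.

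For essential surjectivity, given $(X,f)\in\Arr\ba$ with $\mathrm{Tr}(f^i)=\delta_i$, I would invoke the presentation of the affine oriented Brauer category from \cite[\S 3]{BCNR} as the free rigid symmetric monoidal $k$-linear category on one object equipped with one endomorphism. Its associated graded followed by the specialisation $\mathrm{loop}_i\mapsto\delta_i$ that defines $\EN^0_k(\Delta)$ corresponds exactly to imposing the relations $\mathrm{Tr}(\varepsilon^i)=\delta_i\cdot\id_\unit$. The universal property of this free construction therefore produces an essentially unique symmetric monoidal functor $\EN^0_k(\Delta)\to\ba$ with $\bullet\mapsto X$ and $\varepsilon\mapsto f$, because the trace hypothesis on $f$ ensures all the scalar relations are respected. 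Passing to the additive envelope yields the required functor $\EN_k(\Delta)\to\ba$, and distinct choices of isomorphic $(X,f)$ give isomorphic functors by the previous step.

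The main obstacle is the verification that the presentation of $\EN^0_k(\Delta)$ as an associated graded of the affine oriented Brauer category indeed yields the claimed universal property; that is, that no hidden relations creep in and that the scalar relations $\mathrm{loop}_i=\delta_i$ together with the ambient diagrammatic relations of the affine oriented Brauer category exhaust all identities in $\EN^0_k(\Delta)$. This is precisely the content of \cite[Corollary~3.6]{BCNR}, so the plan is to cite that result for the core universal property and supply the routine rigid-symmetric-monoidal bookkeeping above.
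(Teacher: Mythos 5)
Your proposal is correct and follows essentially the same route as the paper, which simply attributes the statement to \cite[Corollary~3.6]{BCNR}; your additional verifications (that traces of powers of $\varepsilon$ are the specialised loops, hence preserved by symmetric monoidal functors, and the standard bookkeeping for natural transformations and the passage to the additive envelope) are exactly the routine content the paper leaves implicit. No gaps.
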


\subsection{One object, no braiding}\label{CatCSV}
We review the ($k$-linear version of the) universal (unbraided) rigid monoidal category from \cite{CSV}. Let $\Seq^0_k$ be the strict rigid monoidal category over $k$ where objects are words in the alphabet $\{X_i\,|\, i\in\mZ\}$. The morphism space between two words is spanned by all pairings which can be written diagrammatically without crossings and where each letter can be paired with itself on the other line; on the lower line $X_i$ can only be paired with some $X_{i+1}$ appearing to its left or $X_{i-1}$ on its right. On the upper line, $X_i$ can only be paired to $X_{i+1}$ appearing on its right or $X_{i-1}$ on its left. Composition and tensor product is again defined via concatenation and juxtaposition. We denote by $\Seq_k$ the additive envelope of $\Seq_k^0$.
Consider a morphism 
$$\iota:=(\co_{X_0},X_0\otimes\ev_{X_1}\otimes X_1):\;\unit \oplus X_0\otimes X_2\otimes X_1\otimes X_1\to X_0\otimes X_1.$$
The following lemma follows from the diagrammatic calculus.
\begin{lemma}\label{Lemiota}
The morphism $\iota$ is a monomorphism in $\Seq_k$.
\end{lemma}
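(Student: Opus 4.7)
The plan is to verify that $\iota$ is a monomorphism by a direct analysis in the diagrammatic calculus of $\Seq_k^0$, and then extend to the additive envelope $\Seq_k$. Concretely, for every word $W$ I would show that the induced $k$-linear map
\[
\Hom_{\Seq_k}(W,\unit)\,\oplus\,\Hom_{\Seq_k}(W, X_0\otimes X_2\otimes X_1\otimes X_1)\;\longrightarrow\;\Hom_{\Seq_k}(W, X_0\otimes X_1),
\]
sending $(f_1,f_2)$ to $\co_{X_0}\circ f_1 + (X_0\otimes\ev_{X_1}\otimes X_1)\circ f_2$, is injective. Since each Hom-space is spanned by a basis of admissible non-crossing pairings, it suffices to inspect what happens to basis diagrams under post-composition.

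The key structural observation concerns the upper-line word $X_0 X_2 X_1 X_1$: the middle $X_2$ at position $2$ cannot be paired with any other letter on that upper line, since the admissibility rule would demand an $X_1$ immediately to its left or an $X_3$ to its right, neither of which appears. A short non-crossing argument — any arc from position $1$ to position $3$ or position $4$ would enclose position $2$ and prevent it from reaching a partner in $W$ — then forces that each of the four upper-line letters of $X_0 X_2 X_1 X_1$ is paired with a letter of $W$ in any basis representative of $\Hom_{\Seq_k}(W, X_0 X_2 X_1 X_1)$; no upper-line pairings among these four letters are possible.

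With this in hand, the two components of $\iota$ can be separated by the boundary behaviour of the composite. For any basis pairing $D_1$ of $\Hom_{\Seq_k}(W,\unit)$, the composite $\co_{X_0}\circ D_1$ simply appends an outermost top cup joining $X_0$ and $X_1$ of the target, so the top $X_0$ and top $X_1$ are paired to each other. For a basis pairing $D_2$ of $\Hom_{\Seq_k}(W, X_0 X_2 X_1 X_1)$, the observation above forces the top $X_0$ of the final target to be connected (in the composite) to an $X_0$-letter of $W$ and the top $X_1$ to an $X_1$-letter of $W$; in particular, the two top letters of the target are not paired to each other. The images of the two components therefore span linearly independent subsets of the basis of $\Hom_{\Seq_k}(W, X_0 X_1)$, so the relation $\co_{X_0}\circ f_1 + (X_0\otimes\ev_{X_1}\otimes X_1)\circ f_2 = 0$ splits into $\co_{X_0}\circ f_1 = 0$ and $(X_0\otimes\ev_{X_1}\otimes X_1)\circ f_2=0$ separately.

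The equation $\co_{X_0}\circ f_1 = 0$ yields $f_1 = 0$ directly, since $D_1\mapsto \co_{X_0}\circ D_1$ visibly injects basis pairings. The main obstacle will be the second equation: different basis pairings $D_2$ could a priori produce the same composite pairing, because the $\ev$-cap identifies the $W$-partners of positions $2$ and $3$ into a single arc in the composite. I would resolve this by using the non-crossing constraints, together with the distinguished strands coming from positions $1$ and $4$ of the intermediate, to show that from the composite pairing one can unambiguously recover which $W$-letters were attached to positions $2$ and $3$ in $D_2$. This reconstruction step is the delicate point of the proof; once it is in place, injectivity of $(X_0\otimes\ev_{X_1}\otimes X_1)\circ-$ on basis pairings follows, and the passage from $\Seq_k^0$ to the additive envelope $\Seq_k$ is routine.
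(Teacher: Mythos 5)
Your overall strategy is the natural one, and the first half of it is sound: reducing to injectivity of post-composition on $\Hom(W,-)$ for each word $W$, the observation that all four letters of $X_0\otimes X_2\otimes X_1\otimes X_1$ must be matched into $W$ (the $X_2$ can pair with nothing on the upper line, and planarity then blocks the remaining upper-line pairings), the resulting separation of the two components according to whether the two letters of the target $X_0\otimes X_1$ are paired with each other, and the injectivity of $\co_{X_0}\circ -$. The paper offers no argument for this lemma beyond the phrase ``follows from the diagrammatic calculus'', so there is no official proof to compare the details against.

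The step you yourself flag as ``the delicate point'' is, however, a genuine gap, and the reconstruction you propose fails. Take $W=X_0X_2X_1X_2X_1X_1$. The space $\Hom(W,X_0X_2X_1X_1)$ has exactly two basis diagrams $D$ and $D'$: both send the top $X_0$ to position $1$ of $W$ and the last top $X_1$ to position $6$; $D$ matches the top $X_2,X_1$ (positions $2,3$ of the target) to positions $2,3$ of $W$ and closes positions $4,5$ of $W$ by a lower cup, while $D'$ matches them to positions $4,5$ and closes positions $2,3$ by a lower cup. Composing either with $X_0\otimes\ev_{X_1}\otimes X_1$ yields the same pairing (top $X_0$ to position $1$, top $X_1$ to position $6$, and lower cups $2$--$3$ and $4$--$5$): the two candidate $X_2$--$X_1$ cups sit side by side, un-nested, in the region between the two through-strands, and nothing in the composite records which of them was produced by the $\ev$-cap. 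Hence $(X_0\otimes\ev_{X_1}\otimes X_1)\circ(D-D')=0$ with $D\neq D'$, so the second component is not injective on basis diagrams and your final step cannot be completed as described.

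Worse, since $\Hom(W,\unit)=0$ for this $W$ (the initial $X_0$ cannot be capped by any lower cup), the element $(0,D-D')$ lies in the kernel of $\iota\circ-$; indeed the target $\Hom(W,X_0\otimes X_1)$ is one-dimensional while the source is two-dimensional. So under the description of $\Seq_k^0$ given in the appendix this is not merely a hole in your argument but an apparent counterexample to the lemma itself. Before investing further effort, you should resolve this tension: either the hom-spaces of $\Seq_k^0$ carry constraints beyond ``non-crossing pairings subject to the stated index rules'' (in which case your proof must identify and use the constraint that excludes one of $D,D'$ or makes their composites differ), or the statement of the lemma needs to be revisited with the author.
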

The following lemma is proved as \cite[Proposition~6]{CSV}.
\begin{lemma}\label{UniSeq}
Let $\ba$ be a rigid monoidal category over $K$. Evaluation at $X_0$ yields an equivalence between $[\Seq_k,\ba]^{\otimes}$ and the groupoid corresponding to all objects in $\ba$.\end{lemma}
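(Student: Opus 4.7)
The plan is to prove this by explicitly constructing the inverse functor from the groupoid of objects to $[\Seq_k,\ba]^\otimes$, and then verifying this together with evaluation at $X_0$ gives an equivalence. This mirrors the argument of \cite[Proposition~6]{CSV}, using that $\Seq_k$ is diagrammatically the free rigid monoidal category on one object.

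First I would construct, for each $A\in\ba$, a monoidal functor $F_A:\Seq_k\to\ba$. For each $A$, choose a sequence of left and right duals: inductively, fix a left dual ${}^\ast(X_{i-1})$ and a right dual $(X_{i-1})^\ast$ with chosen (co)evaluations, and set
\[
F_A(X_0)=A,\quad F_A(X_1)=A^\ast,\quad F_A(X_{-1})={}^\ast A,\quad F_A(X_2)=(A^\ast)^\ast,\ldots
\]
so that $F_A(X_{i+1})$ is a right dual of $F_A(X_i)$ when $i$ is even and a left dual when $i$ is odd (matching the rules for $\iota$ and its variants in $\Seq_k$). Send the generating $\co_{X_i}$ and $\ev_{X_i}$ to the corresponding coevaluation and evaluation in $\ba$, and extend monoidally to words and formal sums. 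The point is that by the diagrammatic description of $\Seq_k^0$, every relation between morphisms is a consequence of the snake identities applied to adjacent pairs $(X_i,X_{i+1})$, and these are built into the definition of duals in $\ba$. Hence $F_A$ is well defined.

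Next I would verify essential surjectivity: evaluation at $X_0$ sends $F_A$ to $A$. For fullness and faithfulness of evaluation, I would argue that a monoidal natural isomorphism $\eta:F\Rightarrow G$ between monoidal functors $F,G:\Seq_k\to\ba$ is determined by its component $\eta_{X_0}$. Indeed, $F(X_1)$ is a right dual of $F(X_0)$ with structure morphisms $F(\co_{X_0})$, $F(\ev_{X_0})$, and likewise for $G$. Monoidality of $\eta$ applied to these generators forces $\eta_{X_1}$ to be the unique isomorphism between right duals compatible with $\eta_{X_0}$, namely $\eta_{X_1}=((\eta_{X_0})^{-1})^\vee$ in the appropriate sense. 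By induction on $|i|$ every $\eta_{X_i}$ is forced by $\eta_{X_0}$, and the values on words are then forced by tensoriality. Conversely, given any iso $f:F(X_0)\xrightarrow{\sim}G(X_0)$, one defines $\eta_{X_i}$ inductively by this uniqueness-of-duals prescription, and extends monoidally to all words; the snake relations guarantee the resulting data is a monoidal natural isomorphism. This establishes that evaluation at $X_0$ is fully faithful.

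The main obstacle is the verification that the inductively defined $\eta_{X_i}$ assemble into a well-defined monoidal natural transformation, i.e.\ naturality on every generating coevaluation and evaluation in $\Seq_k$ and compatibility with the monoidal structure. This reduces, by the diagrammatic presentation of $\Seq_k$, to checking one square for each pair $(X_i,X_{i+1})$, which is exactly the statement that the iso between two right (or left) duals induced by an iso of their common dualisand intertwines the respective coevaluation/evaluation data. This is the standard \emph{uniqueness of duals} calculation in a monoidal category and causes no difficulty beyond bookkeeping signs of $i$. Once this is in place, both the construction $A\mapsto F_A$ and the fully faithful evaluation functor are assembled, and together they exhibit the claimed equivalence of groupoids.
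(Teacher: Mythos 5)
Your proposal is correct and follows the same route as the paper, which simply defers to \cite[Proposition~6]{CSV}: one builds $F_A$ by choosing iterated duals of $A$, checks well-definedness via the snake relations (this is the substance of the freeness theorem of {\it loc.\ cit.}, including the fact that concatenation of the admissible non-crossing pairings never produces closed loops, so no loop parameter arises), and obtains full faithfulness from uniqueness of duals. One slip to fix: the relation between $X_i$ and $X_{i+1}$ in $\Seq_k$ is uniform in $i$ (by the pairing rules, $X_{i+1}$ is always the same-sided dual of $X_i$), so your parenthetical claim that $F_A(X_{i+1})$ is a right or left dual of $F_A(X_i)$ according to the parity of $i$ is incorrect and contradicts your own displayed assignment $F_A(X_2)=(A^\ast)^\ast$, which is the right one; taken literally the parity rule would force $F_A(X_2)\cong A$, which cannot underlie a functor out of $\Seq_k$ since $\Seq_k(X_0,X_2)=0$.
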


\subsection*{Acknowledgements}

The author thanks Richard Garner and Amnon Neeman for interesting discussions, Luca Barbieri Viale for introducing him to \cite{BHP} and Bruno Kahn for pointing out \cite{OS}. Some of the results in the current paper overlap with results such as \cite[1.14, 1.15, 3.8]{BHP}. The research was partially supported by ARC grant DP210100251.


\end{document}